\documentclass[11pt]{amsart}
\pagestyle{plain}
\setcounter{secnumdepth}{5}
\setcounter{tocdepth}{1}

\usepackage{hyperref}
\hypersetup{linktocpage = true, colorlinks = true, linkcolor = blue, citecolor= red, urlcolor = green}

\usepackage{url}
\usepackage{graphicx, color, bm}
\usepackage{array,float}
\usepackage{amsmath,amsthm}
\usepackage{amssymb, amsfonts, verbatim, subfigure}

\usepackage{mathpazo}
\usepackage{mathrsfs}
\usepackage[mathscr,mathcal]{euscript}

\newcommand{\pd}{{\partial}}

\newcommand{\wt}{\widetilde}
\newcommand{\id}{\text{id}}

\newcommand{\be}{\begin{equation}}
	\newcommand{\ee}{\end{equation}}
\newcommand{\beqn}{\begin{equation}}
	\newcommand{\eeqn}{\end{equation}}
\newcommand{\bp}{\begin{pmatrix}}
	\newcommand{\ep}{\end{pmatrix}}
\newcommand{\bsp}{\left(\begin{smallmatrix}}
	\newcommand{\esp}{\end{smallmatrix}\right)}

\newcommand*\diff{\mathop{}\!\mathrm{d}}

\newcommand{\R}{{\mathbf R}}

\newcommand{\C}{{\mathbf C}}
\newcommand{\Z}{{\mathbf Z}}

\newcommand{\CA}{{\mathcal A}}

\newcommand{\CK}{{\mathcal K}}
\newcommand{\CM}{{\mathcal M}}
\newcommand{\CN}{{\mathcal N}}
\newcommand{\CO}{{\mathcal O}}

\newcommand{\CV}{{\mathcal V}}

\newcommand{\cN}{\CN}

\newcommand{\fsl}{\mathfrak{sl}}
\newcommand{\fgl}{\mathfrak{gl}}
\newcommand{\fg}{\mathfrak{g}}

\newcommand{\norm}[1]{{{:\!{#1}\!:}}}

\DeclareMathOperator{\SVir}{SVir}
\newcommand{\rav}{\mathbb{R}\text{av}}

\numberwithin{equation}{section}
\numberwithin{figure}{section}
\numberwithin{table}{section}

\DeclareMathOperator{\End}{End}
\DeclareMathOperator{\Sym}{Sym}

\DeclareMathOperator{\Spec}{Spec}

\DeclareMathOperator{\Tr}{Tr}

\newcommand{\lie}{\mathfrak}
\newcommand{\op}{\operatorname}


\usepackage{todonotes}

\newtheoremstyle{thm}
{7pt}
{7pt}
{\itshape}
{}
{\bf}
{.}
{5pt}
{\thmnumber{#2 }\thmname{#1}\thmnote{ (#3)}}

\newtheoremstyle{def}
{7pt}
{10pt}
{\itshape}
{}
{\bf}
{.}
{5pt}
{\thmnumber{#2} \thmname{#1}\thmnote{ (#3)}}

\newtheoremstyle{rem}
{4pt}
{10pt}
{}
{}
{\itshape}
{:}
{3pt}
{}

\newtheoremstyle{texttheorem}
{8pt}
{8pt}
{\itshape}
{}
{\bf}
{. \hspace{5pt}}
{3pt}
{}

\theoremstyle{thm}

\newtheorem*{theorem*}{Theorem}
\newtheorem*{lemma*}{Lemma}
\newtheorem*{corollary*}{Corollary}
\newtheorem*{proposition*}{Proposition}
\newtheorem*{definition*}{Definition}

\newtheorem{theorem}{Theorem}[subsection]
\newtheorem{thm-def}{Theorem/Definition}[theorem]
\newtheorem{proposition}[theorem]{Proposition}

\newtheorem*{question*}{Question}
\newtheorem{lemma}[theorem]{Lemma}

\newtheorem{corollary}[theorem]{Corollary}

\numberwithin{equation}{subsection}

\theoremstyle{def}

\theoremstyle{rem}


\usepackage{stmaryrd}
\parskip=10pt
\date{}

\title{Higgs and Coulomb branches from Superconformal Raviolo Vertex Algebras}
\author{Niklas Garner}
\address{University of Washington, Seattle}
\email{nkgarner@uw.edu}

\author{Surya Raghavendran}
\address{Yale University}
\email{surya.raghavendran@yale.edu}

\author{Brian R. Williams}
\address{Boston University}
\email{bwill22@bu.edu}

\begin{document}

\begin{abstract}
	We propose a method for extracting the Higgs and Coulomb branches of a three-dimensional $\mathcal{N}=4$ quantum field theory from the algebra of local operators in its holomorphic-topological twist using the formalism of raviolo vertex algebras. Our construction parallels that of the chiral ring and twisted chiral ring of an $N=2$ superconformal vertex operator algebra.
\end{abstract}

\maketitle
\tableofcontents

Higgs and Coulomb branches of vacua in three-dimensional $\CN=4$ quantum field theories have been objects of intense physical and mathematical interest. The Higgs branch $\CM_H$ of an ordinary three-dimensional $\CN=4$ theory of gauged hypermultiplets receives no quantum corrections and the classical answer suffices, cf. \cite{APS}: it can be identified with a hyperk\"{a}hler reduction of the hypermultiplet target space \cite{HKLR}. The Coulomb branch $\CM_C$ of these theories, on the other hand, is typically quite difficult to analyze due to both perturbative and non-perturbative corrections. Work of Bravermann-Finkelberg-Nakajima \cite{Nak, BFN}, inspired by many earlier physical analyses, provided the first mathematically precise definition of the Coulomb branch for a large class of these gauge theories in terms of the equivariant cohomology of a certain moduli space built from the affine Grassmannian for (the complexification of) the gauge group. Alternative constructions in terms of Hilbert schemes of hypertoric varieties, along with investigations of the hyperk\"{a}hler metrics on these spaces, have since appeared \cite{bielawski2023hypertoric}.

Not unlike 2d mirror symmetry, there is a physical duality known as 3d mirror symmetry \cite{IS} that relates two three-dimensional $\CN=4$ theories in a fashion that exchanges these two branches of vacua. This physical duality has far-reaching mathematical connections to symplectic duality \cite{BPW, BLPW}, cf. \cite{BDGH}, and the geometric Langlands correspondence \cite{GaiottoWitten, CreutzigGaiotto}, to name a few. See e.g. the recent expository article \cite{WebsterYoo} for a sampling of these mathematical connections.

These moduli spaces can be extracted from certain topological twists of the corresponding three-dimensional $\CN=4$ theory. More precisely, the algebras of local operators in these twisted theories describe the rings of holomorphic functions (in a choice of complex structure dictated by the twist) on these hyperk\"{a}hler moduli spaces; the Poisson structure arises physically as a secondary product \cite{descent}. The topological $A$ twist, a dimensional reduction of the 4d Donaldson-Witten twist \cite{WittenTQFT}, reproduces $\C[\CM_C]$. Similarly, the topological $B$ twist, also called the Rozansky-Witten twist \cite{RW} but simulteneously studied by Blau-Thompson in the case of pure gauge theory \cite{BT}, reproduces $\C[\CM_H]$. 

It was proposed by Costello-Gaiotto \cite{CostelloGaiotto} that many aspects of these twisted theories, including the Higgs and Coulomb branches \cite{CostelloCreutzigGaiotto}, could be extracted from certain vertex operator algebras of boundary operators. These play a role analogous to that of chiral Wess-Zumino-Witten theory in Chern-Simons theory \cite{WittenJones}.

Many aspects of the topological $A$ and $B$ twists of a three-dimensional $\CN=4$ theory can be accessed more easily by first passing to an intermediate holomorphic-topological ($HT$) twist available to any three-dimensional $\CN\geq2$ theory, see e.g. \cite{ElliottYoo, ESWtax, Butson2, twistedN=4}. Of particular importance for us is that the algebra of local operators in the $HT$ twist contains information about both Higgs and Coulomb branches. Local operators in such an $HT$-twisted theory have the structure of a 1-shifted Poisson vertex algebra \cite{OhYagi}; see also \cite{CostelloDimofteGaiotto, Zeng}. In \cite{GarnerWilliams} an equivalent model for these local operators was developed in terms of the notion of a \textit{raviolo vertex algebra}.
This reformulation offers many parallels with the theory of ordinary vertex algebras and allows for manipulations of the algebras of local operators in three-dimensional holomorphic-topological QFT in much the same way as holomorphic field theories in two dimensions.

A portion of the $\CN=4$ supersymmetry remains after taking the $HT$ twist and, among many other things, there are nilpotent symmetries that can be used to deform the $HT$ twist to the fully topological $A$ and $B$ twists. At the level of local operators, the deformation to the $A$ or $B$ twist is realized by introducing a differential, or perhaps deforming the existing one. We argue in our companion paper \cite{HTenhance} that the $HT$ twists of a wide class of Lagrangian%
\footnote{The symmetry enhancements we find should apply more generally; we restrict our attention to Lagrangian theories to make the discussion explicit.} %
three-dimensional $\CN \geq 2$ supersymmetric field theories actually exhibit an enhancement of their global symmetries, including these residual supersymmetries.

Explicitly, much of the $\lie{osp}(\CN|4)$ symmetry of a three-dimensional superconformal%
\footnote{We find that the $HT$ twist exhibits the following symmetry enhancements regardless of whether the ultraviolet theory is superconformal. Roughly speaking, the theories we consider are expected to flow in the IR to an superconformal theory; as the $HT$-twisted theory is an RG invariant (up to quasi-isomorphism, cf. Section 3.7 of \cite{CostelloDimofteGaiotto}), it should exhibit the symmetry enhancements of this IR SCFT.} %
field theory is not compatible with the $HT$-twist and yet other symmetries become redundant.
It is the cohomology of $\lie{osp}(\CN|4)$ (with respect to the twisting supercharge $Q_{HT}$) that naturally acts on the $HT$ twist of such a theory.
This cohomology can be identified with $\lie{osp}(N|2)$ with $N = \CN-2$, so the $HT$ twist of such a theories admits a natural action by this Lie superalgebra.
For $\cN=2$, the $HT$-twisted symmetry is $\lie{sp}(2) \cong \lie{sl}(2)$, identified with the algebra generated the vector fields $\pd_z$, $z \pd_z$, and $z^2 \pd_z$, and we find this enhances to all holomorphic vector fields $\op{Vect}^{hol}(\C)$ in the examples we consider in \cite{HTenhance}.
For $\cN=4$, the symmetry is $\lie{osp}(2|2)$ and we find an enhancement to the positive part of the $N=2$ Virasoro algebra.

More generally, we find that for a family of supersymmetric Chern--Simons theories with $\cN = 2,3,4,5$, and $6$ supersymmetry this symmetry of their $HT$ twists enhances to the positive part of the $N=0,1,2,3$ and big%
\footnote{The big $N=4$ Virasoro algebra should not be confused with the so-called large $N=4$ algebra. The big $N=4$ algebra arises from the Lie algebra $K_4' = [K_4, K_4]$ in Kac's classification of simple superconformal algebras, see e.g. \cite{KacSCalgs}, whereas the latter is a VOA that is not induced from a Lie algebra, but does arise from Drinfeld-Sokolov reduction of an affine $D(2,1;\alpha)$ algebra \cite{KacWakimoto}. The large $N=4$ algebra can be embedded into the (VOA induced by the) big $N=4$ algebra \cite{GS}.} %
$N=4$ Virasoro algebras, respectively.
In our examples with $\CN=8$ supersymmetry, we find an enhancement from $\lie{osp}(6|2)$ to the exceptional Lie superalgebra $E(1|6)$, which can be identified as the positive part of the exceptional superconformal algebra $CK_6$ discovered by Cheng and Kac \cite{CK}.
The original evidence that $E(1|6)$ should appear as an enhanced symmetry of the $HT$ twist of a 3d superconformal theory comes from its appearance as an enhanced symmetry in a twist of the $AdS_4$ geometry obtained by backreacting $M2$ branes \cite{twistedgraviton}.

Importantly, the raviolo vertex algebras of local operators in the $HT$ twist of a theory with $\CN \geq 2$ supersymmetry should exhibit the above symmetry enhancement: such a raviolo vertex algebra should contain the raviolo analogs of the above sypersymmetric extensions of the Virasoro algebra. There is a natural classification problem for raviolo superconformal algebras analogous to the one initiated by Ramond and Schwarz \cite{RS} for ordinary superconformal algebras.
We will not attempt to provide such a classification here; see e.g. Section 5.10 of \cite{Kac} for a solution to the ordinary classification problem.

In this paper, we exploit one of the parallels between raviolo vertex algebras and ordinary vertex algebras. We show that there are (at least) two natural graded commutative algebras associated to any $N=2$ superconformal raviolo vertex algebra $\CV$ satifying a certain ``BPS bound.'' (See Section \ref{sec:HCprimary} for more details about this BPS bound.) These algebras arise in much the same way as the chiral ring construction in 2d $\CN=(2,2)$ superconformal field theory (SCFT), cf. \cite{LVWchiral}. We call the resulting algebras $A_H[\CV]$ and $A_C[\CV]$ the Higgs and Coulomb branch chiral rings of $\CV$. In fact, as we show in Theorem \ref{thm:poissonHB}, these algebras possess yet more structure: they each have a natural 2-shifted Poisson bracket. This 2-shifted Poisson bracket is the raviolo analog of the Gerstenhaber bracket arising in the chiral ring; see e.g. \cite{WittenZwiebach, PenkavaSchwarz, WuZhu, LianZuckerman}. From the perspective of deforming an $HT$-twisted $\CN=4$ theory to its toplogical $A$ and $B$ twists, this bracket arises via the mechanism of topological descent described in Section 2.4 of \cite{twistedN=4}.

We view the 2-shifted Poisson algebras $A_H[\CV]$, $A_C[\CV]$ as the rings of functions on affine 2-shifted Poisson schemes $\CM_H[\CV]$ and $\CM_C[\CV]$ that we call the Higgs and Coulomb branches of $\CV$. When the $N=2$ superconformal raviolo vertex algebra $\CV$ models local operators in the $HT$ twist of a three-dimensional $\CN=4$ QFT, we expect that the Higgs and Coulomb branches defined by $\CV$ precisely match those of the underlying three-dimensional $\CN=4$ theory. We verify this expectation for two important examples: a free hypermultiplet, whose Higgs branch is identified with $\C^2[-1]$ and whose Coulomb branch is a point, and a free vector multiplet, whose Higgs branch is a (fat) point and whose Coulomb branch is $T^*[2]\C^\times$.

The remainder of the paper is organized as follows. Section \ref{sec:RVAs} is a review of the raviolo vertex algebras introduced in \cite{GarnerWilliams}. This section contains no new results and is meant to orient the reader.

In Section \ref{sec:superconf} we move to $N=2$ superconformal raviolo vertex algebras. In Section \ref{sec:ravSCA} we describe the underlying symmetry algebra, i.e. the raviolo $N=2$ superconformal algebra. We introduce the notion of Higgs and Coulomb branch primary operators in Section \ref{sec:HCprimary}, which are the analogs of chiral and twisted chiral primary operators in 2d $\CN=(2,2)$ SCFT, as well as the BPS bound. We then describe Higgs (and Coulomb) branch flavor symmetries in Section \ref{sec:HCsymmetries}, which lead to Hamiltonian symmetries of the Higgs and Coulomb branches (Corollary \ref{cor:flavorsymmerty}). In Section \ref{sec:SCexamples} we introduce a series of examples, including a free $\CN=4$ hypermultiplet and a free $\CN=4$ vector multiplet. We also present a somewhat unexpected example: the local operators in (the holomorphic-topological twist of) a free chiral multiplet (of $R$-charge $\tfrac{1}{2}$) has an $N=2$ superconformal structure. Finally, in Section \ref{sec:moresusy} we describe several examples of raviolo superconformal algebras with different amounts of supersymmetry arising in the examples of \cite{HTenhance}.

We introduce Higgs and Coulomb branches for any $N=2$ superconformal raviolo vertex algebra satisfying the BPS bound in Section \ref{sec:HC}. In Section \ref{sec:chiralrings} we prove Theorem \ref{thm:poissonHB} showing they are naturally 2-shifted affine Poisson schemes. We introduce the notion of an $N=2$ superconformal raviolo vertex algebra with superpotential in Section \ref{sec:superconfsuperpot}, specializing the construction in Section 4.5 of \cite{GarnerWilliams} to the $N=2$ superconformal setting. Of note is the example of perturbative $\CN=4$ gauge theory described in Section \ref{sec:pertgaugetheory}, where we construct from an $N=2$ superconformal raviolo vertex algebra (possibly with superpotential) having a $\fg$ Higgs branch flavor symmetry a new $N=2$ superconformal raviolo vertex algebra with superpotential whose Higgs branch is the (derived) algebraic symplectic reduction $\CM_H[\CV]/\!\!\!/\fg$ (Theorem \ref{thm:HBpertreduction}). Finally, in Section \ref{sec:toptwisting} we describe a second way to extract two 2-shifted Poisson algebras from any $N=2$ superconformal raviolo vertex algebra regardless of the BPS bound by a cohomological procedure reminiscent of topological twisting, culminating with Theorem \ref{thm:twistingHB}. We show that when the BPS bound is satisfied, there is always a natural map out of the Higgs/Coulomb branch chiral rings (Proposition \ref{prop:naturalmap}) and, moreover, that this map is an isomorphism of 2-shifted Poisson algebras for the examples in Section \ref{sec:SCexamples}.

\subsection*{Acknowledgements}
We would like to thank Zhengping Gui and Natalie M. Paquette for useful conversations during the development of these ideas. 
NG is supported by funds from the Department of Physics and the College of Arts \& Sciences at the University of Washington, Seattle.
SR was supported by Yale University and the Perimeter Institute for Theoretical Physics. Research at Perimeter Institute is supported in part by the Government of Canada, through the Department of Innovation, Science and Economic Development Canada, and by the Province of Ontario, through the Ministry of Colleges and Universities.

\section{Aspects of Raviolo Vertex Algebras}
\label{sec:RVAs}

In this section we present the definition of a raviolo vertex algebra as well as some elementary properties thereof. Our discussion will be expository in nature and this section contains no results beyond those established in~\cite{GarnerWilliams}.

\subsection{The raviolo}
\label{sec:rav}
We start by describing a bit of the geometry underlying raviolo vertex algebras. See Section 1 of \cite{GarnerWilliams} for more details.

An ordinary vertex algebra is built from the geometric space known as the formal punctured disk $D^\times =\Spec \C(\!(z)\!)$, thought of as an infinitesimal, algebraic version of the configuration space of two points on $\C$ (modulo overall translations); somewhat more precisely, the ring $\C(\!(z)\!)$ models holomorphic functions on this configuration space (modulo translations), i.e. the cohomology of the structure sheaf on the formal punctured disk. For example, the notion of a vertex algebraic field on a vector space $V$ is as a certain $\End(V)$-valued formal distribution on the punctured disk.

The geometric space underlying a raviolo vertex algebra is the \emph{formal raviolo} $\rav$. To construct $\rav$, we start with the formal disk $D = \Spec \C[\![z]\!]$. There is a natural inclusion $D^\times \hookrightarrow D$ coming from viewing any formal Taylor series as a formal Laurent series. The formal raviolo is then defined as the scheme-theoretic pushout of two formal disks over a shared formal punctured disk:
\be
	\rav := D \cup_{D^\times} D
\ee
We note that, although $\rav$ is not affine, it can be realized as the spectrum of a commutative dg algebra $\CA$ described in Section 1 of \cite{GarnerWilliams}. In the same way that the formal punctured disk is an infinitesimal, algebraic avatar of the configuration space of two points on $\C$, the formal raviolo is an infinitesimal, algebraic model for configuration space of two points on $\C \times \R$ (modulo overall translations). The replacement for the sheaf of holomorphic functions in this context is the sheaf of functions constant along the leaves of a transverse holomorphic foliation; the cohomology of $\CA$ models the cohomology of this mixed holomorphic-topological analog of the structure sheaf. See Section 1.1 of loc. cit. and references therein for more details. Importantly, this cohomology has support outside of degree $0$. As noted in loc. cit., there is a lot of information lost when passing to cohomology and there should be a natural homotopical enrichment of raviolo vertex algebras coming from working at the level of chains.

The algebraic model for the cohomology of the structure sheaf of $\rav$ used by \cite{GarnerWilliams} is as follows. The degree zero cohomology is isomorphic to holomorphic functions in one variable $\C[z]$. The degree 1 component can be identified with the $\C[z]$-module $\C\langle\Omega^0, \Omega^1, \dots\rangle$, where the action of $z^n$ is given by
\be
z^n \Omega^m = \begin{cases}
	0 & n > m\\
	\Omega^{m-n} & n \leq m
\end{cases}
\ee
and there are no higher cohomology groups. The algebra structure on this ring of functions is as follows: the degree 0 part has its natural ring structure, the multiplication of two degree 1 elements vanishes, and the product of a degree 0 and degree 1 element is given by the above action together with $z^m \Omega^n = \Omega^n z$.
We will call this the ring of polynomial functions on the raviolo and denote it $\CK_{poly}$; this is the raviolo analog of Laurent polynomials $\C[z,z^{-1}]$, where $\Omega^m$ plays the role of $z^{-m-1}$. We denote the completion of this ring that allows for formal Taylor series in $z$ by $\CK$; this is the raviolo analog of formal Laurent series $\C(\!(z)\!)$, we correspondingly call elements of $\CK$ formal raviolo Laurent series. We will also need the completion $\CK_{dist}$ that allows for infinite sums in both degrees; this is the analog of $\C[\![z,z^{-1}]\!]$ and we call elements of $\CK_{dist}$ formal raviolo distributions. Note that $\CK_{dist}$ does not have the structure of an algebra, but it is a $\CK$-module. There is a natural endomorphism of all of these spaces, denoted $\pd_z$, that acts on the generators as
\be
	\pd_z z^n = n z^{n-1} \qquad \pd_z \Omega^m = -(m+1) \Omega^{m+1}
\ee
This is a derivation of the algebras $\CK_{poly}$ and $\CK$. Finally, we note that there is a natural linear map $\diff z:\CK_{poly} \to \C[-1]$ defined by
\be
	\oint_0 \diff z z^n = 0 \qquad \oint_0 \diff z \Omega^n = \delta_{n,0}
\ee
for any $n \geq 0$, extended by linearity. This is the raviolo analog of the residue pairing in complex analysis and serves as an algebraic avatar of integrating over a small 2-sphere centered at $0$, whence the notation. This residue map is inherited by the completions $\CK$ and $\CK_{dist}$.

To simplify notation, for any $\C$-algebra $R$ we denote by $R\langle\!\langle z\rangle\!\rangle$ the ring for $R$-valued formal raviolo Laurent series, e.g. $\CK = \C\langle\!\langle z\rangle\!\rangle$. We will use a superscript to differentiate different copies of $\CK$, e.g. $\CK^z = \C\langle\!\langle z\rangle\!\rangle$ versus $\CK^w = \C\langle\!\langle w\rangle\!\rangle$, and differentiate the corresponding degree $1$ generators with a subscript, e.g. $\Omega^m_z$ versus $\Omega^m_w$.

We can similarly consider multivariate versions of the spaces of raviolo functions and distributions. We denote by $\CK^{z,w}_{dist}$ the space of bivariate formal ravioli distributions. This space is concentrated in degrees $0$, $1$, and $2$; degree $0$ is identified with bivariant formal Taylor series $\C[\![z,w]\!]$; degree $1$ is identified with formal series of elements of the form $z^n \Omega^m_w$ and $w^m \Omega^n$; and degree $2$ is identified with formal series elements of the form $\Omega^n_z \Omega^m_w = - \Omega^m_w \Omega^n_z$. As with formal series, there are two natural inclusions
\be
	\C\langle\!\langle z\rangle\!\rangle\langle\!\langle w\rangle\!\rangle \hookrightarrow \CK_{dist}^{z,w} \hookleftarrow \C\langle\!\langle w\rangle\!\rangle\langle\!\langle z\rangle\!\rangle
\ee
serving as the raviolo analog of the inclusions of $\C(\!(z)\!)(\!(w)\!)$ and $\C(\!(w)\!)(\!(z)\!)$ into $\C[\![z^{\pm1}, w^{\pm1}]\!]$.

The last ring we will need is the raviolo analog of $\C[\![z,w]\!][z^{-1}, w^{-1}, (z-w)^{-1}]$, denoted $\CK_{z,w,z-w}$ in \cite{GarnerWilliams}. This serves as an algebraic avatar of the ring of functions on the configuration space of two points. The degree zero part of $\CK_{z,w,z-w}$ is simply bivariate Taylor series $\C[\![z,w]\!]$. As a module for the degree zero part, $\CK_{z,w,z-w}$  is generated by three towers of degree $1$ elements $\Omega^m_x$, for $x = z$, $w$, $z-w$, with the expected action by degree 0 elements, e.g. $x^n \Omega^m_x = \Omega^{m-n}_x$ if $n \leq m$ and $0$ otherwise. There is also a degree $2$ relation
\be
	\Omega^0_{z-w} \Omega^0_z + \Omega^0_w \Omega^0_{z-w} + \Omega^0_z \Omega^0_w = 0
\ee
together with all relations obtained from this by acting with $z, \pd_z$ and $w,\pd_w$. The fundamental property of $\CK_{z,w,z-w}$ is captured by the following lemma.

\begin{lemma}[Lemma 1.4.1 of \cite{GarnerWilliams}]
\label{lem:expand}
	There are graded algebra maps from $\CK_{z,w,z-w}$ to $\C\langle\!\langle z\rangle\!\rangle\langle\!\langle w\rangle\!\rangle$, $\C\langle\!\langle w\rangle\!\rangle\langle\!\langle z\rangle\!\rangle$, and $\C\langle\!\langle w\rangle\!\rangle\langle\!\langle z-w\rangle\!\rangle$.
\end{lemma}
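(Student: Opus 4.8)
\textit{Proof proposal.} The plan is to build each of the three maps as the raviolo analog of expanding a rational function with poles along $z=0$, $w=0$, and $z=w$ into a formal Laurent series adapted to a particular ``region.'' For the map $\CK_{z,w,z-w}\to\C\langle\!\langle z\rangle\!\rangle\langle\!\langle w\rangle\!\rangle$ I would take the evident inclusion $\C[\![z,w]\!]\hookrightarrow\C[\![z]\!][\![w]\!]$ on the degree-$0$ part, send the two ``local'' towers identically, $\Omega^m_z\mapsto\Omega^m_z$ and $\Omega^m_w\mapsto\Omega^m_w$, and send the ``remote'' tower to its expansion in the outer variable $w$,
\[
	\Omega^m_{z-w}\ \longmapsto\ \sum_{k\geq0}\binom{m+k}{k}w^k\,\Omega^{m+k}_z ,
\]
the raviolo avatar of $(z-w)^{-m-1}=\sum_{k\geq0}\binom{m+k}{k}w^kz^{-m-1-k}$. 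For the map to $\C\langle\!\langle w\rangle\!\rangle\langle\!\langle z\rangle\!\rangle$ one instead expands in $z$, sending $\Omega^m_{z-w}\mapsto(-1)^{m+1}\sum_{k\geq0}\binom{m+k}{k}z^k\Omega^{m+k}_w$ (the sign bookkeeping $z-w=-(w-z)$), keeping $\Omega^m_z,\Omega^m_w$ identical; and for the map to $\C\langle\!\langle w\rangle\!\rangle\langle\!\langle z-w\rangle\!\rangle$ one keeps the $w$- and $(z-w)$-towers identical and expands the $z$-tower via $z=w+(z-w)$, sending $\Omega^m_z\mapsto\sum_{k\geq0}(-1)^k\binom{m+k}{k}(z-w)^k\Omega^{m+k}_w$.

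Next I would check these formulas define graded algebra maps. That the images lie in the target rings is immediate: each expansion is a genuine formal Taylor series in the outer variable whose coefficients have bounded $\Omega$-degree, hence lies in the appropriate completion, and each map manifestly preserves the cohomological grading. The substance is that the defining relations of $\CK_{z,w,z-w}$ are respected. The $\C[\![z,w]\!]$-module relations among the degree-$1$ generators --- e.g. $(z-w)\Omega^m_{z-w}=\Omega^{m-1}_{z-w}$, read as $(z\cdot{-}\,w\cdot)$ applied to the expansion --- reduce to Pascal's identity $\binom{m+k}{k}=\binom{m+k-1}{k}+\binom{m+k-1}{k-1}$ and reindexing, and are routine. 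For the degree-$2$ relation $\Omega^0_{z-w}\Omega^0_z+\Omega^0_w\Omega^0_{z-w}+\Omega^0_z\Omega^0_w=0$ together with all of its $z,\pd_z,w,\pd_w$-translates, I would first observe that each candidate map intertwines multiplication by $z,w$ (automatic, being a $\C[\![z,w]\!]$-algebra map) and the derivations $\pd_z,\pd_w$; the latter is checked on the algebra generators, where it is again binomial bookkeeping (e.g. $\pd_z\bigl(\sum_k\binom{m+k}{k}w^k\Omega^{m+k}_z\bigr)=-\sum_k(m+k+1)\binom{m+k}{k}w^k\Omega^{m+k+1}_z$, matching the image of $-\,(m+1)\Omega^{m+1}_{z-w}$). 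Consequently every derived relation is the image under such an operator of the base relation, so it suffices to verify the single base relation. Applying the map to it, one uses that in the target $\C\langle\!\langle z\rangle\!\rangle\langle\!\langle w\rangle\!\rangle$ the product of two elements of the $z$-tower vanishes --- so $\Omega^0_{z-w}\Omega^0_z\mapsto\sum_k w^k(\Omega^k_z\Omega^0_z)=0$ --- together with $w^k\Omega^0_w=0$ for $k\geq1$ and the anticommutation $\Omega^0_w\Omega^0_z=-\Omega^0_z\Omega^0_w$, which give $\Omega^0_w\Omega^0_{z-w}\mapsto\Omega^0_w\Omega^0_z=-\Omega^0_z\Omega^0_w$; the three images sum to zero. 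The other two maps are handled by the same computation with the roles of $z$, $w$, $z-w$ permuted.

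I expect the only real obstacle to be the careful sign- and index-bookkeeping in the degree-$2$ check: making sure the anticommutativity $\Omega^n_x\Omega^m_y=-\Omega^m_y\Omega^n_x$ is tracked consistently through the expansions, that the signs $(-1)^{m+1}$ and $(-1)^k$ in the two ``remote'' expansions are the ones forced by the module relations, and that ``all relations obtained by acting with $z,\pd_z,w,\pd_w$'' are genuinely generated from the base relation by operators the maps intertwine, so that the finite check above really does suffice. One could also record, as a uniqueness remark, that each map is forced once one demands it be a $\C[\![z,w]\!]$-algebra homomorphism compatible with $\pd_z$ and $\pd_w$ and acting as the identity on the two towers attached to the coordinates of the target ring; this is optional but explains why the expansion formulas are the natural ones.
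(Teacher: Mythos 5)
Your overall strategy is the same one the paper (following \cite{GarnerWilliams}) has in mind: define each map by the explicit binomial expansion of the ``remote'' tower, use Pascal's identity for the $\C[\![z,w]\!]$-module relations, check compatibility with $\pd_z,\pd_w$ so that the derived degree-$2$ relations reduce to the base one, and verify the base Arnold-type relation directly. However, there is a genuine error: your sign in the second map is wrong, and it is exactly the kind of error your own ``same computation with roles permuted'' step cannot catch. The correct expansion is
\be
	\Omega^m_{z-w}\ \longmapsto\ (-1)^{m}\sum_{k\geq0}\binom{m+k}{k}\,z^k\,\Omega^{m+k}_w ,
\ee
not $(-1)^{m+1}(\cdots)$. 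The module relation $(z-w)\Omega^m_{z-w}=\Omega^{m-1}_{z-w}$ only forces the coefficients to alternate, $c_m=(-1)^m c_0$, and compatibility with $\pd_z,\pd_w$ is likewise insensitive to the overall constant $c_0$; it is the degree-$2$ relation that pins down $c_0=+1$. Indeed, with $\Omega^0_{z-w}\mapsto s\sum_k z^k\Omega^k_w$ one computes in $\C\langle\!\langle w\rangle\!\rangle\langle\!\langle z\rangle\!\rangle$ that $\Omega^0_{z-w}\Omega^0_z\mapsto s\,\Omega^0_w\Omega^0_z=-s\,\Omega^0_z\Omega^0_w$, that $\Omega^0_w\Omega^0_{z-w}\mapsto 0$, and that $\Omega^0_z\Omega^0_w$ is fixed, so the relation maps to $(1-s)\,\Omega^0_z\Omega^0_w$. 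Your choice $s=(-1)^{0+1}=-1$ gives $2\,\Omega^0_z\Omega^0_w\neq0$, so the assignment does not descend to an algebra map; only $s=+1$, i.e.\ $(-1)^m$, works.

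The source of the slip is the two-dimensional heuristic $(z-w)^{-m-1}=(-1)^{m+1}(w-z)^{-m-1}$, which does \emph{not} transport to the raviolo setting: the degree-$1$ classes $\Omega^m$ are residues over a linking $2$-sphere, and exchanging the two points in $\C\times\R$ is implemented by an orientation-preserving rotation of that sphere, so there is no extra orientation sign relative to the naive Laurent picture. Once the sign is corrected, the rest of your argument (including the observation that the derived relations are by definition generated from the base relation by $z,w,\pd_z,\pd_w$, which the maps intertwine) goes through, and the analogous check for the third map with $\Omega^m_z\mapsto\sum_k(-1)^k\binom{m+k}{k}(z-w)^k\Omega^{m+k}_w$ is correct as you wrote it.
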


The graded algebra map $\CK_{z,w,z-w} \to \C\langle\!\langle z\rangle\!\rangle\langle\!\langle w\rangle\!\rangle$ arises from Taylor expanding $\Omega^m_{z-w}$ ``for small $w$'' as
\be
	\Omega^m_{z-w} \mapsto \sum_{n \geq 0} \binom{m+n}{n} w^n \Omega^{n+m}_z
\ee
Similarly, the map $\CK_{z,w,z-w} \to \C\langle\!\langle w\rangle\!\rangle\langle\!\langle z\rangle\!\rangle$ arises from expanding $\Omega^m_{z-w}$ ``for small $z$'' as
\be
	\Omega^m_{z-w} \mapsto (-1)^m\sum_{n \geq 0} \binom{m+n}{n} z^n \Omega^{n+m}_w
\ee
and the map $\CK_{z,w,z-w} \to \C\langle\!\langle w\rangle\!\rangle\langle\!\langle z-w\rangle\!\rangle$ arises from expanding $\Omega^m_z$ ``for small $z-w$'' as
\be
	\Omega^m_{z} \mapsto \sum_{n \geq 0}(-1)^n \binom{m+n}{n} (z-w)^n \Omega^{n+m}_w
\ee

\subsection{Raviolo vertex algebras}
We now move to the structure of a \emph{raviolo vertex algebra}. We will be rather brief; see Section 2 of \cite{GarnerWilliams} for a more detailed treatment.

We start with a graded vector space $\CV$, denoting the subspace of homogeneous elements of degree $r$ by $\CV^r$. Physically, $\CV$ is the vector space of local operators at, say, $0 \in \C \times \R$ in a three-dimensional holomorphic-topological QFT with $r$ being a cohomological grading. We will call this grading \emph{$R$-charge}. We will be interested in a certain class of $\End(\CV)$-valued formal raviolo distributions; we will write such a formal distribution as
\be
	A(z) = \sum_{m < 0} z^{-m-1} A_m + \sum_{m \geq 0} \Omega^m_z A_m
\ee
We say that $A(z)$ is a \emph{raviolo field} or simply a \emph{field} if for any $v \in \CV$ we have $A_m v = 0$ for $m \gg 0$. We say that $A(z)$ is \emph{homogeneous} of degree $|A|$ if $A_m$ is an endomorphism of degree $|A|$ for $m < 0$ and degree $|A|-1$ for degree $m \geq 0$.%
\footnote{The reason for this shift is due to the fact that $\Omega^m_z$ is degree $1$.} %

The vector space $\CV$ of local operators is equipped with a natural degree 0 \emph{translation operator} $\pd: \CV^{r} \to \CV^{r}$, and has a distinguished \emph{vacuum vector} $|0\rangle \in \CV^0$ that is annihilated by $\pd$: $\pd 1 = \pd |0\rangle = 0$. Physically, $\pd$ sends a local operator at $0$ to its holomorphic derivative and the vector $|0\rangle$ corresponding to the trivial/identity local operator $1 \leftrightarrow |0\rangle$.

The action of a given local operator $O \in \CV$ on $\CV$ is encoded in a field $Y(O,z)$ on $\CV$, which we write as
\be
	Y(O,z) = \sum_{m < 0} z^{-m-1} O_{(m)} + \sum_{m \geq0} \Omega^z_m O_{(m)}
\ee
By an abuse of notation we will usually denote $Y(O,z) = O(z)$; the map $Y(-,z): \CV \to \End(\CV)\otimes \CK_{dist}$ will be called the \emph{state-field} or \emph{state-operator correspondence}. The state-operator correspondence is required to be compatible with the gradings on $\CV$ and $\CK_{dist}$, i.e. the $R$-charges of the modes $O_{(n)}$ in the field $O(z)$ agree with those mentioned above. We can use the raviolo residue pairing to express the endomorphisms $O_{(m)}$ via integrals of $O(z)$: for $m \geq 0$ we have
\be
	O_{(m)} = \oint_0 \diff z\, z^m O(z) \qquad O_{(-m-1)} = \oint_0 \diff z\, \Omega^{m}_z O(z)
\ee 

We require the state-operator correspondence satisfies the \emph{vacuum axiom}
\be
	Y(|0\rangle, z) = \id_\CV \qquad Y(O,z) |0\rangle \in \CV[\![z]\!] \qquad Y(O,0)|0\rangle = O\,.
\ee
as well as the \emph{translation axiom}
\be
	[\pd, Y(O,z)] = \pd_z Y(O,z) = \sum_{n \geq 0} z^n \bigg( (n+1)O_{n+1}\bigg) + \Omega_n \bigg(-n\Pi_{n-1}\bigg)
\ee
and $\pd |0\rangle = 0$ to ensure that the state-operator correspondence intertwines the action of $\pd_z$ on $\CK$ and the action of $\pd$ on $\CV$. Finally, we note that there is a notion of \emph{locality} of the above fields on $\CV$: we say two (homogeneous) fields $A(z), B(w)$ are \emph{mutually local} if for every $v \in \CV$ and $\varphi \in \CV^\vee$ (the linear dual of $\CV$) the matrix elements
\be
	\langle\varphi, A(z) B(w) v\rangle \in \C\langle\!\langle z\rangle\!\rangle\langle\!\langle w\rangle\!\rangle
\ee
and
\be
(-1)^{|A||B|}\langle\varphi, B(w) A(z) v\rangle \in \C\langle\!\langle w\rangle\!\rangle\langle\!\langle z\rangle\!\rangle
\ee
are expansions of the same element of $\CK_{z,w,z-w}$, cf. Lemma \ref{lem:expand}, and moreover the coefficient of $\Omega^l_{z-w}$ vanishes for all $l \gg 0$. We assume the \emph{locality axiom} that the fields $Y(O_1, z)$ and $Y(O_2,w)$ are mutually local for any $O_1$, $O_2$. As with vertex algebraic fields, there are many equivalent formulations of mutual locality of two raviolo fields; see Proposition 2.2.2 of \cite{GarnerWilliams}.

A \emph{raviolo vertex algebra} is the data $(\CV, |0\rangle, \pd, Y)$ of a graded vector space $\CV = \bigoplus_{r} \CV^{r}$ together with a vacuum vector $|0\rangle \in \CV^0$, a translation operator $\pd: \CV^r \to \CV^r$, and state-operator correspondence $Y$ compatible with the grading. This data must satisfy the above vacuum axiom, translation axiom, and locality axiom. The notions of subalgebras, ideals, morphisms, and derivations of raviolo vertex algebras are entirely parallel to those of ordinary vertex algebras; see Section 2.3 of \cite{GarnerWilliams} for more details.

We note that there are variants of raviolo vertex algebras that allow for additional gradings on the vector space $\CV$. For example, we say that $\CV$ has a \emph{spin grading} if it is bigraded, with homogeneous components $\CV^{r,(j)}$, such that $|0\rangle$ has bidegree $(r,j) = (0,0)$, $\pd$ is homogeneous of bidegree $(0,1)$, and the state-operator correspondence $Y$ has bidegree $(0,0)$. This spin grading does not contribute signs when manipulating the raviolo vertex algebra. We say $\CV$ has a \emph{super grading} if it has an additional $\Z/2$ grading such that $|0\rangle$, $\pd$, and $Y$ are all even with respect to this grading. This additional $\Z/2$ grading does contribute to the Koszul rule of signs and amounts to working over $\Z$-graded super vector spaces. When there is a super grading, we denote by $|A|$ the totalized grading (mod 2); we call operators/fields with even totalized grading \emph{bosons} and call them \emph{fermions} if their totalized grading is odd.%
\footnote{We will also consider cases where there are half-integral $R$-charges and so it does not make sense to ask for the totalized grading to be even or odd. When this is the case, we use fermionic/bosonic to describe the signs arising in algebraic manipulations.} %
Finally, we note that it is possible to define a raviolo vertex algebra over a graded commutative $\C$-algebra $S$; see Section 2.5 of \cite{GarnerWilliams} for more details.

\subsection{The operator product expansion}
One of the most important properties satisfied by a raviolo vertex algebra is associativity, cf. Theorem 3.3.1 of \cite{GarnerWilliams}. Given two operators $O_1, O_2$ there is an \emph{operator product expansion} (OPE) of the corresponding fields:
\be
\begin{aligned}
	O_1(z) O_2(w) & = \sum_{m < 0} (z-w)^{-m-1} (O_{1,{(m)}} O_2)(w) + \sum_{m \geq 0}\Omega^m_{z-w} (O_{1,(m)} O_2)(w)\\
	& = \norm{O_1(z) O_2(w)} + \sum_{m \geq 0} \Omega^m_{z-w} (O_{1,(m)} O_2)(w)\\
\end{aligned}
\ee
where $(O_{1,(m)} O_2)(w) = Y(O_{1,(m)}O_2,w)$. We define the \emph{normally-ordered product} as
\be
\begin{aligned}
	\norm{O_1(z)O_2(w)} & = \sum_{n \geq 0} (z-w)^m (O_{1,(m)} O_2)(w)\\
	& = O_1(z)_+ O_2(w) + (-1)^{|O_1||O_2|} O_2(w) O_1(z)
\end{aligned}
\ee
where for any formal raviolo distribution
\be
	f(z) = \sum_{m < 0} z^{-m-1} f_m + \sum_{m \geq 0} \Omega^{m} f_m
\ee
we write
\be
	f(z)_+ = \sum_{m < 0} z^{-m-1} f_m\,, \quad f(z)_- = \sum_{m \geq 0} \Omega^m_z f_m
\ee
As for vertex algebras, above the left-hand side of the OPE should be understood as its expansion in small $z-w$. When $O_1(z)$, $O_2(w)$ are fields, the specialization of $\norm{O_1(z) O_2(w)}$ at $w = z$ is itself a field that we denote $\norm{O_1 O_2}(z)$. We note that $(O_{1,(m)} O_2)(z)$ for $m < 0$ is related to this specialized normally-ordered product as
\be
	(O_{1,(m)} O_2)(z) = \tfrac{1}{(-m-1)!}\norm{(\pd^m O_1) O_2}(z)
\ee

We call the coefficients of $\Omega^m_{z-w}$ the \emph{singular terms} of the OPE and the call the remaining terms \emph{regular}; for brevity, we write the OPE as
\be
	O_1(z) O_2(w) \sim \sum_{m \geq 0} \Omega^m_{z-w} (O_{1,(m)} O_2)(w)
\ee
Our assumption that $O_1(z), O_2(w)$ are fields on $\CV$ ensures that there are at most a finite number of singular terms in the OPE of any two fields. We note that the OPE has the following skew-symmetry property:
\be
	O_2(z) O_1(w) \sim \sum_{m \geq 0} \Omega^m_{z-w} \bigg((-1)^{|O_1||O_2|}\sum_{l\geq0} \frac{(-1)^{m+l}}{l!} \pd^l (O_{1,(m+l)} O_2)(w)\bigg)
\ee
cf. Proposition 3.2.2 of \cite{GarnerWilliams}.

We can extract the fields $(O_{1,(m)} O_2)(w)$ using the raviolo residues. For $m \geq 0$ we have
\be
\begin{aligned}
	(O_{1,(m)}O_2)(w) & = \oint_{w} \diff z (z-w)^m O_1(z) O_2(w)\\
	& := \oint_0 \diff z (z-w)^m O_1(z) O_2(w)\\
	& \quad - (-1)^{|O_1||O_2|} \oint_0 \diff z (z-w)^m O_2(w)O_1(z)
\end{aligned}
\ee
and
\be
\begin{aligned}
	(O_{1,(-m-1)}O_2)(w) & = \oint_{w} \diff z \Omega^m_{z-w} O_1(z) O_2(w)\\
	& := \oint_0 \diff z \Omega^m_{z-w} O_1(z) O_2(w)\\
	& \quad - (-1)^{|O_1||O_2|} \oint_0 \diff z \Omega^m_{z-w} O_2(w)O_1(z)
\end{aligned}
\ee
In this last expression, $\Omega^m_{z-w}$ must be interpreted as its expansion in the appropriate region, i.e. for small $w$ in the first term and for small $z$ in the second.

As described in Section 3.4 of loc. cit., the specialized normally-ordered product $\norm{O_1 O_2}(z)$ (is the field corresponding to) the physical operator product whereas $(O_{1,(m)} O_2)(z)$ (are the fields corresponding to) the holomorphic-topological descent brackets $\{\!\{O_1, O_2\}\!\}^{(n)}$ of \cite{OhYagi}.%
\footnote{More precisely, the shifted $\lambda$-bracket of loc. cit. is a generating function of these brackets.} %

\section{Superconformal Raviolo Vertex Algebras}
\label{sec:superconf}

In this section we focus on raviolo vertex algebras with an $N=2$ superconformal structure. From the perspective of $HT$-twisted three-dimensional $\CN \geq 2$ QFT, such a superconformal structure arises when the underlying theory has $\CN \geq 4$ supersymmetry. As we shall see in Section \ref{sec:toptwisting}, the superconformal structure succinctly encodes how to deform a mixed holomorphic-topological QFT to two fully topological theories: there are nilpotent symmetries coming from the $N=2$ superconformal symmetry that deform to a the topological $A$ and $B$ twists, cf. \cite{twistedN=4, ESWtax}. We briefly consider different amounts of supersymmetry in Section \ref{sec:moresusy}.

\subsection{The raviolo $N=2$ superconformal algebra}
\label{sec:ravSCA}

We start by describing the underlying symmetry algebra, cf. \cite{HTenhance}. We start with the raviolo Virasoro algebra $Vir$. This has bosonic generators $G_{m}$ and fermionic generators $\xi$ and $\Gamma_m$, for $m \geq 0$, with the following Lie brackets:%
\footnote{We use the convention that $[a,b]$ denotes the \emph{graded} commutator, i.e. it is a commutator if one of $a$ or $b$ is bosonic and an anticommutator if both $a$ and $b$ are fermionic.} %
\be\label{eqn:vir}
\begin{aligned}[]
	[G_m, G_n] = (m-n) G_{m+n-1} \hspace{2cm} [\Gamma_m, \Gamma_n] = 0\\
	[G_m, \Gamma_n] = \begin{cases} 
		0 & n+3 < m\\
		\frac{m(m-1)(m-2)}{12}\xi & n+3 = m\\
		0 & n+2 = m\\
		(m+n+1) \Gamma_{n-m+1} & n+1 \geq m\\
	\end{cases} \hspace{0.5cm}
\end{aligned}
\ee
and where $\xi$ is central. We also have an abelian raviolo current algebra (at level $\xi/3$), which has bosonic generators $S_m$ and fermionic generators $\sigma_m$ and brackets
\be\label{eqn:abeliancurrent}
\begin{aligned}[]
	[S_{m}, S_{n}] = 0 \hspace{2cm} [\sigma_{m}, \sigma_{n}] = 0\\
	[S_{m}, \sigma_{n}] = \begin{cases} 
		m \xi/3 & n+1 = m\\
		0 & \text{ else}
	\end{cases} \hspace{0.5cm}
\end{aligned}
\ee
The brackets of the raviolo Virasoro algebra on this abelian current algebra are as follows: the bosonic modes $G_n$ act as
\be
	[G_m, S_{n}] = -n S_{m+n-1} \qquad [G_m,\sigma_n] = \begin{cases}
	0 & n + 1 < m\\
	(n+1)\sigma_{n-m+1} & n +1 \geq m\\
\end{cases}
\ee
and the fermionic modes $\Gamma_m$ act as
\be
[\Gamma_m, S_{n}] = \begin{cases}
	0 & n > m+1\\
	-n\sigma_{m-n+1} & n \leq m+1\\
\end{cases} \qquad [\Gamma_m, \sigma_{n}] = 0
\ee
The bosonic generators $G_m$ and $S_m$ generate the bosonic positive part of the ordinary $N=2$ superconformal algebra.

In addition to these generators, there are additional fermionic generators $\theta^\pm_m$ and bosonic generators $Q^\pm_m$. The action of the Virasoro algebra on these generators is as follows: the bosonic modes $G_n$ act as
\be
\begin{aligned}[]
	[G_m, \theta^\pm_{n}] & = \big(\tfrac{1}{2}m -n\big)\theta^\pm_{m+n-1}\\
	[G_m, Q^\pm_n] & = \begin{cases}
		0 & n + 1 < m\\
		\big(\tfrac{1}{2}m+n+1\big)Q^\pm_{n-m+1} & n+1 \geq m\\
	\end{cases}
\end{aligned}
\ee
and the fermionic modes $\Gamma_m$ act as
\be
\begin{aligned}[]
	[\Gamma_m, \theta^\pm_{n}] & = \begin{cases}
		0 & n > m +1\\
		-\big(\tfrac{1}{2}(m+1)+n\big)Q^\pm_{m-n+1} & n \leq m+1\\
	\end{cases}\\
	[\Gamma_m, Q^\pm_{n}] & = 0
\end{aligned}
\ee
The action of the current algebra is as follows: the bosonic modes $S_{m}$ act as
\be
	[S_{m}, \theta^\pm_{n}] = \pm \theta^\pm_{n} \qquad [S_{m}, Q^\pm_{n}] = \begin{cases}
		0 & n < m\\
		\pm Q^\pm_{n-m} & n \geq m\\
	\end{cases}
\ee
and the fermionic modes $\sigma_{m}$ act as
\be
	[\sigma_{m}, \theta^\pm_{n}] = \begin{cases}
		0 & n > m\\
		\pm Q^\pm_{m-n} & n \leq m\\
	\end{cases} \qquad [\sigma_{m}, Q^\pm_{n}] = 0
\ee
Finally, the brackets of these generators with themselves are given by
\be
	[\theta^\pm_m, \theta^\pm_n] = 0 \hspace{2cm} [\theta^\pm_m, Q^\pm_n] = 0 \hspace{2cm} [Q^\pm_m, Q^\pm_n] = 0
\ee
together with
\be
\begin{aligned}[]
	[\theta^\pm_m, \theta^\mp_n] = G_{m+n} \pm \tfrac{1}{2}(m-n) S_{m+n-1} \qquad [Q^\pm_m, Q^\mp_n] = 0\\
	[\theta^\pm_m, Q^\mp_n] = \begin{cases} 
		0 & n+2 < m\\
		-\frac{m(m-1)}{6}\xi & n+2 = m\\
		\mp m \sigma_{0} & n+1 = m\\
		-\Gamma_{n-m} \mp \tfrac{1}{2}(m+n+1) \sigma_{n-m+1} & n \geq m\\
	\end{cases} \hspace{-0.25cm}
\end{aligned}
\ee
We call this Lie algebra the \emph{raviolo $N=2$ superconformal algebra} and denote it $SVir^{N=2}$.

We can use this Lie algebra to construct a (universal) raviolo $N=2$ superconformal algebra via induction. We consider the positive subalgebra $SVir^{N=2}_+$ generated by the central generator $\xi$, the bosonic generators $G_m$, $S_m$, and the fermionic generators $\theta^\pm_n$. This can be identified with the positive subalgebra of the ordinary $N=2$ superconformal algebra, together with the central generator $\xi$. We then give $\C$ the structure of a $SVir^{N=2}_+$-module by declaring all of these generators act trivially and then consider the $SVir^{N=2}$-module
\be
	\SVir^{N=2} = U SVir^{N=2} \otimes_{USVir^{N=2}_+} \C
\ee
As $\xi$ is central and acts as $0$ on $\C$, it follows that $\xi$ acts as $0$ on all of $\SVir^{N=2}$; the resulting raviolo vertex algebra is analogous to the Virasoro vertex algebra at vanishing central charge. In order to incorporate the level $\xi$, we can instead give $\C[\xi]$ the structure of a $SVir^{N=2}_+$ module, where $\xi$ acts by multiplication by $\xi$ and the remaining generators act as zero. We then consider the $SVir^{N=2}$-module
\be
	\SVir^{N=2}_{univ} = U SVir^{N=2} \otimes_{U SVir^{N=2}_+} \C[\xi]
\ee
We can realize $\SVir^{N=2}$ as the quotient of $\SVir^{N=2}_{univ}$ by vectors proportional to $\xi$. Note that because $\xi$ is fermionic, i.e. $\xi^2 = 0$ in $U SVir^{N=2}$, this is the only maximal ideal of $\C[\xi]$ and hence the central charge must either vanish, leading to $\SVir^{N=2}$, or it must be unconstrained, leading to $\SVir^{N=2}_{univ}$. 

We identify the vacuum vector with the image of $1 \otimes 1$ in the quotient and the translation operator is identified as $\pd = G_0$. The image of the linear state $\Gamma_0 \otimes 1$ corresponds to the fermionic field (of $R$-charge 1 and spin 2)
\be
	\Gamma(z) = \sum_{n \geq 0} z^{n} \Gamma_{n} + \Omega^n_z G_{n}\,,
\ee
and the image of the linear state $\sigma_0 \otimes 1$ corresponds to the fermionic field (of $R$-charge 1 and spin 1)
\be
	\sigma(z) = \sum_{n \geq 0} z^{n} \sigma_{n} + \Omega^n_z S_{n}\,.
\ee
The imagine of the linear states $Q^\pm_0 \otimes 1$ correspond to the bosonic fields (of $R$-charge 1 and spin $\tfrac{3}{2}$)
\be
	Q^\pm(z) = \sum_{n \geq 0} z^{n} Q^\pm_{n} + \Omega^n_z \theta^\pm_{n}\,.
\ee

\begin{proposition}
	$\SVir^{N=2}$ and $\SVir^{N=2}_{univ}$ are raviolo vertex algebras with spin grading.
\end{proposition}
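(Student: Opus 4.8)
The plan is to build both $\SVir^{N=2}$ and $\SVir^{N=2}_{univ}$ by exhibiting explicit generating fields and invoking the raviolo analogue of the standard reconstruction theorem, which follows from the associativity statement (Theorem 3.3.1 of \cite{GarnerWilliams}) together with the equivalent formulations of mutual locality (Proposition 2.2.2 of loc.\ cit.). The generating fields are $\Gamma(z)$, $\sigma(z)$ and $Q^\pm(z)$ as written above, together, in the universal case, with the constant field $\xi\cdot\id_\CV$ attached to the state $\xi\otimes 1$. First I would pin down the underlying graded vector space. Since the brackets $[\Gamma_m,\Gamma_n]$, $[\sigma_m,\sigma_n]$, $[\Gamma_m,\sigma_n]$, $[Q^\pm_m,Q^\pm_n]$, $[Q^\pm_m,Q^\mp_n]$, $[\Gamma_m,Q^\pm_n]$ and $[\sigma_m,Q^\pm_n]$ all vanish, the span $\lie n_- := \langle\Gamma_m,\sigma_m,Q^\pm_m:m\ge 0\rangle$ is an abelian subalgebra of $SVir^{N=2}$ complementary to $SVir^{N=2}_+$, so PBW identifies $\SVir^{N=2}\cong\Sym(\lie n_-)$ as vector spaces (and $\SVir^{N=2}_{univ}\cong\Sym(\lie n_-)\otimes_\C\C[\xi]$, free over $\C[\xi]$), with $|0\rangle=1$. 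In particular $\CV$ is spanned by monomials in the modes $\Gamma_m=\Gamma_{(-m-1)}$, $\sigma_m=\sigma_{(-m-1)}$, $Q^\pm_m=Q^\pm_{(-m-1)}$ applied to $|0\rangle$, so the generating fields satisfy the spanning hypothesis of the reconstruction theorem. One then assigns $R$-charge and spin bidegrees to all modes, compatibly with $|0\rangle$ having bidegree $(0,0)$, $\pd=G_0$ having bidegree $(0,1)$, $\Gamma(z)$ having $R$-charge $1$ and spin $2$, $\sigma(z)$ having $R$-charge $1$ and spin $1$, and $Q^\pm(z)$ having $R$-charge $1$ and spin $\tfrac32$, and checks that every bracket of $SVir^{N=2}$ is bihomogeneous of the predicted bidegree; this makes the action on $\Sym(\lie n_-)$ graded and shows the spin grading contributes no signs.

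Next I would dispatch the easy axioms. That $\Gamma(z),\sigma(z),Q^\pm(z)$ are genuine raviolo fields holds because their $\Omega$-modes $G_m,S_m,\theta^\pm_m$ all lie in $SVir^{N=2}_+$ and hence annihilate $|0\rangle$, while the relevant brackets (e.g.\ $[G_m,\Gamma_n]=0$ for $n+3<m$, and the analogous truncations involving $S_m$ and $\theta^\pm_m$) vanish once $m$ exceeds the finitely many mode-indices occurring in a fixed vector, so those modes may be commuted to the right and kill the vacuum. The vacuum axiom is immediate since $\Gamma(z)|0\rangle=\sum_{n\ge0}z^n\Gamma_n|0\rangle\in\CV[\![z]\!]$ has value $\Gamma_0|0\rangle$ at $z=0$, and likewise for $\sigma,Q^\pm$. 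The translation axiom $[G_0,a(z)]=\pd_z a(z)$ is a direct term-by-term check using $[G_0,\Gamma_n]=(n+1)\Gamma_{n+1}$, $[G_0,G_n]=-nG_{n-1}$, $[G_0,\sigma_n]=(n+1)\sigma_{n+1}$, $[G_0,S_n]=-nS_{n-1}$, $[G_0,Q^\pm_n]=(n+1)Q^\pm_{n+1}$ and $[G_0,\theta^\pm_n]=-n\theta^\pm_{n-1}$ against $\pd_z z^n=nz^{n-1}$ and $\pd_z\Omega^n_z=-(n+1)\Omega^{n+1}_z$.

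The substantive step is mutual locality of the generating fields. For each ordered pair drawn from $\{\Gamma,\sigma,Q^+,Q^-\}$ one computes the operator product from the fact that, on the generating fields, modes act by the graded commutator of $SVir^{N=2}$: the singular part of $a(z)b(w)$ is controlled by $[a(z),b(w)]$, which the explicit brackets \eqref{eqn:vir}, \eqref{eqn:abeliancurrent} and the $G_m$-, $\Gamma_m$-, $S_m$-, $\sigma_m$-, $\theta^\pm_m$-, $Q^\pm_m$-brackets expand into finitely many monomials in $z$ and $w$ times $\Omega$'s. Using the expansion maps of Lemma~\ref{lem:expand} one rewrites each matrix element $\langle\varphi,a(z)b(w)v\rangle$ (and the corresponding $\pm\langle\varphi,b(w)a(z)v\rangle$) as the two expansions of a single element of $\CK_{z,w,z-w}$ whose coefficient of $\Omega^l_{z-w}$ vanishes for $l\gg0$ — concretely, an OPE of the schematic shape $a(z)b(w)\sim\Omega^0_{z-w}(\cdots)+\Omega^1_{z-w}(\cdots)+\cdots$ terminating at bounded order, with the truncation and the top $\xi$-term visible from the ``$n+3<m$'' type cases in the brackets. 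The mutual consistency of these OPEs (equivalently, the associativity/Borcherds-type identity required by the reconstruction theorem) is nothing but the Jacobi identity of the Lie superalgebra $SVir^{N=2}$, which holds by hypothesis, so no computation beyond translating Jacobi into the raviolo commutator formula is needed. I expect this locality/consistency bookkeeping — organizing the many case distinctions and tracking the fermionic signs among $\Gamma,\sigma,\theta^\pm$ — to be the main obstacle; everything else is formal.

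Finally I would assemble the two algebras. In the universal case the extra field $\xi\cdot\id_\CV$ is central, so it has trivial OPE with everything and is manifestly local; the reconstruction theorem then produces a raviolo vertex algebra structure on $\SVir^{N=2}_{univ}$ over the base ring $\C[\xi]$ in the sense of Section~2.5 of \cite{GarnerWilliams}, and the bidegrees of the first step make it a raviolo vertex algebra with spin grading. The non-universal algebra $\SVir^{N=2}=\SVir^{N=2}_{univ}/\xi\,\SVir^{N=2}_{univ}$ is the quotient by a raviolo vertex algebra ideal (equivalently, run the same argument with $\xi$ acting as $0$), hence inherits the raviolo vertex algebra with spin grading structure, completing the proof.
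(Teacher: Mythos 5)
Your proposal is correct and follows exactly the route the paper takes: the paper's proof is a one-line appeal to the raviolo reconstruction theorem (Proposition 4.0.1 of \cite{GarnerWilliams}), and your argument simply spells out the verification of that theorem's hypotheses (spanning via PBW over the abelian complement $\langle\Gamma_m,\sigma_m,Q^\pm_m\rangle$, the vacuum and translation axioms, and mutual locality reducing to the Jacobi identity of $SVir^{N=2}$). The details you supply are accurate, so this is a fleshed-out version of the same proof rather than a different one.
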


\begin{proof}
	With the above data, the proof is a simple application of the raviolo reconstruction theorem, i.e. Proposition 4.0.1 of \cite{GarnerWilliams}.
\end{proof}

We note that $\SVir^{N=2}_{univ}$ is strongly generated (over $\C[\xi]$) by the fermionic fields $\Gamma, \sigma$ and the bosonic fields $Q^\pm$, i.e. the vector space is spanned (as an $\C[\xi]$-module) by normally-ordered products of these fields acting on the vacuum state $|0\rangle$. The above commutators of the modes of the strong generators translate to the following OPEs. The OPE of the fermion $\Gamma$ with itself is given by
\be
	\Gamma(z) \Gamma(w) \sim \Omega_{z-w}^3(\xi/2) + 2 \Omega_{z-w}^1 \Gamma(w) + \Omega_{z-w}^0 \pd \Gamma(w)
\ee
corresponding to the raviolo Virasoro algebra. The OPE of the fermion $\sigma$ with itself is given by
\be
	\sigma(z) \sigma(w) \sim \Omega^1_{z-w} (\xi/3)
\ee
generating a copy of the $\fgl(1)$ raviolo current algebra. The OPE of these two fermions takes the form
\be
	\Gamma(z) \sigma(w) \sim \Omega^1_{z-w} \sigma(w) + \Omega^0_{z-w} \pd \sigma(w)
\ee
and says that $\sigma(z)$ transforms as a raviolo Virasoro primary of spin 1. The bosonic fields $Q^\pm$ transform as raviolo Virasoro primaries of spin $\frac{3}{2}$
\be
	\Gamma(z) Q^\pm(w) \sim \tfrac{3}{2} \Omega^1_{z-w} Q^\pm(w) + \Omega^0_{z-w} \pd Q^\pm(w)\,,
\ee
and as current algebra primaries of weight $\pm 1$
\be
	\sigma(z) Q^\pm(w) \sim \pm \Omega^0_{z-w} Q^\pm(w)
\ee
Finally, the OPE of $Q_\pm$ with itself is regular and the OPE of $Q^\pm$ with $Q^\mp$ is given by
\be
	Q^\pm(z) Q^{\mp}(w) \sim \Omega_{z-w}^2 (-\xi/3) \mp \Omega_{z-w}^1 \sigma(w) + \Omega_{z-w}^0\big(-\Gamma(w) \mp \tfrac{1}{2}\pd \sigma(w)\big)
\ee
We will call a conformal raviolo vertex algebra (of central charge $\xi$), cf. Definition 4.4.2  of \cite{GarnerWilliams}, with a choice of $\sigma$ and $Q^\pm$ satisfying the above OPEs an \emph{$N=2$ superconformal} raviolo vertex algebra (of central charge $\xi$). All of the explicit examples we consider will have vanishing central charge $\xi = 0$. Any $N=2$ superconformal raviolo vertex algebra admits an additional grading by $S_0$; we refer to $\sigma(z)$ as the \emph{superconformal current} and to weights with respect to $S_0 = \sigma_{(0)}$ as \emph{$S$-charge}.

There is a $\Z_2$ \emph{mirror automorphism} of the $N=2$ superconformal algebra that exchanges the supercharges $Q^+ \leftrightarrow Q^-$ and negates the superconformal current $\sigma \leftrightarrow -\sigma$. We say that two $N=2$ superconformal raviolo vertex algebras are a \emph{three-dimensional mirror pair} if there is an isomorphism between them that intertwines their actions of the $N=2$ superconformal raviolo vertex algebra with this mirror automorphism. We expect that the $N=2$ superconformal raviolo vertex algebras of a mirror pair of three-dimensional $\CN=4$ QFTs are themselves a mirror pair.

The $N=2$ superconformal raviolo vertex algebra is the raviolo analog of the $N=2$ superconformal algebra in the theory of vertex algebras. Many aspects of this section are direct consequences of this simple fact; we largely adapt the arguments of \cite{LVWchiral} to the raviolo vertex algebra setting.

There is a copy of $\mathfrak{osp}(2|2)$ inside the mode algebra of the $N=2$ superconformal raviolo vertex algebra: the mode $\sigma_{(0)}$ of the current $\sigma$ and the modes $\Gamma_{(0)}$, $\Gamma_{(1)}$, $\Gamma_{(2)}$ of $\Gamma$ generate the bosonic subalgebra and the modes $Q^\pm_{(0)}, Q^\pm_{(1)}$ of $Q^\pm$ generate the fermionic subspace. We interpret this copy of $\mathfrak{osp}(2|2)$ as the $HT$-twist, i.e. $Q_{HT}$-cohomology, of the three-dimensional $\CN=4$ superconformal algebra $\mathfrak{osp}(4|4)$, cf. \cite{HTenhance}.

\subsection{Higgs and Coulomb branch primary operators}
\label{sec:HCprimary}
The first class of operators we consider are the raviolo analog of chiral and twisted-chiral primary operators in an $N=2$ superconformal vertex algebra. Suppose $O$ is a raviolo Virasoro primary of spin $j$ and a current algebra primary of $S$-charge $q$, i.e. its OPEs with $\Gamma$ and $\sigma$ are then given by
\be
	\Gamma(z) O(w) \sim j \Omega^1_{z-w} O(w) + \Omega^0_{z-w} \pd O(w) \qquad \sigma(z) O(w) \sim q \Omega^0_{z-w} O(w)
\ee
We say that $O$ is a \emph{Higgs branch primary operator} if its OPEs with $Q^\pm$ take the form
\be
	Q^+(z) O(w) \sim 0 \qquad Q^-(z) O(w) \sim \Omega^0_{z-w} \Psi_O(w)
\ee
where $\Psi_O$, called the \emph{superpartner} of $O$, is a second raviolo Virasoro primary of spin $j+\frac{1}{2}$ and current algebra primary of $S$-charge $q-1$ of the opposite parity; the operators $O$ and $\Psi_O$ have the same $R$-charge. Similarly, we say it is a \emph{Coulomb branch primary operator} if it has the above OPEs with the roles of $Q^+$ and $Q^-$ exchanged. The following lemma is an immediate consequence of superconformal symmetry:

\begin{lemma}
	\label{lem:HBP}
	Let $O$ be a Higgs branch primary operator of spin $j$ and $S$-charge $q$, then $j = \frac{q}{2}$. Moreover, the OPEs of $Q^\pm$ with the superpartner $\Psi_O$ are given by
	\begin{equation*}
		Q^+(z) \Psi_O(w) \sim q\Omega^1_{z-w} O(w) + \Omega^0_{z-w} \pd O(w) \qquad Q^-(z) \Psi_O(w) \sim 0
	\end{equation*}
\end{lemma}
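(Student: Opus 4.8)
The plan is to read the claim directly off the raviolo $N=2$ superconformal (anti)commutation relations, transcribing the classic chiral-ring computation of \cite{LVWchiral} to the raviolo setting. The key observation is that the superpartner is literally a mode of $O$, namely $\Psi_O = Q^-_{(0)} O$ — the $\Omega^0_{z-w}$-coefficient of the OPE $Q^-(z) O(w) \sim \Omega^0_{z-w}\Psi_O(w)$ — so every assertion reduces to evaluating modes of $Q^\pm$ on $Q^-_{(0)} O$. Throughout I will use the Higgs branch primary hypothesis in the form $Q^+_{(m)} O = 0$ for all $m \geq 0$ and $Q^-_{(m)} O = 0$ for all $m \geq 1$, together with the Virasoro/current primary conditions $\Gamma_{(0)} O = \pd O$, $\Gamma_{(1)} O = jO$, $\Gamma_{(m)} O = 0$ for $m \geq 2$, $\sigma_{(0)} O = qO$, and $\sigma_{(m)} O = 0$ for $m \geq 1$.

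To obtain $j = \tfrac q2$, I would apply the anticommutator $\{Q^+_{(0)}, Q^-_{(1)}\} = \Gamma_{(1)} - \tfrac12\sigma_{(0)}$ to $O$. This identity lives inside the $\mathfrak{osp}(2|2)$ subalgebra of modes identified in Section \ref{sec:ravSCA} (it is $[\theta^+_m,\theta^-_n] = G_{m+n} + \tfrac12(m-n)S_{m+n-1}$ for $m=0$, $n=1$), and can equally be read off from the OPE $Q^+(z)Q^-(w)$. Its left-hand side annihilates $O$, since $Q^-_{(1)} O = 0$ and $Q^+_{(0)} O = 0$, while its right-hand side is $(j - \tfrac q2) O$; as $O \neq 0$ this forces $j = \tfrac q2$.

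For the two OPEs I must compute the states $Q^\pm_{(m)}\bigl(Q^-_{(0)} O\bigr)$ for each $m \geq 0$. For $Q^-$: since the OPE of $Q^-$ with itself is regular, the modes $Q^-_{(m)}$ anticommute with one another, so $Q^-_{(m)}\Psi_O = -Q^-_{(0)} Q^-_{(m)} O$, which vanishes for $m \geq 1$ because $Q^-_{(m)} O = 0$ and for $m = 0$ because $(Q^-_{(0)})^2 = \tfrac12\{Q^-_{(0)},Q^-_{(0)}\} = 0$; hence $Q^-(z)\Psi_O(w) \sim 0$. For $Q^+$: since $Q^+_{(m)} O = 0$ one has $Q^+_{(m)}\Psi_O = \{Q^+_{(m)}, Q^-_{(0)}\} O$. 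For $m = 0, 1$ these anticommutators are again $\mathfrak{osp}(2|2)$ relations, $\{Q^+_{(0)}, Q^-_{(0)}\} = \Gamma_{(0)} = \pd$ and $\{Q^+_{(1)}, Q^-_{(0)}\} = \Gamma_{(1)} + \tfrac12\sigma_{(0)}$, giving $Q^+_{(0)}\Psi_O = \pd O$ and $Q^+_{(1)}\Psi_O = (j + \tfrac q2)O = qO$ by the previous step. For $m \geq 2$ I would instead use raviolo associativity (Theorem 3.3.1 of \cite{GarnerWilliams}) to rewrite $Q^+_{(m)}\bigl(Q^-_{(0)} O\bigr)$ as a sum of modes of the OPE coefficients $Q^+_{(j)} Q^- \in \{-\Gamma - \tfrac12\pd\sigma,\, -\sigma,\, -\tfrac{\xi}{3}|0\rangle\}$ — read off from $Q^+(z) Q^-(w) \sim \Omega^2_{z-w}(-\xi/3) - \Omega^1_{z-w}\sigma(w) + \Omega^0_{z-w}\bigl(-\Gamma(w) - \tfrac12\pd\sigma(w)\bigr)$ — acting on $O$; by the primary conditions every such term vanishes once $m \geq 2$. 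Collecting the coefficients then yields $Q^+(z)\Psi_O(w) \sim q\Omega^1_{z-w} O(w) + \Omega^0_{z-w}\pd O(w)$.

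The only real difficulty I anticipate is sign and index bookkeeping: the modes $Q^\pm_{(m)}$ for $m \geq 0$ are fermionic, so Koszul signs must be tracked, as must the $\mp$ signs in the superconformal relations and the exact form of the raviolo associativity identity. The step most prone to error is verifying that no $\Omega^m_{z-w}$ with $m \geq 2$ survives in $Q^+(z)\Psi_O(w)$, i.e.\ that $\Psi_O$ is only a Virasoro-plus-current descendant of $O$ — which is reassuringly consistent with the defining requirement that $\Psi_O$ be a Virasoro primary of spin $j + \tfrac12$ and a current primary of $S$-charge $q-1$.
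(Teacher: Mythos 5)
Your proof is correct: the paper omits any argument for this lemma (it is simply declared ``an immediate consequence of superconformal symmetry''), and your mode computation --- $j=\tfrac q2$ from $[\theta^+_0,\theta^-_1]=G_1-\tfrac12 S_0$ annihilating $O$, and the $Q^\pm$ OPEs with $\Psi_O=Q^-_{(0)}O$ from $[\theta^+_m,\theta^-_0]=G_m+\tfrac12 m S_{m-1}$ together with the regularity of $Q^-(z)Q^-(w)$ --- is exactly the intended raviolo transcription of the Lerche--Vafa--Warner argument, with all signs and coefficients checking out against the relations in Section~\ref{sec:ravSCA}. The only cosmetic remark is that your appeal to associativity for $m\geq 2$ is the same mechanism that justifies the mode brackets for $m=0,1$ in a general $\CV$, so no separate treatment is needed there.
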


There is a similar statement for Coulomb branch primary operators, where the spin and $S$-charge are related as $j = - \frac{q}{2}$; the OPEs with the superpartner are given by exchanging $Q^+ \leftrightarrow Q^-$ and negating the $S$-charge $q \to -q$.

We now introduce a special class of $N=2$ superconformal raviolo vertex algebras: we say that $\CV$ \emph{satisfies the BPS bound} if
\be
\label{eq:BPSbound}
	\CV^{r,(j),q} = 0\,,\quad  j < \frac{|q|}{2}
\ee
where $\CV^{r,(j),q}$ is the subspace of vectors with spin $j$, $R$-charge $r$, and $S$-charge $q$. Note that the BPS bound implies all operators have non-negative spins $j \geq 0$.

For the $N=2$ superconformal raviolo vertex algebra of local operators in an $HT$-twisted unitary three-dimensional $\CN=4$ SCFT, we expect this bound to be a direct consequence of the usual BPS bound; we will give several examples of $N=2$ superconformal raviolo vertex algebras satifying this bound in Section \ref{sec:SCexamples}.

\begin{lemma}
	\label{lem:BPSboundprimaries}
	Let $\CV$ be an $N=2$ superconformal raviolo vertex algebra that satisfies the BPS bound, then operators saturating the BPS bound are necessarily raviolo Virasoro primaries and superconformal current algebra primaries.
\end{lemma}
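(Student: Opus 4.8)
The plan is to run the classical ``chiral primary'' argument from the theory of unitary $N=2$ superconformal vertex algebras (cf.\ \cite{LVWchiral}) in the raviolo setting, with the BPS bound \eqref{eq:BPSbound} playing the role of the unitarity bound. Fix $O \in \CV^{r,(j),q}$ saturating the bound, so $j = \tfrac{|q|}{2}$. The OPEs I must control are $\Gamma(z)O(w) \sim \sum_{m\geq 0}\Omega^m_{z-w}(\Gamma_{(m)}O)(w)$ and $\sigma(z)O(w) \sim \sum_{m\geq 0}\Omega^m_{z-w}(\sigma_{(m)}O)(w)$. The identities $\Gamma_{(0)}O = \pd O$ and $\Gamma_{(1)}O = jO$ are automatic: by definition of a conformal raviolo vertex algebra the translation operator is $\pd = \Gamma_{(0)} = G_0$ and the spin grading is the $\Gamma_{(1)} = G_1$ grading. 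Likewise $\sigma_{(0)}O = qO$ is the definition of $S$-charge. So the only thing to prove is that $\Gamma_{(m)}O = 0$ for $m\geq 2$ and $\sigma_{(m)}O = 0$ for $m\geq 1$.

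The heart of the matter is grading bookkeeping for the OPE coefficients. First I would record that both generating fields $\Gamma$ and $\sigma$ carry $R$-charge $1$, $S$-charge $0$, and spins $2$ and $1$: the vanishing of their $S$-charge is the statement $\sigma_{(0)}\Gamma = 0 = \sigma_{(0)}\sigma$, which one reads off from $\sigma(z)\sigma(w) \sim \Omega^1_{z-w}(\xi/3)$ and, for $\Gamma$, either from the bracket $[\Gamma_0, S_0] = 0$ or by applying skew-symmetry to $\Gamma(z)\sigma(w) \sim \Omega^1_{z-w}\sigma(w) + \Omega^0_{z-w}\pd\sigma(w)$. Next I would note that the structure constant $\Omega^m_{z-w} \in \CK_{z,w,z-w}$ sitting in front of $(\Gamma_{(m)}O)(w)$ or $(\sigma_{(m)}O)(w)$ carries $R$-charge $1$ (it is a degree-$1$ element), $S$-charge $0$, and spin $m+1$ — consistent with $\pd_z\Omega^m = -(m+1)\Omega^{m+1}$ together with $\pd$ having spin $1$, and cross-checked against the known primary OPEs (where $\Omega^1 O$ and $\Omega^0\pd O$ both have spin $j+2$). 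Combining these, $\Gamma_{(m)}O \in \CV^{r,(j+1-m),q}$ and $\sigma_{(m)}O \in \CV^{r,(j-m),q}$.

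Finally I would invoke the BPS bound. For $m\geq 2$ the spin of $\Gamma_{(m)}O$ is $j+1-m \leq j-1 < \tfrac{|q|}{2}$, so $\CV^{r,(j+1-m),q} = 0$ and $\Gamma_{(m)}O = 0$; hence $\Gamma(z)O(w) \sim j\,\Omega^1_{z-w}O(w) + \Omega^0_{z-w}\pd O(w)$ and $O$ is a raviolo Virasoro primary of spin $j$. Similarly, for $m\geq 1$ the spin of $\sigma_{(m)}O$ is $j-m \leq j-1 < \tfrac{|q|}{2}$, so $\sigma_{(m)}O = 0$ and $\sigma(z)O(w) \sim q\,\Omega^0_{z-w}O(w)$, so $O$ is a superconformal current algebra primary of $S$-charge $q$. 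The whole argument is symmetric under the mirror automorphism $q \leftrightarrow -q$, so the sign of $q$ is immaterial. I expect the only real obstacle to be the bookkeeping of the middle paragraph — once one is confident that $\Gamma$ and $\sigma$ are $S$-neutral and that $\Omega^m_{z-w}$ contributes spin $m+1$, the conclusion drops out of the BPS bound with no use of the detailed $SVir^{N=2}$ brackets at all.
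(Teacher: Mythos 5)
Your proof is correct and follows essentially the same route as the paper's: the paper likewise observes that $S_n O$ and $G_{n+1}O$ for $n>0$ preserve $S$-charge but lower the spin to $j-n < \tfrac{|q|}{2}$, so the BPS bound forces them to vanish. Your version simply spells out the grading bookkeeping (the $S$-neutrality of $\Gamma$ and $\sigma$ and the spin $m+1$ carried by $\Omega^m_{z-w}$) that the paper leaves implicit.
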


\begin{proof}
	Suppose $O$ saturates the BPS bound, i.e. $j = \frac{|q|}{2}$. The operators $S_n O$ and $G_{n+1} O$ for $n > 0$ have the same $S$-charge as $O$ but spin $j - n < j = \frac{|q|}{2}$, hence they must vanish, i.e. $O$ is a raviolo Virasoro primary and a superconformal current algebra primary.
\end{proof}

\begin{corollary}
	\label{cor:HBP2}
	Let $\CV$ be an $N=2$ superconformal raviolo vertex algebra satisfying the BPS bound and let $O\in \CV$ be an operator of spin $j$ and $S$-charge $q$. The following are equivalent:
	\begin{itemize}
		\item[1)] $O$ is a Higgs branch primary operator
		\item[2)] $O$ has $j = \frac{q}{2}$ and $Q^+_{(0)} O = 0$, $Q^-_{(1)} O = 0$
	\end{itemize}
\end{corollary}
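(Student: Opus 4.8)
The plan is to translate both conditions into statements about modes and then play the BPS-bound spin inequalities off against the explicit brackets of $SVir^{N=2}$ from Section~\ref{sec:ravSCA}. I will use the (standard) bookkeeping that a spin-$h$ raviolo field has modes $A_{(m)}$ shifting spin by $h-m-1$; in particular, for $m\ge 0$ the mode $Q^\pm_{(m)}=\theta^\pm_m$ shifts spin by $\tfrac12-m$, has $R$-charge $0$, and has $S$-charge $\pm1$. With this, the direction $(1)\Rightarrow(2)$ just amounts to reading off OPE coefficients: $Q^+(z)O(w)\sim 0$ gives $Q^+_{(0)}O=0$, the absence of an $\Omega^1_{z-w}$ term in $Q^-(z)O(w)\sim\Omega^0_{z-w}\Psi_O(w)$ gives $Q^-_{(1)}O=0$, and $j=\tfrac q2$ is Lemma~\ref{lem:HBP}.

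For $(2)\Rightarrow(1)$, assume $j=\tfrac q2$ and $Q^+_{(0)}O=Q^-_{(1)}O=0$. First I would observe that the BPS bound $j\ge\tfrac{|q|}2$ forces $q\ge0$, so $O$ saturates the bound and Lemma~\ref{lem:BPSboundprimaries} makes $O$ a raviolo Virasoro primary of spin $j$ and a current primary of $S$-charge $q$, i.e. $G_mO=0$ for $m\ge2$, $G_1O=jO$, $G_0O=\pd O$, $S_mO=0$ for $m\ge1$, $S_0O=qO$. Next, $Q^+_{(m)}O$ has $S$-charge $q+1>0$ and spin $\tfrac{q+1}2-m$; for $m\ge1$ this is strictly below $\tfrac{|q+1|}2$ and so vanishes by the BPS bound, while for $m=0$ it vanishes by hypothesis, giving $Q^+(z)O(w)\sim0$. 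Likewise $Q^-_{(m)}O$ has $S$-charge $q-1$ and spin $\tfrac{q+1}2-m$, which lies below $\tfrac{|q-1|}2$ for $m\ge2$ and vanishes by hypothesis for $m=1$, so $Q^-(z)O(w)\sim\Omega^0_{z-w}\Psi_O(w)$ with $\Psi_O:=Q^-_{(0)}O$ of spin $j+\tfrac12$, $S$-charge $q-1$, the same $R$-charge as $O$, and — since $\theta^-_0$ is odd — the opposite parity.

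It then remains to check that $\Psi_O$ is a Virasoro primary of spin $j+\tfrac12$ and a current primary of $S$-charge $q-1$. Using $[G_m,\theta^-_0]=\tfrac m2\,\theta^-_{m-1}$ and the $SVir^{N=2}$ brackets of the $S_m$ with $\theta^-_0$, together with $\theta^-_mO=Q^-_{(m)}O$ and the primary relations for $O$, one computes $G_0\Psi_O=\theta^-_0\pd O=\pd\Psi_O$, $G_1\Psi_O=(j+\tfrac12)\Psi_O$, $S_0\Psi_O=(q-1)\Psi_O$, $G_2\Psi_O=\theta^-_1O+\theta^-_0G_2O=0$ (here the hypothesis $Q^-_{(1)}O=0$ enters), and $S_1\Psi_O=\theta^-_0S_1O=0$; the remaining modes $G_m\Psi_O$ for $m\ge3$ and $S_m\Psi_O$ for $m\ge2$ again have spin strictly below $\tfrac{|q-1|}2$ and vanish by the BPS bound. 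This supplies all the defining data of a Higgs branch primary.

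The main obstacle is this last step. Most of the vanishings come for free from spin-counting against the BPS bound, but the finitely many \emph{borderline} modes — $Q^+_{(0)}O$, $Q^-_{(1)}O$, $G_2\Psi_O$, $S_1\Psi_O$ — merely saturate the bound, so their vanishing must be pulled out of the explicit $SVir^{N=2}$ brackets together with the two hypotheses; this is precisely why exactly those two conditions appear in statement (2). The only other thing needing care is the spin and parity bookkeeping of the modes, especially the degree shift by one between the $m<0$ and $m\ge0$ modes.
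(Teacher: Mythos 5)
Your argument is correct and follows essentially the same route as the paper: Lemma \ref{lem:HBP} plus the definition for $1)\Rightarrow 2)$, and for the converse Lemma \ref{lem:BPSboundprimaries} together with spin/$S$-charge counting against the BPS bound to kill all modes $Q^{+}_{(n)}O$ ($n\geq 1$) and $Q^{-}_{(n)}O$ ($n\geq 2$), leaving only the two borderline modes that vanish by hypothesis. The one thing you do beyond the paper's proof is explicitly verify that $\Psi_O=Q^-_{(0)}O$ is itself a raviolo Virasoro and current primary (which the definition of a Higgs branch primary demands but the paper leaves implicit), and this checks out --- the only caveat being that your displayed identity $S_1\Psi_O=\theta^-_0 S_1 O$ silently drops the commutator term $[S_1,\theta^-_0]O=-\theta^-_1 O$, which is harmless only because it too vanishes by the hypothesis $Q^-_{(1)}O=0$.
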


There is an analogous statement for Coulomb branch primary operators: they satisfy $j = -\frac{q}{2}$ together with $Q^-_{(0)} O = 0$, $Q^+_{(1)} O = 0$. The proof is identical.

\begin{proof}
	Lemma \ref{lem:HBP}, together with the definition of a Higgs branch primary operator, gives us the implication $1) \Rightarrow 2)$, so we are left with proving the opposite direction.
	
	Suppose $O$ has $j = \frac{q}{2}$. Lemma \ref{lem:BPSboundprimaries} implies that $O$ is a raviolo Virasoro primary and a superconformal current algebra primary, so we are left with checking its OPEs with $Q^\pm$. The most general form of these OPEs is
	\be
		Q^\pm(z) O(w) \sim \sum_{n \geq 0} \Omega^n_{z-w} (Q^\pm_{(n)} O)(w)
	\ee
	Note that $Q^\pm_{(n)} O$ has spin $\frac{1}{2}(q+1)-n$ and $S$-charge $q\pm 1$. The BPS bound then implies $Q^+_{(n)} O$ vanishes for $n > 0$ and $Q^-_{(n)} O$ vanishes for $n > 1$, whereas a Higgs branch primary further satisfies $Q^\pm_{(0)} O = 0$ and $Q^-_{(1)} O = 0$.
\end{proof}

\subsection{Superconformal flavor symmetries}
\label{sec:HCsymmetries}

Suppose the $N=2$ superconformal raviolo vertex algebra $\CV$ satisfies the BPS bound and has a Hamiltonian $\fg$ symmetry, cf. Definition 4.3.2 of \cite{GarnerWilliams}, generated by fields $\mu_a$ of $R$-charge 1, spin $1$, and $S$-charge 0. Lemma \ref{lem:HBP} immediately implies $\mu_a$ cannot be either a Higgs branch or Coulomb branch primary. However, $\mu_a$ \emph{can} be the superpartner of such a primary: we say that the $\mu_a$ generate a \emph{Higgs branch flavor symmetry} if the $\mu_a$ are the superpartners of Higgs branch primary operators $M_a$ (necessarily having $R$-charge 1, spin $\frac{1}{2}$ and $S$-charge $1$):
\be
\begin{aligned}
	Q^+(z)M_a(w) \sim 0 \qquad Q^+(z)\mu_a(w) \sim \Omega_{z-w}^1 M_a(w) + \Omega_{z-w}^0 \pd M_a(w)\\
	Q^-(z) M_a(w) \sim \Omega_{z-w}^0 \mu_a(w) \qquad  Q^-(z) \mu_a(w) \sim 0 \hspace{1.5cm}
\end{aligned}
\ee
Similarly, we say the $\mu_a$ generate a \emph{Coulomb branch flavor symmetry} if they are the superpartners of Coulomb branch primary operators (necessarily having $R$-charge 1, spin $\frac{1}{2}$, and $S$-charge $-1$).

Our first result concerning Higgs branch flavor symmetries is the following.
\begin{lemma}
	\label{lem:HBsymmprimary}
	Let $\CV$ be an $N=2$ superconformal raviolo vertex algebra satisfying the BPS bound, and suppose the fields $\mu_a$ generate a $\fg$ Higgs branch flavor symmetry of $\CV$, then any Higgs branch primary $O$ is a primary for this current algebra. Moreover, the superpartner $\Psi_O$ is a current algebra primary transforming in the same representation.
\end{lemma}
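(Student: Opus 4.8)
The plan is to run the raviolo analogue of the standard chiral-ring argument (cf.\ \cite{LVWchiral}), extracting everything from two ingredients: the BPS bound applied to the spins and $S$-charges of OPE coefficients, and the identities $\Psi_O = Q^-_{(0)}O$, $\mu_a = Q^-_{(0)}M_a$ together with the fact that $Q^-_{(0)}$ graded-commutes with every mode of $\mu_a$. The last fact I would deduce from the regularity of $Q^-(z)\mu_a(w)$, which is part of the definition of a Higgs branch flavor symmetry: the raviolo mode-commutator identities (Section~3 of \cite{GarnerWilliams}) then give $[Q^-_{(0)},\mu_{a,(n)}]_\pm = (Q^-_{(0)}\mu_a)_{(n)} = 0$ for all $n$, and likewise $[Q^+_{(0)},\mu_{a,(n)}]_\pm = (Q^+_{(0)}\mu_a)_{(n)} = (\pd M_a)_{(n)}$.

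\emph{Step 1: the primary $O$.} Write the OPE as $\mu_a(z)O(w)\sim\sum_{n\ge0}\Omega^n_{z-w}(\mu_{a,(n)}O)(w)$. Since $\mu_a$ has spin $1$, $R$-charge $1$, $S$-charge $0$, the coefficient $\mu_{a,(n)}O$ has spin $j-n$ and $S$-charge $q$, where $O$ has spin $j$ and $S$-charge $q$. Lemma~\ref{lem:HBP} gives $j=\tfrac{q}{2}$, so $q=2j\ge0$ by the BPS bound, and then the BPS bound kills $\mu_{a,(n)}O$ for every $n\ge1$. Hence $\mu_a(z)O(w)\sim\Omega^0_{z-w}(\mu_{a,(0)}O)(w)$: this is exactly the statement that $O$ is a current algebra primary, the $\fg$-action being $O\mapsto\mu_{a,(0)}O$ (which closes into $\fg$ because $\mu_a$ is a Hamiltonian $\fg$ current). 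A short check with Corollary~\ref{cor:HBP2} then shows $\mu_{a,(0)}O$ is again a Higgs branch primary: it has the same $(r,j,q)$ as $O$, so $j=\tfrac q2$, and $Q^+_{(0)}(\mu_{a,(0)}O)=0$, $Q^-_{(1)}(\mu_{a,(0)}O)=0$ follow from $Q^+_{(0)}O=Q^-_{(1)}O=0$ upon moving $Q^\pm$ past $\mu_{a,(0)}$, using $(\pd M_a)_{(0)}=0$ (a total $\pd_z$-derivative has vanishing raviolo residue) and $Q^-_{(n)}\mu_a=0$ for $n\ge0$.

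\emph{Step 2: the superpartner.} Since $\Psi_O=Q^-_{(0)}O$ and $Q^-_{(0)}$ graded-commutes with $\mu_{a,(n)}$, Step 1 gives $\mu_{a,(n)}\Psi_O=\pm Q^-_{(0)}(\mu_{a,(n)}O)=0$ for all $n\ge1$; hence $\mu_a(z)\Psi_O(w)\sim\Omega^0_{z-w}(\mu_{a,(0)}\Psi_O)(w)$ and $\Psi_O$ is a current algebra primary. Finally $\mu_{a,(0)}\Psi_O=\pm Q^-_{(0)}(\mu_{a,(0)}O)=\pm\Psi_{\mu_{a,(0)}O}$ by the defining OPE of the superpartner applied to the Higgs branch primary $\mu_{a,(0)}O$ from Step 1; thus $O\mapsto\Psi_O$ intertwines the two $\fg$-actions, so $\Psi_O$ lies in the same representation as $O$.

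The only genuine work will be assembling the correct raviolo mode-commutator identities and the relation $(\pd a)_{(0)}=0$, and carefully tracking Koszul signs through the graded commutators of the $R$-odd operators $Q^\pm_{(0)}$; with these in hand, the argument is pure bookkeeping of spins and $S$-charges against the BPS bound, all of which is supplied by the raviolo OPE/associativity formalism of \cite{GarnerWilliams}.
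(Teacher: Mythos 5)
Your proposal is correct and follows essentially the same route as the paper's proof: the BPS bound kills $\mu_{a,(n)}O$ for $n\geq 1$ since these coefficients have spin $j-n$ but unchanged $S$-charge $q=2j$, and the statement for $\Psi_O=Q^-_{(0)}O$ follows from the regularity of $Q^-(z)\mu_a(w)$ combined with the mode-commutator (associativity) identities. Your extra verification that $\mu_{a,(0)}O$ is again a Higgs branch primary, and the sign bookkeeping via $[Q^\pm_{(0)},\mu_{a,(n)}]$, simply fill in details the paper leaves implicit.
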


\begin{proof}
	The first assertion follows from the BPS bound: the coefficient of $\Omega^n_{z-w}$ in the OPE of $\mu_a$ and $O$ has spin $j-n$ and $S$-charge $q$ and therefore must vanish unless $n = 0$, whence $O$ is a current algebra primary. The fact that $\Psi_O$ is a current algebra primary transforming in the same representation as $O$ follows from the regularity of the OPE $\mu_a Q^- \sim 0$ together with associativity.
\end{proof}

If we apply this lemma to the Higgs branch primaries $M_a$ whose superpartners $\mu_a$ generate the symmetry, we see that the $\mu_a$ are themselves primaries and hence they must have a vanishing level, i.e. Higgs branch flavor symmetries are never anomalous.

\begin{corollary}
	Let $\CV$ be an $N=2$ superconformal raviolo vertex algebra satisfying the BPS bound and suppose $\mu_a$ generates a $\fg$ Higgs branch flavor symmetry, then its level must vanish.
\end{corollary}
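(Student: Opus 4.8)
The plan is to obtain the vanishing of the level by feeding the Higgs branch primaries $M_a$ themselves back into Lemma \ref{lem:HBsymmprimary}. By the very definition of a Higgs branch flavor symmetry, saying that the fields $\mu_a$ generate a $\fg$ Higgs branch flavor symmetry means that each $\mu_a$ is the superpartner $\Psi_{M_a}$ of a Higgs branch primary operator $M_a$ (of $R$-charge $1$, spin $\tfrac12$, and $S$-charge $1$). Thus the $M_a$ are themselves Higgs branch primaries of $\CV$, and Lemma \ref{lem:HBsymmprimary} applies to them.

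First, I would apply Lemma \ref{lem:HBsymmprimary} with the Higgs branch primary $O = M_a$. It tells us both that $M_a$ is a primary for the $\fg$ current algebra generated by the $\mu_b$ and, crucially, that its superpartner $\Psi_{M_a} = \mu_a$ is also a current algebra primary, transforming in the same $\fg$-representation as $M_a$ --- namely the adjoint representation, since by definition the $\mu_b$ generate the symmetry and hence close among themselves under the current-algebra action with the structure constants of $\fg$. In particular, each current $\mu_a$ is a primary for its own current algebra.

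Second, I would compare this with the general shape of the OPE of two currents generating a Hamiltonian symmetry. The coefficient of $\Omega^m_{z-w}$ in $\mu_a(z)\mu_b(w)$ carries spin $1-m$, so it vanishes for $m \geq 2$ by the BPS bound; together with Definition 4.3.2 of \cite{GarnerWilliams}, this forces the OPE to take the form
\[
  \mu_a(z)\,\mu_b(w) \;\sim\; \Omega^1_{z-w}\,k_{ab} \;+\; \Omega^0_{z-w}\, f_{ab}{}^{c}\,\mu_c(w),
\]
where the spin-$0$ coefficient $k_{ab}$ is, by definition, the level of the symmetry. Being a current algebra primary means precisely that the OPE of $\mu_b$ with the currents $\mu_a$ has only the simple-pole term $\Omega^0_{z-w}$; comparing with the display forces $k_{ab} = 0$ for all $a, b$, i.e. the level vanishes.

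I do not expect a serious obstacle: the substance is Lemma \ref{lem:HBsymmprimary} applied to the $M_a$, plus the elementary observation that a current which is primary for its own action can carry no level. The only points requiring care are conventional --- checking that "level" in Definition 4.3.2 of \cite{GarnerWilliams} is precisely the coefficient of $\Omega^1_{z-w}$ displayed above, and confirming that the representation produced by Lemma \ref{lem:HBsymmprimary} for $\mu_a$ is the adjoint, so that no nonzero level can hide inside the $\Omega^0_{z-w}$ channel. Both are immediate from the definitions recalled in the excerpt.
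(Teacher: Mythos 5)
Your proposal is correct and follows exactly the paper's route: the paper likewise applies Lemma \ref{lem:HBsymmprimary} to the Higgs branch primaries $M_a$, concludes that their superpartners $\mu_a$ are current algebra primaries, and reads off the vanishing of the level from the absence of the $\Omega^1_{z-w}$ term. Your spelled-out comparison of the primary condition with the general current--current OPE is just a more explicit rendering of the same one-line argument.
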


\subsection{Examples of $N=2$ superconformal raviolo vertex algebras}
\label{sec:SCexamples}
In this final subsection we consider some simple examples of $N=2$ superconformal raviolo vertex algebras. The first examples come from $HT$-twisted free three-dimensional $\CN=4$ theories, those of a free hypermultiplet and a free vector multiplet. The final example is somewhat surprising: we find that the $HT$ twist of a free chiral multiplet (of $R$-charge $\frac{1}{2}$) has an $N=2$ superconformal structure.%
\footnote{Physically, part of this structure arises from the fact that a free chiral multiplet can be deformed a mass term. In the $HT$ twist, this same data can be used to deform to perturbative Chern-Simons theory by adding a term to the Lagrangian of the form $\eta \pd \eta$. This latter deformation doesn't make sense in the physical theory, but does in the ``twisted'' version of the theory described in \cite{AganagicCostelloMcNamaraVafa, CostelloDimofteGaiotto}.} %

\subsubsection{Free hypermultiplet}
Our first example describes local operators in the $HT$ twist of a free hypermultiplet, namely the raviolo vertex algebra $FH = FC^{(1/4)}_{1/2}{}^{\otimes 2}$. We denote the generating fields $Z^\alpha, \psi_\alpha$, $\alpha = 1,2$, and only singular OPEs of these generators are
\be
	Z^\alpha(z) \psi_\beta(w) \sim \Omega^0_{z-w} \delta^\alpha_\beta
\ee
with the remaining OPEs being regular. As described in Section  4.1 of \cite{GarnerWilliams}, this raviolo vertex algebra is conformal with stress tensor
\be
	\Gamma = \tfrac{3}{4}\norm{\psi_\alpha \pd Z^\alpha} - \tfrac{1}{4}\norm{Z^\alpha \pd \psi_\alpha}
\ee
and has a $\fgl(2)$ current subalgebra generated by $\norm{\psi_\alpha Z^\beta}$; we will identify the superconformal current as the diagonal generator
\be
	\sigma = \tfrac{1}{2}\norm{\psi_\alpha Z^\alpha}
\ee
Thus, $Z^\alpha$ is a (bosonic) field of $R$-charge $\tfrac{1}{2}$, spin $\tfrac{1}{4}$, and $S$-charge $\tfrac{1}{2}$; $\psi_\alpha$ is a (fermionic) field of $R$-charge $\tfrac{1}{2}$, spin $\tfrac{3}{4}$, and $S$-charge $-\tfrac{1}{2}$. It is straightforward to verify that the fields
\be
	Q^+ = \tfrac{1}{2}\epsilon_{\beta \alpha}\norm{Z^\alpha \pd Z^\beta} \qquad Q^- = - \tfrac{1}{2} \epsilon^{\beta \alpha}\norm{\psi_\alpha \psi_\beta}
\ee
have the necessary OPEs to generate the $N=2$ superconformal raviolo vertex algebra at central charge $0$, where $\epsilon_{\alpha \beta}$ is the Levi-Civita tensor.%
\footnote{We note that the raviolo vertex subalgebra generated by these fields is actually isomorphic to a quotient of $\SVir^{N=2}$. For example, the normally-ordered product $\norm{Q^- Q^-}$ vanishes in $FH$ due to the fermionic nature of $\psi_\alpha$, but not in $\SVir^{N=2}$. If we consider $N > 1$ copies of this raviolo vertex algebra, we instead find a quotient where the normally-ordered product of $2N$ copies of $Q^-$ must vanish, but no lower power does.} %

The fact that all local operators are realizable as (sums of) normally-ordered products of the generating fields and their derivatives, and the fact that these generating fields satisfy the BPS bound, implies the BPS bound is satisfied for all of $FH$. Moreover, the only operators $O$ saturating the BPS bound $j = \frac{|q|}{2}$ are the trivial local operator $1$, the $Z^\alpha$, and normally-ordered products thereof. There are no operators satisfy $j = - \frac{q}{2}$ except for the identity operator $1 = |0\rangle$, so this is the only Coulomb branch primary operator.

The OPEs of the $Q^\pm$ with the generating fields take the following form: the action on the bosons is given by
\be
	Q^+(z) Z^\alpha(w)  \sim 0 \qquad Q^-(z) Z^\alpha(w) \sim \Omega^0_{z-w} \big(\epsilon^{\alpha \beta}\psi_\beta(w)\big)
\ee
and the action on the fermions is give by
\be
\begin{aligned}
	Q^+(z) \psi_\alpha(w) & \sim \tfrac{1}{2} \Omega^1_{z-w} \big(Z^\beta(w) \epsilon_{\beta \alpha}\big) + \Omega^0_{z-w} \big(\pd Z^\beta(w) \epsilon_{\beta \alpha}\big)\\
	 Q^-(z) \psi_\alpha(w) &\sim 0
\end{aligned}
\ee
It follows that $Z^\alpha$ is a Higgs branch primary operator with superpartner $\Psi_{Z^\alpha} = \psi^\alpha := \epsilon^{\alpha \beta} \psi_\beta$.

\subsubsection{Perturbative free $\CN = 4$ vector multiplet}
The next example we consider describes the algebra of local operators in perturbative pure abelian gauge theory. We consider the raviolo vertex algebra $FV^{\text{pert}} = FC^{(1)}_1 \otimes FC^{(1/2)}_1$; we denote the bosonic generating field of $FC^{(1)}_1$ (resp. $FC^{(1/2)}_1$) $b$ (resp. $\phi$) and the fermionic generating field $c$ (resp. $\lambda$). The singular OPEs of these generating fields are
\be
	b(z) c(w) \sim \Omega^0_{z-w} \qquad \phi(z) \lambda(w) \sim \Omega^0_{z-w}
\ee
with all the remaining OPEs being regular.

This is a conformal raviolo vertex algebra with stress tensor given by
\be
	\Gamma = - \norm{b \pd c} + \tfrac{1}{2} \norm{\lambda \pd \phi} - \tfrac{1}{2} \norm{\phi \pd \lambda}
\ee
We take the superconformal current to be
\be
	\sigma = - \norm{\lambda \phi}
\ee
with the remaining generators given by
\be
	Q^+ = \norm{\lambda \pd c} \qquad Q^- = \norm{b\phi}
\ee
Again, it is straight-forward to verify these fields have the necessary OPEs to realize the $N=2$ superconformal algebra at central charge $0$.%
\footnote{As with the previous example, we find a quotient of $\SVir^{N=2}$ where the normally-ordered product $\norm{Q^+{}^2}=0$. This quotient of $\SVir^{N=2}$ is exchanged with the above by the $\Z_2$ mirror automorphism.} %

Note that $\phi$ has $S$-charge $-1$; $b,c$ have $S$-charge 0; and $\lambda$ has $S$-charge 1; hence the BPS bound is satisfied and the operators $\phi$, $\lambda$, and $c$ saturate the bound. The OPEs of the superconformal generators $Q^\pm$ and the generating fields are given by
\be
\begin{aligned}
	Q^+(z) c(w) & \sim 0 \quad & Q^+(z) b(w) & \sim \Omega^1_{z-w} \lambda(w) + \Omega^0_{z-w} \pd \lambda(w)\\
	Q^-(z) c(w) & \sim \Omega^0_{z-w} \phi(w) \quad & Q^-(z) b(w) & \sim 0\\
\end{aligned}
\ee
and
\be
\begin{aligned}
	Q^+(z) \phi(w) & \sim \Omega^0_{z-w} \pd c(w) \quad & Q^+(z) \lambda(w) & \sim 0\\
	Q^-(z) \phi(w) & \sim 0 \quad & Q^-(z) \lambda(w) & \sim \Omega^0_{z-w} b(w)\\
\end{aligned}
\ee
We see that there are no Coulomb branch primary operators except $1$. The fermions $c$ and $\lambda$ are Higgs branch primary operators with superpartners $\phi$ and $b$, respectively; both have Higgs-branch $R$-charge $1$.

\subsubsection{Free $\CN=4$ vector multiplet}
We can also consider the full, non-perturbative algebra of local operators in pure abelian gauge theory. The relevant raviolo vertex algebra replaces $FC^{(1)}_1$ by the raviolo lattice vertex algebra $\CV_\Z$ described in Section 5.2 of \cite{GarnerWilliams}; this algebra removes $c$, keeping $\pd c = \nu$, and extends the resulting algebra by monopole operators $V_\pm$, with $b = \norm{V_+ \pd V_-}$.%
\footnote{Roughly, local operators in perturbation theory are realized as derived invariants with respect to infinitesimal gauge transformations, i.e. some version of Chevalley-Eilenberg cohomology. True local operators are realized as derived invariants with respect to \emph{finite} gauge transformations; the proper way to do compute these latter derived invariants does not involve a Chevalley-Eilenberg ghost $c$ for the global part of the gauge group, hence we remove $c$ but not its derivatives. See, e.g., Section 6.2.1 of \cite{CostelloDimofteGaiotto} for more details on why one should only consider derivatives of $c$.} %
This non-perturbative algebra is generated by bosons $V_\pm(z)$ and $\phi(z)$ as well as fermions $\nu(z)$ and $\lambda(z)$. 

The $N=2$ superconformal symmetry takes the same form as above so long as we identify $\pd c = \nu$ and $b = \norm{V_+ \pd V_-}$. In particular, we find the following OPEs with the bosonic generators:
\be
\begin{aligned}
	Q^+(z) \nu(w) & \sim 0 \quad & Q^+(z) V_\pm(w) & \sim \pm \Omega^0_{z-w} \norm{\lambda V_\pm}(w)\\
	Q^-(z) \nu(w) & \sim \Omega^1_{z-w} \phi(w) + \Omega^0_{z-w} \pd \phi(w) \quad & Q^-(z) b(w) & \sim 0\\
\end{aligned}
\ee
and
\be
\begin{aligned}
	Q^+(z) \phi(w) & \sim \Omega^0_{z-w} \nu(w) \quad & Q^+(z) \lambda(w) & \sim 0\\
	Q^-(z) \phi(w) & \sim 0 \quad & Q^-(z) \lambda(w) & \sim \Omega^0_{z-w} \norm{V_+ \pd V_-}(w)\\
\end{aligned}
\ee

The only non-trivial Higgs branch primary operator is $\lambda$, with superpartner $b$; no other operators satisfy $j = \frac{q}{2}$. The monopole operators $V_\pm$ all have spin $0$ and $S$-charge $0$, hence saturate the BPS bound. The above OPEs imply $V_\pm$ are Coulomb branch primary operators, with superpartners $\pm\norm{\lambda V_\pm}$. The removal of $c$ implies that $\phi$ is also a Coulomb branch primary operator (with superpartner $\nu$).

We note that $\nu$ is an abelian current generating the \emph{topological flavor symmetry} of the raviolo lattice algebra $\CV_\Z$ and it's a superpartner of the Coulomb branch primary operator $\phi$, we see that it generates a $\fgl(1)$ Coulomb branch flavor symmetry. 

\subsubsection{Free chiral multiplet}
The final example we consider does not arise from the $HT$-twist of a three-dimensional $\CN=4$ theory. In a sense, it is a fermionic/orthogonal counterpart of the symplectic/bosonic free hypermultiplet. We consider $FC^{(3/4)}_{1/2}$ with generating fields $X, \eta$ whose OPE takes the form
\be
	X(z) \eta(w) \sim \Omega^0_{z-w}
\ee
The stress tensor is given by
\be
	\Gamma = \tfrac{1}{4} \norm{\eta \pd X} - \tfrac{3}{4} \norm{X \pd \eta}
\ee
and we take the superconformal current to be $\sigma = -\tfrac{1}{2} \norm{\eta X}$. The remaining superconformal generators are
\be
	Q^+ = \tfrac{1}{2}\norm{\eta \pd \eta} \qquad Q^- = \tfrac{1}{2}\norm{X^2}
\ee
and together these generate a copy of the $N=2$ superconformal raviolo vertex algebra at central charge $0$; we will denote this $N=2$ superconformal raviolo vertex algebra $SFC$.%
\footnote{These fields generate the same quotient of $\SVir^{N=2}$ found in $FV$ and $FV^{\text{pert}}$.} %

The generator $X$ (resp. $\eta$) has $R$-charge $\frac{1}{2}$ (resp. $\frac{1}{2}$), $S$-charge $-\frac{1}{2}$ (resp. $\tfrac{1}{2}$) and spin $\frac{3}{4}$ (resp. $\tfrac{1}{4}$); we see that the BPS bound is satisfied, and $\eta$ saturates the BPS bound. There are no operators with $j = -\frac{q}{2}$ other than the trivial operator $1$, hence the only Coulomb branch primary operator is $1$. The OPEs of the superconformal generators $Q^\pm$ and the generating fields are given by
\be
\begin{aligned}
	Q^+(z) X(w) & \sim \tfrac{1}{2} \Omega^1_{z-w} \eta(w)	+ \Omega^0_{z-w} \pd \eta(w) \qquad & Q^+(z) \eta(w) & \sim 0\\
	Q^-(z) X(w) & \sim 0 \qquad & Q^-(z) \eta(w) & \sim \Omega^0_{z-w} X(w)\\
\end{aligned}
\ee
from which we see that $\eta$ is a Higgs branch primary operator, with superpartner $X$.

\subsection{Raviolo superconformal algebras with more supersymmetry}
\label{sec:moresusy}

So far we have focused on superconformal raviolo vertex algebras associated to $HT$ twists of three-dimensional theories with $N=4$ supersymmetry.
Most importantly for this paper, the object that we call the $N=2$ raviolo superconformal algebra $SVir_{N=2}$ arises as a symmetry of the $HT$ twist of a $\cN=4$ theory.
In this section we comment on superconformal algebras arising in theories with different amounts of supersymmetry.

\subsubsection{$N=1$ superconformal algebra}

We start with an example with less supersymmetry than in the other parts of the paper.
Let $Vir$ be the raviolo Virasoro algebra as above with bosonic generators $\{G_m\}_{m \in \Z}$ and fermionic generators $\{\Gamma_m, \xi\}_{m \in \Z_{\geq 0}}$.
We add to this fermionic generators $\{\theta_m\}$ and bosonic generators $\{Q_m\}$ satisfying
\be
\begin{aligned}[]
	[G_m, \theta_{n}] & = \big(\tfrac{1}{2}m -n\big)\theta_{m+n-1}\\
	[G_m, Q_n] & = \begin{cases}
		0 & n + 1 < m\\
		\big(\tfrac{1}{2}m+n+1\big)Q_{n-m+1} & n +1 \geq m\\
	\end{cases}
\end{aligned}
\ee
\be
\begin{aligned}[]
	[\Gamma_m, \theta_{n}] &= \begin{cases}
		0 & n > m+1\\
		-\big(\tfrac{1}{2}(m+1)+n\big)Q_{m-n+1} & n \leq m+1\\
	\end{cases}\\
	[\Gamma_m, Q_{n}] & = 0
\end{aligned}
\ee
\be
\begin{aligned}[]
	[\theta_m, \theta_n] = 2G_{m+n} \qquad [Q^\pm_m, Q^\mp_n] = 0\\
	[\theta_m, Q_n] = \begin{cases} 
		0 & n+2 < m\\
		-\frac{m(m-1)}{3}\xi & n+2 = m\\
		-2\Gamma_{n-m} & n \geq m\\
	\end{cases} \hspace{-0.25cm}
\end{aligned}
\ee
We call this Lie algebra the \emph{raviolo $N=1$ superconformal algebra} and denote it $SVir^{N=1}$.

We can use this Lie algebra to construct a (universal) $N=1$ superconformal raviolo vertex algebra via induction, just as we did above:
\be
	\SVir^{N=1} = U SVir^{N=1} \otimes_{USVir^{N=1}_+} \C
\ee
As $\xi$ is central and acts as $0$ on $\C$, it follows that $\xi$ acts as $0$ on all of $\SVir^{N=1}$; the resulting raviolo vertex algebra is analogous to the super Virasoro vertex algebra at vanishing central charge.
As above there is a universal version $\SVir^{N=1}_{univ}$ which incorporates the central term $\xi$.
We can realize $\SVir^{N=1}$ as the quotient of $\SVir^{N=1}_{univ}$ by vectors proportional to $\xi$. 

In addition to the raviolo Virasoro field $\Gamma(z)$, the image of the linear state $Q_0 \otimes 1$ corresponds to the bosonic field of spin $\tfrac{3}{2}$:
\be
	Q (z) = \sum_{n \geq 0} z^{n} Q_{n} + \Omega^n_z \theta_{n}\,.
\ee
As in the $N=2$ case it is immediate to see that $\SVir^{N=1}$ and $\SVir^{N=1}_{univ}$ are raviolo vertex algebras with spin grading.
The OPEs of bosonic field $Q(z)$ transforms as with the stress tensor are
\be
	\Gamma(z) Q(w) \sim \tfrac{3}{2} \Omega^1_{z-w} Q(w) + \Omega^0_{z-w} \pd Q(w) .
\ee
corresponding to the fact that $Q$ is a raviolo Virasoro primary of spin $\frac{3}{2}$. The OPE of $Q$ with itself is
\be
	Q(z) Q(w) \sim \Omega_{z-w}^2 (-2\xi/3) - \Omega_{z-w}^0 2\Gamma(w) .
\ee

There is a morphism of raviolo vertex algebras 
\beqn
	\SVir^{N=1}_{univ} \to \SVir^{N=2}_{univ}
\eeqn
which is the identity on the generators $G_m, \Gamma_m$ and maps $\theta_m \mapsto \theta_m^+ + \theta_m^-$ and $Q_m \mapsto Q_m^+ + Q_m^-$.

We show in \cite{HTenhance} that $\SVir^{N=1}$ is present in the algebra of local operators in the $HT$ twist of any $\cN=3$ superconformal Chern--Simons-matter theory. More generally, we expect this raviolo vertex algebra is realized as symmetries of the $HT$ twist of theories with $\cN = 3$ supersymmetry. We note that the bosonic generators $G_{0}$, $G_{1}$, and $G_2$ together with the fermionic generators $\theta_0$, $\theta_1$ generate a copy of $\lie{osp}(1|2)$, identified with the $HT$-twist of the $\CN=3$ superconformal algebra $\lie{osp}(3|4)$.

\subsubsection{$N=3$ superconformal algebra}
We now add more supersymmetry. Rather than write down explicit commutators of the resulting Lie superalgebra, we will instead write down the corresponding OPEs; the commutators can be obtained by computing residues.

We add to the raviolo Virasoro field $\Gamma(z)$ (at central charge $\xi$) fermionic fields $\sigma_A(w)$ linear in $A \in \lie{sl}(2)$, i.e. $\sigma_{A+B} = \sigma_A + \sigma_B$. These fields are raviolo Virasoro primaries of spin 1
\be
	\Gamma(z) \sigma_A(w) \sim \Omega^1_{z-w} \sigma_A(w) + \Omega^0_{z-w} \pd \sigma_A(w)
\ee
and generate an $\lie{sl}(2)$ raviolo current algebra at level $\xi/3$
\be
	\sigma_A(z) \sigma_B(w) \sim \Omega^1_{z-w}  \big(\xi/3 \Tr(AB)\big) + \Omega^0_{z-w} \sigma_{[A,B]}(w) \,.
\ee
We use $\Tr$ to denote the trace in the standard representation of $\lie{sl}(2)$.
We then add bosonic fields $Q_A$, also linear in $A \in \lie{sl}(2)$, that are raviolo Virasoro primaries of spin $\frac{3}{2}$
\be
	\Gamma(z) Q_A(w) \sim \tfrac{3}{2} \Omega^1_{z-w} Q_A(w) + \Omega^0_{z-w} \pd Q_A(w)
\ee
and transform as raviolo current algebra primaries transforming in the adjoint representation
\be
	\sigma_A(z) Q_B(w) \sim \Omega^0_{z-w} Q_{[A,B]}(w) \,.
\ee
Finally, the OPE of these bosonic generators takes the following form:
\be
\begin{aligned}
	Q_A(z) Q_B(w) & \sim -\Omega^2_{z-w} \big(\Tr(AB) \xi/3\big) - \Omega^1_{z-w} \sigma_{[A,B]}(w)\\
	& \qquad - \Omega^0_{z-w} \bigg(\Tr(AB) \Gamma(w) + \tfrac{1}{2}\partial \sigma_{[A,B]}(w)\bigg) \,.
\end{aligned}
\ee

We call underlying Lie superalgebra the \emph{raviolo $N=3$ superconformal algebra} and denote it $SVir^{N=3}$.
The universal $N=3$ superconformal raviolo vertex algebra is
\be
	\SVir^{N=3}_{univ} = U SVir^{N=3} \otimes_{USVir^{N=3}_+} \C[\xi]
\ee
We denote by $\SVir^{N=3}$ the quotient of $\SVir^{N=3}_{univ}$ by vectors proportional to $\xi$. 
There are many maps from $\SVir^{N=2}$ to $\SVir^{N=3}$, parameterized by an choice of embedding the $\lie{gl}(1) \hookrightarrow \lie{sl}(2)$ describing the image of $\sigma$ in the above $\fsl(2)$ current algebra.

This raviolo vertex algebra encodes the symmetries of the $HT$ twist of theory with $\cN=5$ supersymmetry.
It follows from our work in \cite{HTenhance}, that $\SVir^{N=3}$ is present in the algebra of local operators of the $HT$ twist of any $\cN=5$ superconformal Chern--Simons-matter theory. We note that the bosonic generators $G_0$, $G_1$, $G_2$ and $S_{A,0}$ together with the fermions $\theta_{A,0}$ and $\theta_{A,1}$ generate a copy of $\lie{osp}(3|2)$, identified as the $HT$-twist of the $\CN=5$ superconformal algebra $\lie{osp}(5|4)$.

\subsubsection{(Big) $N=4$ superconformal algebra}

The next example of superconformal algebra is realized from twisted three-dimensional $\CN=6$ supersymmetry. We will again write down OPEs of the generating raviolo fields instead of commutators.

The most general form depends on a single complex parameter $p \in \C$. In addition to the raviolo Virasoro field $\Gamma(z)$ of central charge $\xi$, there are two commuting $\lie{sl}(2)$ raviolo currents $\sigma^\pm_A$
\be
\begin{aligned}
	\sigma^\pm_A(z) \sigma^\pm_B(w) & \sim \Omega^1_{z-w} \big(\kappa^\pm \Tr(AB)\big) + \Omega^0_{z-w} \sigma^\pm_{[A,B]}(w)\\ \sigma^\pm_A(z) \sigma^\mp_B(w) & \sim 0
\end{aligned}
\ee
as well as an abelian raviolo current $\upsilon$ commuting with the two of them
\be
	\upsilon(z) \upsilon(w) \sim \Omega^1_{z-w} \kappa \qquad  \sigma^\pm_A(z) \upsilon(w) \sim 0
\ee
where the central generators $\kappa$, $\kappa^\pm$, and $\xi$ satisfy
\be
	\kappa^\pm = (1\pm p)\kappa \qquad \xi = (1-p^2) \kappa
\ee
All of these generators are raviolo Virasoro primaries of spin $1$. The remaining generators are bosonic and come in two types: there are the bosonic fields $Q_{\lambda}(z)$ and $P_{\lambda}(w)$ linear in $\lambda \in \C^2_+ \otimes \C^2_-$ that are raviolo Virasoro primaries of spins $\frac{3}{2}$ and $\frac{1}{2}$, respectively
\be
\begin{aligned}
	\Gamma(z) Q_\lambda(w) & \sim \tfrac{3}{2} \Omega^1_{z-w} Q_\lambda(w) + \Omega^0_{z-w} \pd Q_\lambda(w)\\
	\Gamma(z) P_\lambda(w) &\sim \tfrac{1}{2} \Omega^1_{z-w} P_\lambda(w) + \Omega^0_{z-w} \pd P_\lambda(w)
\end{aligned}
\ee
The operators $P_\lambda(z)$ are current algebra primaries transforming in the representation $(\C^2_+ \otimes \C^2_-)_0$ of $\lie{sl}(2)_+ \oplus \lie{sl}(2)_- \oplus \lie{gl}(1)$, where $\C^2_\pm$ is the standard/fundamental representation of $\lie{sl}(2)_\pm$ and the subscript ${}_0$ indicates $\lie{gl}(1)$ weight $0$:
\be
	\sigma^\pm_A(z) P_\lambda(w) \sim \Omega^0_{z-w} P_{A^\pm \lambda}(w) \qquad \upsilon(z) P_\lambda(w) \sim 0
\ee
where $A^+ = A \otimes \id_{\C^2_-}$ and $A^- = \id_{\C^2_+} \otimes A$. The generators $Q_\lambda(z)$ are no longer current algebra primaries, but are modulo $P_\lambda$:
\be
\begin{aligned}
	\sigma^\pm_A(z) Q_\lambda(w) & \sim \Omega^1_{z-w} \bigg(\mp\tfrac{1}{4}(1\pm p) P_{A^\pm \lambda}(w)\bigg) + \Omega^0_{z-w} Q_{A^\pm \lambda}(w)\\
	\upsilon(z) Q_\lambda(w) & \sim \Omega^1_{z-w} P_{\lambda}
\end{aligned}
\ee

To express the fermionic OPEs, we need some notation. For $\lambda_1, \lambda_2 \in \C^2_+ \otimes \C^2_-$, we let $(\lambda_1, \lambda_2)$ denote the $\fsl(2)_\pm$-invariant pairing of $\C^2_+ \otimes \C^2_-$ with itself:
\be
	(\lambda_1, \lambda_2) = \epsilon_{\alpha_1 \alpha_2} \epsilon_{\dot{\alpha}_1 \dot{\alpha}_2} \lambda_1^{\alpha_1 \dot{\alpha}_1}\lambda_2^{\alpha_2 \dot{\alpha}_2}
\ee
We similarly let $[\lambda_1, \lambda_2]_+$ and $[\lambda_1, \lambda_2]_-$ be the two traceless matrices with matrix elements
\be
	\epsilon_{\dot{\alpha}_1 \dot{\alpha}_2} (\delta^{\alpha}{}_{\alpha_1}\epsilon_{\alpha_2 \beta} + \delta^{\alpha}{}_{\alpha_2}\epsilon_{\alpha_1 \beta}) \lambda_1^{\alpha_1 \dot{\alpha}_1}\lambda_2^{\alpha_2 \dot{\alpha}_2}
\ee
and
\be
	\epsilon_{\alpha_1 \alpha_2} (\delta^{\dot{\alpha}}{}_{\dot{\alpha}_1} \epsilon_{\dot{\alpha}_2 \dot{\beta}} + \delta^{\dot{\alpha}}{}_{\dot{\alpha}_2} \epsilon_{\dot{\alpha}_1 \dot{\beta}}) \lambda_1^{\alpha_1 \dot{\alpha}_1}\lambda_2^{\alpha_2 \dot{\alpha}_2}
\ee
respectively. The OPEs of these bosonic generators then take the following form. First, the OPEs involving at least one copy of $P_\lambda$ take the form
\be
	P_{\lambda_1}(z) P_{\lambda_2}(w) \sim \Omega^0_{z-w} \big(\tfrac{8}{3}(\lambda_1, \lambda_2)\kappa\big)
\ee
and
\be
	P_{\lambda_1}(z) Q_{\lambda_2}(w) \sim \Omega^0_{z-w} \bigg(-\sigma^+_{[\lambda_1, \lambda_2]_+}(w) + \sigma^-_{[\lambda_1, \lambda_2]_-}(w) - \tfrac{1}{2}(\lambda_1, \lambda_2) \upsilon(w)\bigg)
\ee
Finally, the OPE of $P_{\lambda_1}$ and $P_{\lambda_2}$ is given by
\be
\begin{aligned}
	Q_{\lambda_1}(z) Q_{\lambda_2}(w) &\\ 
	& \hspace{-2cm} \sim -\Omega^2_{z-w} \bigg(\tfrac{1}{3}(\lambda_1, \lambda_2) \xi\bigg)\\
	& \hspace{-1.75cm} -\Omega^1_{z-w} \bigg(\tfrac{1}{2}(1-p)\sigma^+_{[\lambda_1, \lambda_2]_+}(w) + \tfrac{1}{2}(1+p)\sigma^-_{[\lambda_1, \lambda_2]_-}(w)\bigg)\\
	& \hspace{-1.75cm} - \Omega^0_{z-w} \bigg((\lambda_1, \lambda_2) \Gamma(w) + \tfrac{1}{4}(1-p)\pd\sigma^+_{[\lambda_1, \lambda_2]_+}(w) + \tfrac{1}{4}(1+p)\pd\sigma^-_{[\lambda_1, \lambda_2]_-}(w)\bigg)\\
\end{aligned}
\ee

We call underlying Lie superalgebra the \emph{raviolo big $N=4$ superconformal algebra} and denote it $SVir^{N=4}_p$.
The universal (big) $N=4$ superconformal raviolo vertex algebra is
\be
	\SVir^{N=4}_{p,univ} = U SVir^{N=4}_p \otimes_{USVir^{N=4}_{p,+}} \C[\kappa]
\ee
We denote by $\SVir^{N=4}_p$ the quotient of $\SVir^{N=4}_{p,univ}$ by vectors proportional to~$\kappa$. 
This is a raviolo analog of the big superconformal vertex algebra characterized in \cite{KV,STV}.

We note that the bosonic generators $G_0$, $G_1$, $G_2$ and $S^\pm_{A,0}$ together with the fermions $\theta_{\lambda,0}$ and $\theta_{\lambda,1}$ realize the exceptional Lie superalgebra $D(2,1;\frac{1-p}{1+p})$. In our work \cite{HTenhance}, we find that the special case $p = 0$ is realized as symmetries of a family of $\CN=6$ Chern-Simons-matter theories; of course $D(2,1;1) \simeq \lie{osp}(4|2)$, which we identify with the $HT$ twist of the $\CN=6$ superconformal algebra $\lie{osp}(6|4)$.

\subsubsection{An exceptional superconformal algebra}

The final example is realized from twisted three-dimensional $\cN=8$ supersymmetry.
This example is somewhat different in that there is no (shifted) central charge.

In addition to the raviolo Virasoro field $\Gamma(z)$ (of vanishing central charge), there is an $\lie{sl}(4) = \lie{so}(6)$ raviolo currents $\sigma_A (z)$, $A \in \lie{sl}(4)$, at level zero:
\beqn
\sigma_A(z) \sigma_B(w) \sim \Omega^0_{z-w} \sigma_{[A,B]} (w) . 
\eeqn
These fields are raviolo Virasoro primaries of the usual spin $1$.

The remaining generators are bosonic and come in two types.
First, there are bosonic fields $Q_B(z)$ labeled by a skew-symmetric $4 \times 4$ matrix $B$, each of which are raviolo Virasoro primaries of spin $\frac32$.
These obey the following self-OPE's:
\beqn
\begin{aligned}
	Q_A(z) Q_B(w) & \sim - \Omega^1_{z-w} \sigma_{(A \star B)_0} (w) \\
	& - \Omega^0_{z-w} \left( \tfrac{1}{2}\text{tr}(A \star B) \Gamma(w) + \tfrac12 \partial \sigma_{(A \star B)_0} (w) \right)
\end{aligned}
\eeqn
In this expression the skew-symmetric matrix $\star A$ is defined by $(\star A)_{IJ} = \frac12 \epsilon_{IJKL} A^{KL}$.
Also, $(A \star B)_0$ is the traceless part of the matrix $A \star B$.

The second type of bosonic field is denoted $P_C(z)$ which is labeled by a symmetric $4 \times 4$ matrix $C$, each of which are raviolo Virasoro primaries of spin $\frac12$.
They have a non-singular OPE with each other
\beqn
P_A (z) P_B(w) \sim 0
\eeqn
and the following OPE with the other bosonic fields
\beqn
Q_A(z) P_B(w) \sim -\tfrac12 \Omega_{z-w}^0 \sigma_{B \star A} (w) .
\eeqn

Finally, the OPEs of the bosonic fields with the current algebra are as follows. As mentioned above, both $P_A$ and $Q_A$ are raviolo Virasoro primaries (of spins $\tfrac{1}{2}$ and $\tfrac{3}{2}$, respectively). The fields $P_A$ are raviolo current algebra primaries (transforming in the representation $\Sym^2 \C^4$)
\beqn
	\sigma_A(z) P_B(w) \sim \Omega^0_{z-w} P_{AB + BA^T} (w).
\eeqn
but, as in $N = 4$, the $Q_A$ are only raviolo current algebra primaries (transforming in the representation $\bigwedge^2 \C^4$) modulo $P_A$
\beqn
	\sigma_A(z) Q_B(w) \sim \Omega^1_{z-w} P_{A B - B A^T} + \Omega^{0}_{z-w} Q_{AB + BA^T} (w). \\
\eeqn
Note that the antisymmetry of $B$ in the second equation implies $A B - B A^T$ is symmetric.

We call underlying Lie superalgebra the \emph{raviolo exceptional superconformal algebra} and denote it $EVir$.
The Lie superalgebra of positive modes of this superalgebra is the exceptional Lie superalgebra $E(1|6)$ classified in \cite{CKstructure}.
The universal exceptional superconformal raviolo vertex algebra is
\be
	\text{EVir}_{univ} = U EVir \slash UEVir_+ 
\ee

This is the raviolo analog of the vertex algebra called $CK_6$ defined in \cite{CK}.
In fact, the OPE's above are complete raviolo translations of the OPE's describing $CK_6$.
It was pointed out in \textit{loc. cit.} that part of the exceptional nature of the vertex algebra $CK_6$ (and the corresponding mode algebra) is that it does not admit a central charge.
For this reason, it is expected that $CK_6$ (or its annihilation subalgebra $E(1|6)$) does not appear as the symmetries of some two-dimensional conformal field theory.

In our work \cite{HTenhance}, we argue that $E(1|6)$, and its raviolo vertex algebra enhancement $\text{EVir}$, appear as the symmetries of twists of particular three-dimensional $\cN = 8$ superconformal field theories.
Indeed, we show that $\text{EVir}$ is realized as symmetries of the $HT$ twists of the so-called BLG \cite{BL1,BL2,Gustavsson} and rank 1 ABJM \cite{ABJM, ABJ} theories at levels $k=1,2$.

\section{Higgs and Coulomb Branches}
\label{sec:HC}
Based on the aforementioned analogy with two-dimensional superconformal QFT, as well as their relevance in three-dimensional $\CN=4$ QFT, we now discuss some important properties of Higgs and Coulomb branch primary operators. We will focus on Higgs branch primary operators in the following; analogous results hold for Coulomb branch primary operators.

\subsection{Higgs and Coulomb branch chiral rings}
\label{sec:chiralrings}
Our first result says that we can equip the vector space of Higgs branch primary operators with the structure of a commutative, associative algebra, where the product is given by collision, i.e. operator product together with taking the coincidence limit $z \to w$.

\begin{proposition}
	\label{prop:chiralOPE}
	Let $\CV$ be an $N=2$ superconformal raviolo vertex algebra satisfying the BPS bound. The OPE of two Higgs branch primary operators $O_1$ and $O_2$ is necessarily regular $O_1 O_2 \sim 0$ and, moreover, their normal ordered product $\norm{O_1 O_2}$ is also a Higgs branch primary operator, with superpartner
	\begin{equation*}
		\Psi_{\norm{\,O_1 O_2\,}} = \norm{\Psi_{O_1} O_2} + (-1)^{|O_1|} \norm{O_1 \Psi_{O_2}}
	\end{equation*}
\end{proposition}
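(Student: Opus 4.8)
The plan is to identify $\norm{O_1 O_2}$ as a Higgs branch primary through Corollary \ref{cor:HBP2}, which (under the BPS bound) says that an operator of spin $j$ and $S$-charge $q$ is a Higgs branch primary precisely when $j=\tfrac q2$ and it is annihilated by $Q^+_{(0)}$ and $Q^-_{(1)}$; regularity of the OPE and the superpartner formula will emerge from the same computation.

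First I would clear away the grading bookkeeping. By Lemma \ref{lem:HBP} and the BPS bound, each $O_i$ has spin $j_i=\tfrac{q_i}{2}\ge 0$, so $q_i\ge 0$. In the OPE $O_1(z)O_2(w)\sim\sum_{m\ge 0}\Omega^m_{z-w}(O_{1,(m)}O_2)(w)$ the coefficient $O_{1,(m)}O_2$ has $S$-charge $q_1+q_2\ge 0$ and spin $j_1+j_2-m-1$, which for $m\ge 0$ is strictly below $j_1+j_2=\tfrac{q_1+q_2}{2}$; the BPS bound forces $O_{1,(m)}O_2=0$, hence $O_1 O_2\sim 0$. The same count applied to $\norm{O_1 O_2}=O_{1,(-1)}O_2$ gives spin $j_1+j_2$ and $S$-charge $q_1+q_2$, i.e. $j=\tfrac q2$ with $q=q_1+q_2$, establishing the first hypothesis of Corollary \ref{cor:HBP2}.

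The substantive input is that in a raviolo vertex algebra the positive modes $A_{(n)}$, $n\ge 0$, act as graded derivations of the normally-ordered product --- part of the $1$-shifted Poisson vertex algebra structure, which I would cite from \cite{GarnerWilliams}. (If it has to be reproven: the would-be corrections in the non-commutative Wick identity are the brackets-of-brackets $(A_{(m)}B)_{(k)}C$ with $m,k\ge 0$, and these sit at $R$-charge $|A|_R+|B|_R+|C|_R-2$, one unit below $A_{(n)}\norm{BC}$, so they vanish after projecting the identity onto the appropriate $R$-degree.) Using that $O_i$ is a Higgs branch primary, so $Q^+_{(n)}O_i=0$ for $n\ge 0$ and $Q^-_{(n)}O_i=0$ for $n\ge 1$ while $Q^-_{(0)}O_i=\Psi_{O_i}$, the derivation property gives
\[
Q^+_{(0)}\norm{O_1 O_2}=\norm{(Q^+_{(0)}O_1)O_2}+(-1)^{|O_1|}\norm{O_1(Q^+_{(0)}O_2)}=0,
\]
\[
Q^-_{(1)}\norm{O_1 O_2}=\norm{(Q^-_{(1)}O_1)O_2}+(-1)^{|O_1|}\norm{O_1(Q^-_{(1)}O_2)}=0,
\]
so $\norm{O_1 O_2}$ is a Higgs branch primary. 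Its superpartner is $Q^-_{(0)}\norm{O_1 O_2}$, and the same derivation property gives
\[
Q^-_{(0)}\norm{O_1 O_2}=\norm{(Q^-_{(0)}O_1)O_2}+(-1)^{|O_1|}\norm{O_1(Q^-_{(0)}O_2)}=\norm{\Psi_{O_1}O_2}+(-1)^{|O_1|}\norm{O_1\Psi_{O_2}}.
\]

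The sign $(-1)^{|O_1|}$ needs care: it is the Koszul sign picked up when commuting the mode $Q^\pm_{(n)}$ ($n\ge 0$) past $O_1$, and that mode is odd relative to $Q^\pm$ since $\Omega^n_z$ carries cohomological degree one and therefore flips the $R$-charge parity. I expect the bulk of the work, once the derivation property (and the graded-commutativity and associativity of the normally-ordered product that make the collision product well defined) are granted, to be exactly this sort of sign-chasing; everything else is the degree count above.
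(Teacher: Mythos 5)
Your argument is correct and is essentially the paper's proof: the same BPS-bound degree count gives regularity of the OPE, and the action of the modes of $Q^\pm$ on $\norm{O_1 O_2}$ is computed from the raviolo Wick identity (Corollary 2.2.5 of \cite{GarnerWilliams}), whose absence of correction terms $(Q^\pm_{(m)}O_1)_{(k)}O_2$ you correctly justify on $R$-charge grounds (a point that genuinely matters here, since e.g.\ $(\Psi_{O_1})_{(0)}O_2$ need not vanish, so a naive import of the vertex-algebra Wick formula would spoil $Q^-_{(1)}\norm{O_1O_2}=0$). The only, harmless, variation is that you certify that $\norm{O_1O_2}$ is a raviolo Virasoro and current algebra primary via BPS saturation and Corollary \ref{cor:HBP2}, where the paper instead cites Propositions 4.3.4 and 4.4.4 of \cite{GarnerWilliams} and then reads off the full OPEs with $Q^\pm$.
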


\begin{proof}
	Denote the spins and $S$-charges of $O_1, O_2$ by $j_1, j_2$ and $q_1, q_2$ and write the singular terms of the OPE as
	\be
		O_1(z) O_2(w) \sim \sum_n \Omega^n_{z-w} (O_{1,(n)} O_2)(w)
	\ee
	The operator $(O_{1,(n)} O_2)$ has spin $j_1 + j_2-n-1$ and $S$-charge $q_1 + q_2$. Lemma \ref{lem:HBP} implies $q_1 = 2 j_1 \geq 0$ and $q_2 = 2 j_2 \geq 0$, and since $q_1 + q_2 = 2(j_1 + j_2) > 2 (j_1+j_2-n-1)$ for all $n \geq 0$, the BPS bound implies $(O_{1,(n)} O_2) = 0$ and so the OPE of $O_1$, $O_2$ is regular.
	
	To show that $\norm{O_1 O_2}$ is a Higgs branch primary operator, it suffices to check its OPEs with $Q^\pm$: Propositions 4.3.4 and 4.4.4 of \cite{GarnerWilliams} imply $\norm{O_1 O_2}$ is a raviolo Virasoro primary (of spin $j_1 + j_2$) and current algebra primary (of $S$-charge $q_1 + q_2$). It is straightforward to compute these OPEs using Corollary 2.2.5 of loc. cit.:
	\be
	\begin{aligned}
		Q^+(z) \norm{O_1 O_2}(w) & \sim 0\\
		Q^-(z) \norm{O_1 O_2}(w) & \sim \Omega^0_{z-w} \big(\norm{\Psi_{O_1} O_2}(w) + (-1)^{|O_1|} \norm{O_1 \Psi_{O_2}}(w)\big)
	\end{aligned}
	\ee
\end{proof}

We call this the \emph{Higgs branch chiral ring} of $\CV$ and denote it $A_H[\CV]$. Proposition 2.2.6 of \cite{GarnerWilliams} implies $A_H[\CV]$ is a commutative, associative algebra, and we view it as the ring of functions on an affine (possibly non-reduced, super) scheme that we call the \emph{Higgs branch} $\CM_H[\CV]$. Similarly, collision of Coulomb branch primary operators gives a second commutative, associative algebra we call the \emph{Coulomb branch chiral ring} $A_C[\CV]$, viewed as the ring of functions on the \emph{Coulomb branch} $\CM_C[\CV]$. These chiral rings are the superconformal raviolo vertex algebra analog of the chiral and twisted chiral rings of a 2d $\CN=(2,2)$ superconformal QFT.

It is convenient to define a \emph{Higgs branch $R$-charge} $R_B$ and \emph{Higgs branch spin} $J_B$ via 
\be
	R_B = R + \sigma_{(0)} \qquad J_B = J - \tfrac{1}{2}\sigma_{(0)}
\ee
We can think of this as modifying the original $R$-charge grading on $\CV$ by the $S$-charge together with shifting the stress tensor to $\Gamma_B = \Gamma + \frac{1}{2}\pd \sigma$.

The two-dimensional chiral rings have yet more structure: they have a natural degree $-1$ Poisson bracket, also called a Gerstenhaber bracket, cf. \cite{LianZuckerman} (see also \cite{WittenZwiebach, PenkavaSchwarz, WuZhu}). We now show that the Higgs branch chiral ring is similarly equipped with such a bracket. Given a fully-fledged chain-level version of raviolo vertex algebras, we expect this to be enhanced to a full $E_3$-structure, cf. \cite{descent}.

\begin{theorem}
	\label{thm:poissonHB}
	Let $\CV$ be an $N=2$ superconformal raviolo vertex algebra satisfying the BPS bound and let $O_1, O_2$ be Higgs branch primary operators, then
	\begin{equation*}
		\{O_1 , O_2\}(z) := \tfrac{1}{2}\oint_z \diff x \bigg(\Psi_{O_1}(x) O_2(z) - (-1)^{|O_1||O_2|} \Psi_{O_2}(x) O_1(z)\bigg)
	\end{equation*}
	defines a Poisson bracket on the Higgs branch chiral ring. This bracket has intrinsic Higgs branch $R$-charge $-2$.
\end{theorem}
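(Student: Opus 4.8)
The plan is to recognize this bracket as the raviolo incarnation of the Gerstenhaber--Lian--Zuckerman bracket on the chiral ring of a $2d$ $\CN=(2,2)$ SCFT, cf.\ \cite{LVWchiral, LianZuckerman}, and to read off each of its properties from the raviolo vertex algebra axioms of \cite{GarnerWilliams} together with the superconformal constraints of Section \ref{sec:HCprimary}. The first step is to unwind the contour integral: exactly as in the proof of Proposition \ref{prop:chiralOPE}, Lemma \ref{lem:HBP} and the BPS bound force the OPE $\Psi_{O_i}(x)O_j(z)$ to consist of the single singular term $\Omega^0_{x-z}(\Psi_{O_i,(0)}O_j)(z)$, so the raviolo residue simply extracts that coefficient and
\[
	\{O_1, O_2\} \;=\; \tfrac12\big((\Psi_{O_1,(0)}O_2) - (-1)^{|O_1||O_2|}(\Psi_{O_2,(0)}O_1)\big),
\]
a graded antisymmetrization of the descent bracket of \cite{OhYagi} with the superpartner inserted in the first slot. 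Graded antisymmetry $\{O_2, O_1\} = -(-1)^{|O_1||O_2|}\{O_1, O_2\}$ is then manifest; one checks, using OPE skew-symmetry, that the two orderings already agree up to this sign, so the antisymmetrization merely makes the property visible.

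Next I would show that the bracket takes values in $A_H[\CV]$ and compute its degree. Tracking quantum numbers, $(\Psi_{O_1,(0)}O_2)$ has spin $j_1 + j_2 - \tfrac12$ and $S$-charge $q_1 + q_2 - 1$; since $q_i = 2 j_i$ by Lemma \ref{lem:HBP}, this saturates the BPS bound, so Lemma \ref{lem:BPSboundprimaries} already makes $\{O_1, O_2\}$ a raviolo Virasoro primary and a superconformal current-algebra primary. By Corollary \ref{cor:HBP2} it then suffices to verify $Q^+_{(0)}\{O_1, O_2\} = 0$ and $Q^-_{(1)}\{O_1, O_2\} = 0$; both reduce, via the raviolo mode identities and associativity, to the $Q^{\pm}$-OPEs of $O_i$ and $\Psi_{O_i}$ in Lemma \ref{lem:HBP} --- using in particular $Q^+_{(0)}O_i = Q^-_{(1)}O_i = 0$, $Q^-_{(0)}O_i = \Psi_{O_i}$, the regularity $Q^-(z)\Psi_{O_i}(w) \sim 0$, and the identity $(\pd O_i)_{(0)} = 0$ coming from the action of $\pd_z$ on $\CK$. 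The superpartner then comes out as $\Psi_{\{O_1,O_2\}} = Q^-_{(0)}\{O_1,O_2\}$. For the degree: $\Psi_{O_i}$ has $R$-charge $r_i$ and $S$-charge $q_i - 1$, so $R_B(\Psi_{O_i}) = R_B(O_i) - 1$, while the $\Omega^0_{z-w}$ in the OPE carries $R$-charge $1$ and $S$-charge $0$; combining gives $R_B(\{O_1, O_2\}) = R_B(O_1) + R_B(O_2) - 2$, i.e.\ intrinsic Higgs branch $R$-charge $-2$ relative to the normal-ordered product of $A_H[\CV]$.

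The Leibniz rule $\{O_1, \norm{O_2 O_3}\} = \norm{\{O_1, O_2\} O_3} + (-1)^{\epsilon}\norm{O_2 \{O_1, O_3\}}$ I would deduce from the raviolo non-commutative Wick formula (equivalently, the Borcherds-type identities; Corollary 2.2.5 and Section 3 of \cite{GarnerWilliams}), which makes the descent operation $(\Psi_{O_1,(0)}\,\cdot\,)$ a derivation of the normal-ordered product; the usual correction terms drop out because the OPEs among Higgs branch primaries are regular (Proposition \ref{prop:chiralOPE}), and compatibility with the antisymmetrization again uses OPE skew-symmetry.

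The Jacobi identity is the crux, and I expect it to be the main obstacle. The plan is to expand $\{O_1, \{O_2, O_3\}\}$ and its two cyclic permutations into iterated descent operations $(\Psi_{O_1,(0)}(\Psi_{O_2,(0)}O_3))$ etc., reorganize them using the raviolo analogue of the Borcherds identity, and show that all potentially anomalous contributions cancel. The two structural inputs that should drive the cancellation are: (i) the BPS bound, which --- as throughout --- kills every term but the $\Omega^0_{z-w}$ one in each OPE occurring among the $O_i$ and their superpartners, collapsing the Borcherds identity to a small number of terms; and (ii) the superconformal OPEs of Section \ref{sec:ravSCA}, in particular the regular self-OPEs $Q^{\pm}(z)Q^{\pm}(w) \sim 0$, which let one treat $(\Psi_{O_i,(0)}\,\cdot\,)$ as essentially a graded derived bracket for the nilpotent operation building the superpartners. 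The genuinely raviolo subtlety is that the descent mode of $\Psi_{O_1} = Q^-_{(0)}O_1$ is \emph{not} literally the graded commutator $[Q^-_{(0)}, O_{1,(0)}]$ --- there is an extra sign, already visible in the free hypermultiplet --- so this relation must be tracked honestly via the raviolo Borcherds identities rather than inherited verbatim from the vertex algebra case. I therefore expect the principal difficulty to be the sign- and term-bookkeeping in that identity, which is more intricate than its vertex-algebra counterpart because of the asymmetry between the holomorphic modes attached to $z^{-m-1}$ and the topological descent modes attached to $\Omega^m_z$; the goal is to verify that, after this bookkeeping, everything reduces faithfully to the argument of \cite{LVWchiral}.
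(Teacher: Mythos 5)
Your proposal tracks the paper's proof closely on every point except the one you yourself flag as the crux. The well-definedness argument (the bracket saturates the BPS bound, so Lemma \ref{lem:BPSboundprimaries} makes it a Virasoro and current-algebra primary, and Corollary \ref{cor:HBP2} reduces everything to the $Q^\pm$ OPEs of $O_i$ and $\Psi_{O_i}$), the Higgs branch $R$-charge count, the skew-symmetry, and the Leibniz rule via Corollary 2.2.5 of \cite{GarnerWilliams} together with the regularity of OPEs among Higgs branch primaries from Proposition \ref{prop:chiralOPE} are all exactly the paper's steps.

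The gap is the Jacobi identity, which you leave as a program (``the goal is to verify that \dots everything reduces faithfully to the argument of \cite{LVWchiral}'') rather than an argument. The concrete ingredient you are missing is the explicit formula for the superpartner of the bracket, which falls out of the $Q^-$ OPE computation you already set up:
\[
\Psi_{\{O_1,O_2\}}(z) \;=\; \tfrac{1}{2}(-1)^{|O_1|}\oint_z \diff x\,\Big(\Psi_{O_1}(x)\Psi_{O_2}(z) + (-1)^{(|O_1|+1)(|O_2|+1)}\Psi_{O_2}(x)\Psi_{O_1}(z)\Big).
\]
It is this identity, not a collapsed Borcherds identity, that drives the paper's Jacobi argument: substituting it into $\{\{O_1,O_2\},O_3\}$ produces exactly four double-residue terms---two with $z$ integrated around $x$ and $x$ around $w$, and two with both integrated around $w$---and the cyclic sum of these vanishes by associativity of the raviolo vertex algebra alone. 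No further input from the BPS bound or the regular self-OPEs $Q^{\pm}(z)Q^{\pm}(w)\sim 0$ is needed at that stage; those were already spent establishing that the bracket lands in $A_H[\CV]$. Relatedly, your worry that the descent mode of $\Psi_{O_1}=Q^-_{(0)}O_1$ differs from the graded commutator $[Q^-_{(0)},O_{1,(0)}]$ by a sign is a non-issue for this route: the paper never commutes $Q^-_{(0)}$ past mode operators in the Jacobi computation, only deforms contours. So your plan would very likely succeed, but as written the hardest part of the theorem is asserted rather than proved.
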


\begin{proof}
	We start by showing $\{O_1, O_2\}$ is indeed a Higgs branch primary operator. First, it saturates the BPS bound: it has spin $j_1 + j_2 - \frac{1}{2}$ and $S$-charge $q_1 + q_2 - 1 = 2(j_1 + j_2-\frac{1}{2})$. Lemma \ref{lem:BPSboundprimaries} implies $\{O_1, O_2\}$ is a both a raviolo Virasoro primary and a superconformal current algebra primary. It therefore suffices to check the OPEs of $\{O_1, O_2\}$ with $Q^\pm$, cf. Corollary \ref{cor:HBP2}.
	
	The regularity of the OPE of $Q^+$ with $\{O_1, O_2\}$ can be seen by realizing the two summands of $\{O_1, O_2\}$ independently have regular OPEs with $Q^+$: 
	\be
	\begin{aligned}
		Q^+(z) \bigg(\oint_w \diff x \Psi_{O_1}(x) O_2(w)\bigg) & \sim \oint_w \diff x \big(\Omega^1_{z-x} (q_1 O_1(x) + \Omega^0_{z-x} \pd O_1(x)\big)O_2(w)\\
		& \sim q_1 \sum_{l \geq 0} (-1)^{l+1}(l+1)\Omega^{l+1}_{z-w} \oint_w \diff x (x-w)^l O_1(x) O_2(w)\\
		& \qquad + \sum_{l \geq 0}(-1)^{l+1} \Omega^l_{z-w} \oint_w \diff x (x-w)^l \pd O_1(x) O_2(w)\\
		& \sim 0
	\end{aligned}
	\ee
	where we used regularity of the OPE between $O_1$ and $O_2$ established in Proposition \ref{prop:chiralOPE}. The OPE with $Q^-$ is the easier of the two: using $Q^- \Psi_{O_1} \sim 0 \sim Q^- \Psi_{O_2}$ we get
	\be
	\begin{aligned}
		Q^-(z) \{O_1, O_2\}(w) & \sim \tfrac{1}{2}\oint_w \diff x\Psi_{O_1}(x) \Omega^0_{z-w} \Psi_{O_2}(w)\\
		& -\tfrac{1}{2}(-1)^{|O_1||O_2|} \oint_w \diff x \Psi_{O_2}(x) \Omega^0_{z-w} \Psi_{O_1}(w)\\
		& \sim \tfrac{1}{2} (-1)^{|O_1|} \Omega^0_{z-w} \oint_w \diff x \Psi_{O_1}(x) \Psi_{O_2}(w)\\
		& + \tfrac{1}{2} (-1)^{|O_1|+(|O_1|+1)(|O_2|+1)} \Omega^0_{z-w} \oint_w \diff x \Psi_{O_2}(x) \Psi_{O_1}(w)
	\end{aligned}
	\ee
	so that $\{O_1, O_2\}$ is indeed a Higgs branch primary operator with
	\be
	\label{eq:superpartnerbracket}
		\Psi_{\{O_1,O_2\}}(z) = \tfrac{1}{2} (-1)^{|O_1|} \oint_z \diff x \bigg(\Psi_{O_1}(x) \Psi_{O_2}(z) + (-1)^{(|O_1|+1)(|O_2|+1)} \Psi_{O_2}(x) \Psi_{O_1}(z)\bigg)
	\ee
	
	It is clear from the definition that the bracket has the necessary skew-symmetry. The fact that it is a derivation of the product can be seen as follows. Corollary 2.2.5 of \cite{GarnerWilliams} implies the following identity:
	\be
	\begin{aligned}
		\oint_w \diff x \Psi_{O_1}(x) \norm{O_2 O_3}(w) & = \norm{\bigg(\oint_w \diff x \Psi_{O_1}(x) O_2(w)\bigg) O_3(w)} \\
		& + (-1)^{|O_1| |O_2|}\norm{O_2(w) \bigg(\oint_w \diff x \Psi_{O_1}(x) O_3(w)\bigg)}
	\end{aligned}
	\ee
	The expression for $\Psi_{\norm{\,O_2 O_3\,}}$ in Proposition \ref{prop:chiralOPE} tells us
	\be
	\oint_w \diff x \Psi_{\norm{\,O_2 O_3\,}}(x) O_1(w) = \oint_w \diff x \big(\norm{\Psi_{O_2}O_3}(x) + (-1)^{|O_2||O_3|} \norm{\Psi_{O_3}O_2}(x)\big)O_1(w)
	\ee
	The BPS bound implies that the OPE of $\norm{\Psi_{O_2}O_3}$ and $O_1$ has no terms proportional to $\Omega^n_{z-w}$ for $n > 0$, whence
	\be
	-(-1)^{|O_1|(|O_2|+|O_3|)}\oint_w \diff x \norm{\Psi_{O_2}O_3}(x) O_1(w) = -(-1)^{|O_1|}\oint_w \diff x O_1(x) \norm{\Psi_{O_2}O_3}(w)
	\ee
	Corollary 2.2.5 of \cite{GarnerWilliams}, Proposition \ref{prop:chiralOPE}, and the BPS bound then give us
	\be
	\begin{aligned}
		-(-1)^{|O_1|}\oint_w \diff x O_1(x) \norm{\Psi_{O_2}O_3}(w) & = (-1)^{|O_1|}\norm{\bigg(\oint_w \diff x O_1(x) \Psi_{O_2}(w) \bigg)O_3(w)}\\
		& = -(-1)^{|O_1||O_2|} \norm{\bigg(\oint_w \diff x \Psi_{O_2}(x) O_1(w) \bigg)O_3(w)}\\
	\end{aligned}
	\ee
	Together with a similar analysis of the OPE $\norm{\Psi_{O_3} O_2}$ and $O_1$, these imply 
	\be
	\begin{aligned}
		\{O_1, \norm{O_2 O_3}\}(w) & = \oint_w \diff x \Psi_{O_1}(x) \norm{O_2 O_3}(w) - (-1)^{|O_1|(|O_2|+|O_3|)} \Psi_{\norm{\, O_2 O_3\,}}(x) O_1(w)\\
		& =  \norm{\{O_1, O_2\} O_3}(w) + (-1)^{|O_1||O_2|}\norm{O_2 \{O_1, O_3\}}(w)
	\end{aligned}
	\ee
	
	We are left with showing the bracket satisfies the Jacobi identity. Using the above formula for $\Psi_{\{O_1,O_2\}}$, we have the following identity:
	\be
	\begin{aligned}
		\{\{O_1,O_2\}, O_3\}(w) & = \tfrac{1}{4}(-1)^{|O_1|} \oint_w \diff x \oint_x \diff z \Psi_{O_1}(z) \Psi_{O_2}(x) O_3(w)\\
		& + \tfrac{1}{4} (-1)^{(|O_1|+1)(|O_2|+1)+|O_1|} \oint_w \diff x \oint_x \diff z \Psi_{O_2}(z) \Psi_{O_1}(x) O_3(w)\\
		& - \tfrac{1}{4}\oint_w \diff x \Psi_{O_3}(x)\oint_w \diff z \Psi_{O_1}(z) O_2(w)\\
		& + \tfrac{1}{4} (-1)^{|O_1||O_2|} \oint_w \diff x \Psi_{O_3}(x) \oint_w \diff z \Psi_{O_2}(z) O_1(w)\\
	\end{aligned}
	\ee
	The first two lines integrate $z$ over a small 2-sphere surrounding $x$ and $x$ over a larger 2-sphere surround $w$; the third and fourth lines integrate $z$ over a small sphere around $w$ and $x$ over a larger 2-sphere surrounding $w$. Summing over cyclic permutations with the appropriate signs, we see that the Jacobi identity follows from associativity, i.e. that the brackets of the mode algebra satisfy the Jacobi identity.
\end{proof}

We now show that if $\CV$ has a $\fg$ Higgs branch flavor symmetry, the $M_a$ play a distinguished role in the Higgs branch chiral ring: they are the components of a moment map for a $\fg$ action on the Higgs branch. For starters, Proposition \ref{prop:chiralOPE} implies the operator $M_a$, being a Higgs branch primary, has regular OPEs with any other Higgs branch primary operator (so long as the BPS bound is satisfied). If $O$ is any other Higgs branch primary operator, then associativity and $N=2$ superconformal symmetry totally constrains the OPE of $M_a$ and the superpartner $\Psi_O$.

\begin{corollary}
	\label{cor:HBsymmprimary}
	Let $\CV$ be an $N=2$ superconformal raviolo vertex algebra satisfying the BPS bound and suppose $\mu_a$ generates a $\fg$ Higgs branch flavor symmetry and let $M_a$ be the Higgs branch primary operators with superpartners $\mu_a$. If $O_i$ are Higgs branch primary operators, with superpartners $\Psi_{O_i}$, transforming in a representation $R$, then the OPE of $M_a$ and $\Psi_{O_i}$ is given by
	\be
	M_a(z) \Psi_{O_i}(w) \sim \Omega^0_{z-w}(-O_j(w)(\rho_a)^j{}_i)
	\ee
	where $\rho_a$ are the representation matrices for the $\fg$ action on $R$
\end{corollary}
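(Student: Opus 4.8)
The plan is to bootstrap the OPE $M_a(z)\Psi_{O_i}(w)$ from superconformal symmetry, the BPS bound, and the fact that $M_a$ is itself a Higgs branch primary. First I would restrict the possible singular terms by quantum numbers. Applying Lemma~\ref{lem:HBP} to $M_a$ gives spin $\tfrac12$ and $S$-charge $1$, while $\Psi_{O_i}$ has spin $j_i+\tfrac12$ and $S$-charge $q_i-1=2j_i-1$; hence the coefficient of $\Omega^n_{z-w}$ in $M_a(z)\Psi_{O_i}(w)$ carries spin $j_i-n$, $S$-charge $2j_i$, and $R$-charge $r_i$ (the $R$-charge of $O_i$). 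The BPS bound forces this coefficient to vanish unless $n=0$, and the surviving one $(M_{a,(0)}\Psi_{O_i})(w)$ saturates the bound (spin $j_i$, $S$-charge $2j_i$), so by Lemma~\ref{lem:BPSboundprimaries} it is automatically a raviolo Virasoro and superconformal current algebra primary. Thus $M_a(z)\Psi_{O_i}(w)\sim\Omega^0_{z-w}(M_{a,(0)}\Psi_{O_i})(w)$ and it remains only to identify this one coefficient.

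The key point is that $M_a$ and $O_i$ are the ``lower halves'' of their superconformal multiplets: the defining OPEs give $\mu_a=Q^-_{(0)}M_a$ and $\Psi_{O_i}=Q^-_{(0)}O_i$. I would then use the mode-bracket identities that follow from associativity (Theorem 3.3.1 of~\cite{GarnerWilliams}) to write, schematically,
\[
	M_{a,(0)}\Psi_{O_i}=M_{a,(0)}\,Q^-_{(0)}\,O_i=\big(M_{a,(0)}Q^-\big)_{(0)}O_i\;\pm\;Q^-_{(0)}\big(M_{a,(0)}O_i\big).
\]
Here $M_{a,(0)}O_i=0$: the OPE of the two Higgs branch primaries $M_a$ and $O_i$ is regular by Proposition~\ref{prop:chiralOPE} (the BPS bound holds), so the second term drops out. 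For the first, skew-symmetry applied to $Q^-(z)M_a(w)\sim\Omega^0_{z-w}\mu_a(w)$ — which has a single singular term and therefore contributes a single summand under the skew-symmetry formula — identifies $M_{a,(0)}Q^-$ with $\pm\mu_a$, leaving $M_{a,(0)}\Psi_{O_i}=\pm\,\mu_{a,(0)}O_i$. Equivalently, one can mimic the contour-deformation argument in the proof of Theorem~\ref{thm:poissonHB}: present $(M_{a,(0)}\Psi_{O_i})(w)$ as the double integral $\oint_w\diff z\,\oint_w\diff x\,M_a(z)Q^-(x)O_i(w)$ with the $x$-contour nested inside the $z$-contour, split the $z$-contour into a piece around $x$ and a piece around $w$, discard the latter by regularity of the $M_a$--$O_i$ OPE, and evaluate the former using the $Q^-$--$M_a$ OPE.

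Finally, since the $O_i$ are Higgs branch primaries transforming in the representation $R$, Lemma~\ref{lem:HBsymmprimary} tells us they are superconformal flavor current algebra primaries, $\mu_a(z)O_i(w)\sim\Omega^0_{z-w}\big(O_j(w)(\rho_a)^j{}_i\big)$, i.e.\ $\mu_{a,(0)}O_i=O_j(\rho_a)^j{}_i$. Combining with the previous step yields $M_a(z)\Psi_{O_i}(w)\sim\Omega^0_{z-w}\big(\pm O_j(w)(\rho_a)^j{}_i\big)$, and the overall sign is fixed to $-1$.

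The genuinely delicate point is exactly that last sign. The $\Omega^m_{z-w}$ carry cohomological degree $1$ and therefore enter the Koszul rule, and while $M_a$ is bosonic the superpartner $\mu_a=\Psi_{M_a}$ — and the operator $Q^-_{(0)}$ — are odd, so each use of the mode-bracket identity $[M_{a,(0)},Q^-_{(0)}]=(M_{a,(0)}Q^-)_{(0)}$ and of the skew-symmetry formula must be tracked with care; the claimed factor of $-1$ is their net effect. A secondary technical point is to confirm that the mode-bracket manipulation (or the contour deformation) is legitimate in the raviolo setting, which rests on the locality axiom and the description of three-point configurations via $\CK_{z,w,z-w}$ (cf.\ Lemma~\ref{lem:expand}).
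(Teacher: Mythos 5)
Your argument is the one the paper intends: the paper offers no written proof of this corollary beyond the preceding remark that ``associativity and $N=2$ superconformal symmetry totally constrains the OPE of $M_a$ and the superpartner $\Psi_O$,'' and your chain --- quantum numbers plus the BPS bound kill every singular term except the $\Omega^0_{z-w}$ coefficient, which saturates the bound; $\Psi_{O_i}=Q^-_{(0)}O_i$; the $M_a$--$O_i$ OPE is regular by Proposition \ref{prop:chiralOPE}; $M_{a,(0)}Q^-=\mu_a$ by skew-symmetry; and $\mu_{a,(0)}O_i=O_j(\rho_a)^j{}_i$ by Lemma \ref{lem:HBsymmprimary} --- is exactly that argument made explicit.

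The one point you leave genuinely unresolved is the sign, and it deserves to be pinned down because the most naive bookkeeping gets it wrong: with the totalized parities $|M_a|=|Q^-|=0$, a direct application of the commutator formula $[M_{a,(0)},Q^-_{(0)}]=(M_{a,(0)}Q^-)_{(0)}$ together with $M_{a,(0)}O_i=0$ would appear to give $M_{a,(0)}\Psi_{O_i}=+\mu_{a,(0)}O_i$. The missing $-1$ does not come from the parities of the fields but from the fact that the raviolo residue $\oint \diff z$ is itself an odd operation (it maps to $\C[-1]$, reflecting that the $\Omega^m$ sit in cohomological degree $1$ and anticommute). In your contour version of the argument, one starts from $\oint_w\diff z\, M_a(z)\oint_w \diff x\, Q^-(x)O_i(w)$ with the $z$-residue outermost; after deforming the $z$-contour onto a small sphere around $x$, rewriting the surviving term as $\oint_w\diff x\,\big(\oint_x\diff z\, M_a(z)Q^-(x)\big)O_i(w)=\oint_w\diff x\,\mu_a(x)O_i(w)$ requires reordering the two odd residues, which costs exactly one sign. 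This is consistent with the free hypermultiplet check: $M^{\alpha\beta}_{(0)}\psi^\gamma=\tfrac12\big(\epsilon^{\gamma\beta}Z^\alpha+\epsilon^{\gamma\alpha}Z^\beta\big)=-\mu^{\alpha\beta}_{(0)}Z^\gamma$. With that one clause added, your proof is complete.
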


Using this expression for the OPE of $M_a$ and the superpartner $\Psi_{O_i}$, we can deduce the following result.

\begin{corollary}
\label{cor:flavorsymmerty}
	Let $\CV$ be an $N=2$ superconformal raviolo vertex algebra satisfying the BPS bound and suppose $\mu_a$ generates a $\fg$ Higgs branch flavor symmetry. Then the Higgs branch $\CM_H[\CV]$ admits a Hamiltonian $\fg$ action with comoment map $\fg \to \CM_H[\CV]$, $T_a \mapsto M_{a}$.
\end{corollary}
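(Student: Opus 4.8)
The plan is to read everything off from the Poisson bracket of Theorem~\ref{thm:poissonHB} together with the OPE of Corollary~\ref{cor:HBsymmprimary}. I would first record that the raviolo current $\mu_a$ is fermionic, so its Higgs branch primary partner $M_a$ is bosonic; hence $|M_a| = 0$ and every Koszul sign attached to $M_a$ below is trivial. The key step is to evaluate $\{M_a, O\}$ for an arbitrary Higgs branch primary $O = O_i$ transforming in a representation $R$ of $\fg$ with representation matrices $\rho_a$ as in Corollary~\ref{cor:HBsymmprimary}. Inserting $O_1 = M_a$, $O_2 = O_i$ into the bracket of Theorem~\ref{thm:poissonHB} and using $\Psi_{M_a} = \mu_a$,
\[
	\{M_a, O_i\}(z) \;=\; \tfrac12 \oint_z \diff x\,\bigl(\mu_a(x)\,O_i(z) - \Psi_{O_i}(x)\,M_a(z)\bigr).
\]
By Lemma~\ref{lem:HBsymmprimary} the operator $O_i$ is a primary for the flavor current algebra, so $\mu_a(x) O_i(z) \sim \Omega^0_{x-z}\, O_j(z)(\rho_a)^j{}_i$ and the first residue contributes $O_j(z)(\rho_a)^j{}_i$; using Corollary~\ref{cor:HBsymmprimary} together with the skew-symmetry of the OPE, the second residue contributes $-O_j(z)(\rho_a)^j{}_i$. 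Thus $\{M_a, O_i\} = O_j(\rho_a)^j{}_i$: bracketing with $M_a$ reproduces the $\fg$-action on Higgs branch primaries.

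Next I would specialize to $O_i = M_b$. Since $M_b$ is itself a Higgs branch primary, Lemma~\ref{lem:HBsymmprimary} shows that $M_b$ is a flavor-current primary transforming in the same representation as its superpartner $\mu_b$, namely the adjoint; hence the previous step yields $\{M_a, M_b\} = f^c{}_{ab} M_c$, with $f^c{}_{ab}$ the structure constants of $\fg$. (One can also obtain this without Corollary~\ref{cor:HBsymmprimary}: in the bracket $\{M_a, M_b\}$ both summands already amount to $\mu$ acting on $M$, and the two current-primary residues are $f^c{}_{ab} M_c$ and $-f^c{}_{ab} M_c$.) As a grading check, $M_a$ has Higgs branch $R$-charge $R_B = R + \sigma_{(0)} = 2$ and the bracket has intrinsic Higgs branch $R$-charge $-2$, so $\{M_a, M_b\}$ again sits in $R_B = 2$, as it must. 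Therefore the $\C$-linear map $\fg \to A_H[\CV]$, $T_a \mapsto M_a$, is a homomorphism of Lie algebras for the bracket of Theorem~\ref{thm:poissonHB}.

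Finally I would package this as a Hamiltonian action. Since $\{-,-\}$ is a biderivation of the commutative product on $A_H[\CV]$, each $\{M_a,-\}$ is a derivation of that product; since $\{-,-\}$ satisfies the Jacobi identity, each $\{M_a,-\}$ is also a derivation of the bracket, and $[\{M_a,-\},\{M_b,-\}] = \{\{M_a,M_b\},-\} = f^c{}_{ab}\{M_c,-\}$. Hence $T_a \mapsto \{M_a,-\}$ defines an action of $\fg$ on $\CM_H[\CV] = \Spec A_H[\CV]$ by Poisson vector fields, and by construction it is Hamiltonian with comoment map $T_a \mapsto M_a$. I expect no serious obstacle, as all the ingredients are already in place; the only delicate point is the sign bookkeeping in the first step, namely confirming that the two contributions to $\{M_a, O\}$ add rather than cancel and yield $\rho_a$ with unit coefficient (not $\tfrac12 \rho_a$ or $0$). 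This is precisely what the superconformal Ward identities underlying Corollary~\ref{cor:HBsymmprimary} enforce, provided the $\mu_a(z) O(w)$ OPE of Lemma~\ref{lem:HBsymmprimary} and the $M_a(z) \Psi_O(w)$ OPE of Corollary~\ref{cor:HBsymmprimary} are normalized compatibly.
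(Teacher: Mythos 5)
Your proposal is correct and follows essentially the same route as the paper: the paper's proof is exactly the computation $\{M_a, O_i\}(z) = \tfrac{1}{2}\oint_z \diff x\,(\mu_a(x) O_i(z) - \Psi_{O_i}(x) M_a(z)) = (\rho_a)\,O(z)$, using the OPEs of Lemma \ref{lem:HBsymmprimary} and Corollary \ref{cor:HBsymmprimary}. Your additional verification that $T_a \mapsto M_a$ is a Lie algebra homomorphism (via $\{M_a,M_b\} = f^c{}_{ab}M_c$) and that $\{M_a,-\}$ acts by Poisson derivations is left implicit in the paper but is a correct and welcome elaboration.
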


There are identical results concerning Coulomb branch flavor symmetries, their proofs are identical to the ones presented above.

\begin{proof}
	This follows from an explicit computation. Let $O_i$ be a collection of Higgs branch primary fields transforming in a representation $R$ of $\fg$, then the Poisson bracket of $M_a$ and $O_i$ is given by
	\be
	\{M_a, O_i\}(z) = \tfrac{1}{2}\oint_z \diff x \bigg( \mu_a(x) O_i(z) - \Psi_{O_i}(x) M_a(z)\bigg) = (\rho_a)^i{}_j O_j(z)
	\ee
	In the second equality we used the OPEs determined by Lemma \ref{lem:HBsymmprimary} and Corollary \ref{cor:HBsymmprimary}.
\end{proof}

\subsubsection{Example: free chiral(s)}
As a first example, consider the $N=2$ superconformal raviolo vertex algebra of a free chiral $SFC$. As we saw above, there are no non-trivial Coulomb branch primary operators, whence $A_C[SFC] = \C$. The Higgs branch chiral ring is generated by the fermion $\eta$, corresponding to the ring of functions on $\CM_H[SFC] = \Pi \C[-1]$. The Poisson bracket of $\eta$ with itself is also easy to compute:
\be
	\{\eta, \eta\}(z) = \oint_z \diff x X(x) \eta(z) = 1
\ee
In particular, the degree $-2$ Poisson structure on the Higgs branch $\Pi\C[-1]$ is non-trivial.

We note that if we were to consider $M$ free chirals then there would be an $SO(M)$ Higgs branch flavor symmetry generated by the bilinears
\be
	 \delta_{[a|c}\norm{X^c \eta_{|b]}} \qquad \text{and} \qquad \norm{\eta_a \eta_b} \; .
\ee

\subsubsection{Example: free hypermultiplet}
As we saw in Section \ref{sec:SCexamples}, the only Coulomb branch primary operator in $FH$ is the identity operator $1$. We also saw that the generating bosons $Z^\alpha$ were Higgs branch primary operators. These operators generate the Higgs branch chiral ring: $A_H[FH] = \C[Z^\alpha]$. We note that $Z^\alpha$ has Higgs branch $R$-charge $1$, so the Higgs branch is identified as $\CM_H[FH] = \C^2[-1]$. The bracket $\{Z^\alpha, Z^\beta\}$ is given by
\be
\{Z^\alpha, Z^\beta\}(z) = \tfrac{1}{2} \oint_z \diff x \bigg(\psi^\alpha(x) Z^\beta(z) - \psi^\beta(x) Z^\alpha(z)\bigg) = \epsilon^{\alpha \beta}
\ee
so we recover the natural degree $-2$ Poisson bracket on $\C^2[-1]$. The bilinears $M^{\alpha \beta} = \tfrac{1}{2}\norm{Z^\alpha Z^\beta}$ are particularly special Higgs branch local operators: their superpartners are the $\fsl(2)$ currents $\epsilon^{(\alpha| \gamma} \norm{\psi_\gamma Z^{|\beta)}}$. We see that these generate an $\fsl(2)$ Higgs branch flavor symmetry. More generally, if we were to consider many copies of this theory there would be an $Sp(2M)$ Higgs branch flavor symmetry generated by the bilinears
\be
	\omega^{(\alpha| \gamma} \norm{\psi_\gamma Z^{|\beta)}} \qquad \text{and} \qquad \tfrac{1}{2}\norm{Z^\alpha Z^\beta}
\ee
where $\omega^{\alpha \beta}$ is the Poisson tensor $\omega_{\alpha \beta}$ corresponding to a choice of symplectic form on $\C^{2M}$.

Putting these results together, we see that the Higgs and Coulomb branches of $FH$ precisely match those of a free hypermultiplet as Poisson varieties: $\CM_C[FH]$ is a point and $\CM_H[FH] = \C^2[-1]$ with Poisson tensor $\epsilon^{\alpha \beta}$.

\subsubsection{Example: free $\CN=4$ vector multiplet}

We saw there were no non-trivial Coulomb branch primary operators in $FV^{\text{pert}}$, so $A_C[FV^{\text{pert}}] = \C$. The Higgs branch chiral ring is a polynomial algebra in two fermionic variables $c, \lambda$. The Poisson bracket of the generators $c, \lambda$ is given by
\be
\{c, \lambda\}(z) = \tfrac{1}{2} \oint_z \diff x \bigg(\phi(x) \lambda(z) + b(x) c(z)\bigg) = 1
\ee
Together with $\{c,c\} = \{\lambda, \lambda\} = 0$, we see that the Higgs branch can be identified with the cotangent bundle $T^*[2](\Pi \C[-1])$ with its natural Poisson structure.

This result is quite different from the full, non-perturbative Higgs and Coulomb branches of a free $\CN=4$ vector multiplet: the non-perturbative Higgs branch is $\Pi \C[-1]$ and the non-perturbative Coulomb branch is $T^*[2]\C^\times$. We can reproduce this answer from $FV$. The only operators that satisfy $j = \frac{q}{2}$ are $1$ and $\lambda$; $\lambda$ is still a Higgs branch primary operator, with superpartner $b$, and Higgs branch $R$-charge $1$. The bracket $\{\lambda, \lambda\}$ vanishes, so we conclude the Higgs branch can be identified as $\CM_H[FV] = \Pi\C[-1]$ with trivial Poisson structure.

The Coulomb branch chiral ring is generated by the monopoles $V_\pm$ together with the boson $\phi$; the monopoles have Coulomb branch $R$-charge $0$, whereas $\phi$ has Coulomb branch $R$-charge $2$. This matches the algebra of functions on $T^*[2]\C^\times$. We find that the Poisson brackets of the generators of the Coulomb branch chiral ring take the expected form
\be
\{\phi, V_\pm\}(z) = \tfrac{1}{2}\oint_z \diff x \bigg(\nu(x) V_\pm(z) \mp \norm{\lambda V_\pm}(x) \phi(z)\bigg) = \pm V_\pm(z)
\ee
Indeed, the superpartner of $\phi$ is the abelian current $\nu$ generating the topological flavor symmetry, hence it is the moment map for the topological flavor symmetry rotating $\C^\times$. If we were to consider $M$ copies of his theory, the $\C^\times$ Coulomb branch symmetry becomes a $(\C^\times)^M$ Coulomb branch flavor symmetry.

\subsection{Superconformal raviolo vertex algebras with superpotential}
\label{sec:superconfsuperpot}

Raviolo vertex algebras can be deformed/coupled by introducing a superpotenial, cf. Section 4.5 of \cite{GarnerWilliams}. For $\CV$ a raviolo vertex algebra, a \emph{superpotential} is a bosonic vector $W \in \CV^2$ of $R$-charge 2 such that $W_{(0)}W$ belongs to the image of the translation operator $\pd$; the pair $(\CV, W)$ is called a raviolo vertex algebra with superpotential. As shown by Lemma 4.5.2 of loc. cit., $D_W: O \to W_{(0)} O$ defines a differential on $\CV$. If $\CV$ is conformal, the superpotential is required to be a primary of spin 1, i.e. the OPE of the stress tensor $\Gamma$ and $W$ is of the form
\be
	\Gamma(z) W(w) \sim \Omega^1_{z-w} W(w) + \Omega^0_{z-w} \pd W(w)\,.
\ee

Now suppose $\CV$ is an $N=2$ superconformal raviolo vertex algebra satisfying the BPS bound and suppose $(\CV, W)$ is a conformal raviolo vertex algebra with superpotential such that $W$ is a primary for the superconformal current $\sigma$ with $q_W = 0$, i.e. the OPE of $W$ and $\sigma$ is regular and hence $D_W \sigma = 0$. The BPS bound implies that the OPE of $W$ and $Q^\pm$ takes the form
\be
	W(z) Q^\pm(w) \sim \Omega^1_{z-w} W^{(1)}_\pm(w) + \Omega^0_{z-w} W^{(0)}_\pm(w)
\ee
where $W^{(0)}_\pm(w) = (W_{(0)}Q^\pm)(w)$ and $W^{(1)}_\pm(w) = (W_{(1)}Q^\pm)(w)$. In order for the $N=2$ superconformal symmetry to be compatible with the differential $D_W$, i.e. $D_W Q^\pm = 0$, we require $W^{(0)}_\pm = 0$; we call such a pair $(\CV, W)$ an $N=2$ \emph{superconformal raviolo vertex algebra with superpotential} (satisfying the BPS bound). Note that $W^{(1)}_\pm$ has $R$-charge $2$, spin $\frac{1}{2}$, and $S$-charge $\pm 1$; we see that they necessarily saturate the BPS bound.

The following result is immediate from the definitions.
\begin{proposition}
	Let $(\CV, W)$ be an $N=2$ superconformal raviolo vertex algebra with superpotential satisfying the BPS bound, then $D_W$ equips $\CV$ with the structure of a dg $N=2$ superconformal raviolo vertex algebra. Moreover, if $O$ is a Higgs or Coulomb branch primary operator then so too is $D_W O$, with superpartner $-D_W \Psi_O$.
\end{proposition}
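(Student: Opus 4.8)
The statement splits into a structural claim and a claim about primaries. The plan for the first is to observe that the defining data of an $N=2$ superconformal raviolo vertex algebra with superpotential already contains everything needed. Recall from Section 4.5 of \cite{GarnerWilliams} that $D_W O = W_{(0)} O$ is an odd, square-zero derivation of the underlying raviolo vertex algebra -- odd because $W \in \CV^2$ forces the mode $W_{(0)}$ to have cohomological degree $+1$. Upgrading $(\CV, D_W)$ to a \emph{dg} $N=2$ superconformal raviolo vertex algebra then only requires that $D_W$ annihilate the generators $\Gamma$, $\sigma$, $Q^\pm$. The vanishing $D_W Q^\pm = W_{(0)} Q^\pm = W^{(0)}_\pm$ is precisely the hypothesis imposed in the definition of such a pair $(\CV, W)$. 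For $\Gamma$ and $\sigma$ I would feed the primary conditions on $W$ into the OPE skew-symmetry formula (Proposition 3.2.2 of \cite{GarnerWilliams}): from $\Gamma(z) W(w) \sim \Omega^1_{z-w} W(w) + \Omega^0_{z-w}\pd W(w)$, that is $\Gamma_{(0)} W = \pd W$, $\Gamma_{(1)} W = W$, and $\Gamma_{(m)} W = 0$ for $m \geq 2$, skew-symmetry yields $W_{(0)}\Gamma = \sum_{l \geq 0}\tfrac{(-1)^l}{l!}\pd^l(\Gamma_{(l)} W) = \pd W - \pd W = 0$, and the hypothesis $q_W = 0$, i.e.\ $\sigma(z)W(w) \sim 0$, similarly gives $W_{(0)}\sigma = 0$.

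For the second claim, take $O$ a Higgs branch primary of spin $j$ and $S$-charge $q$, so $j = \tfrac{q}{2}$ by Lemma~\ref{lem:HBP}, with superpartner $\Psi_O = Q^-_{(0)} O$. The crucial point is that because $D_W$ is a derivation killing $\Gamma$ and $\sigma$, it graded-commutes with every mode of those fields -- in particular with the spin operator $G_1 = \Gamma_{(1)}$ and the $S$-charge operator $S_0 = \sigma_{(0)}$ -- while $D_W$ itself has cohomological degree $+1$. Hence $D_W O$ is homogeneous with the same spin $j$ and $S$-charge $q$ as $O$, so it saturates the BPS bound and Lemma~\ref{lem:BPSboundprimaries} makes it automatically a raviolo Virasoro primary and superconformal current-algebra primary. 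By Corollary~\ref{cor:HBP2} it then suffices to check $Q^+_{(0)}(D_W O) = 0$ and $Q^-_{(1)}(D_W O) = 0$; since $D_W Q^\pm = 0$, the derivation $D_W$ graded-commutes with the modes $Q^\pm_{(n)}$, so these follow by applying $D_W$ to $Q^+_{(n)} O = 0$ ($n \geq 0$) and to $Q^-_{(n)} O = \delta_{n,0}\Psi_O$. The same commutation computes the superpartner: $\Psi_{D_W O} = Q^-_{(0)}(D_W O) = - D_W\big(Q^-_{(0)} O\big) = - D_W\Psi_O$, the minus sign arising because $D_W$ and $Q^-_{(0)}$ are both odd and graded-commute. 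Replacing $Q^+ \leftrightarrow Q^-$ and $q \to -q$ handles the Coulomb branch case verbatim.

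Essentially every step is an unwinding of definitions together with Lemmas~\ref{lem:HBP}, \ref{lem:BPSboundprimaries} and Corollary~\ref{cor:HBP2}; I do not expect a genuine obstacle. The one place demanding real care is the Koszul-sign bookkeeping -- in particular the fact that the parity of a mode $A_{(n)}$ flips between $n < 0$ and $n \geq 0$, because $\Omega^n_z$ is of odd cohomological degree -- since that is exactly what produces the minus sign in $\Psi_{D_W O} = - D_W\Psi_O$ (and what must be tracked when verifying that $D_W$ graded-commutes with the various modes). As a cross-check I would rederive that sign directly from the Leibniz rule for the odd derivation $D_W$ applied to $\Psi_O = Q^-_{(0)} O$, which fixes it without ever assigning parities to individual modes.
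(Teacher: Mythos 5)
Your proof is correct and is exactly the unwinding of definitions that the paper leaves implicit (the paper states this result with no proof, declaring it ``immediate from the definitions''). The key points — that $D_W=W_{(0)}$ kills $\Gamma,\sigma,Q^\pm$ (by hypothesis and by skew-symmetry applied to the primary conditions on $W$), hence graded-commutes with all their modes via the commutator formula, so that Corollary \ref{cor:HBP2} applies to $D_WO$ and the odd--odd anticommutation $Q^-_{(0)}W_{(0)}=-W_{(0)}Q^-_{(0)}$ produces the sign in $\Psi_{D_WO}=-D_W\Psi_O$ — are all handled correctly.
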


We see that if $(\CV, W)$ is a $N=2$ superconformal raviolo vertex algebra with superpotential satisfying the BPS bound, then $D_W$ induces a degree 1 differential on the Higgs and Coulomb branch chiral rings. In fact, it is actually a derivation of the full shifted Poisson structure:

\begin{corollary}
	Let $(\CV, W)$ be an $N=2$ superconformal raviolo vertex algebra with superpotential satisfying the BPS bound, and let $O_1, O_2$ be Higgs branch primary operators, then
	\begin{equation*}
		D_W \{O_1, O_2\} = \{D_W O_1, O_2\} + (-1)^{|O_1|}\{O_1, D_W O_2\}
	\end{equation*}
	In particular, $D_W$ equips the Higgs branch chiral ring with the structure of a dg 2-shifted Poisson algebra.
\end{corollary}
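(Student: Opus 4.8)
The plan is to reduce everything to two structural facts about $D_W$ and then let the defining formula for the bracket do the rest. Recall from Theorem \ref{thm:poissonHB} that, using $\oint_w\diff x\, A(x)B(w) = (A_{(0)}B)(w)$, the bracket is, at the level of states, $\{O_1,O_2\} = \tfrac12\big(\Psi_{O_1,(0)}O_2 - (-1)^{|O_1||O_2|}\Psi_{O_2,(0)}O_1\big)$; moreover $\Psi_O = Q^-_{(0)}O$ by the definition of the superpartner. So the identity to be proved is entirely a statement about how $D_W$ interacts with the zeroth mode products $A_{(0)}B$ and with the operation $O \mapsto Q^-_{(0)}O$.

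The first fact I would use is that, by the Proposition immediately above, $D_W$ is an (odd) derivation of the raviolo vertex algebra structure: $[D_W, Y(A,z)] = Y(D_W A, z)$, hence $[D_W, A_{(n)}] = (D_W A)_{(n)}$ for every mode, and therefore a graded Leibniz rule of the form $D_W(A_{(n)}B) = (D_W A)_{(n)}B \pm A_{(n)}(D_W B)$, where for $n \geq 0$ the sign carries an extra shift relative to a plain derivation because $A_{(n)}$ then has cohomological degree $|A|-1$ (the generators $\Omega^m_z$ being of degree $1$). The second fact is that $D_W$ anticommutes with all the modes of $Q^\pm$: by the defining hypothesis of an $N=2$ superconformal raviolo vertex algebra with superpotential we have $D_W Q^\pm = W_{(0)}Q^\pm = W^{(0)}_\pm = 0$, so $[D_W, Q^\pm_{(n)}] = (D_W Q^\pm)_{(n)} = 0$ for all $n$; applied to a state this recovers $D_W \Psi_O = -\Psi_{D_W O}$, which is exactly the superpartner statement of the Proposition above (and, applied to $Q^\pm_{(1)}$, it also shows $D_W$ preserves the characterisation of Higgs branch primaries in Corollary \ref{cor:HBP2}).

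With these in hand the computation is a direct expansion: apply the Leibniz rule to each of the two summands $\Psi_{O_i,(0)}O_j$ appearing in $\{O_1,O_2\}$, push $D_W$ through the $Q^-_{(0)}$ producing $\Psi_{O_i}$ via the second fact, recognise $D_W O_i$ together with its superpartner $-D_W\Psi_{O_i}$ as a Higgs branch primary, and reassemble the four resulting terms using the skew-symmetry of the bracket to land on $\{D_W O_1, O_2\} + (-1)^{|O_1|}\{O_1, D_W O_2\}$. The one place that genuinely needs care — and the only real obstacle — is the Koszul-sign bookkeeping: $D_W$ is odd of cohomological degree $+1$, the zeroth mode product lowers cohomological degree by one, and the superpartner map $O\mapsto\Psi_O$ flips parity, so all the $\pm1$ shifts must be tracked consistently against the totalized ($R$-charge plus super-degree) parity; the overall sign $(-1)^{|O_1|}$ in the Leibniz rule for $\{-,-\}$ is precisely what survives after combining the parity mismatch between $O_i$ and $\Psi_{O_i}$ with the $(-1)^{|O_1||O_2|}$ in the definition of the bracket.

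For the concluding sentence it remains only to collect the pieces. By Lemma 4.5.2 of \cite{GarnerWilliams} $D_W$ is a differential, $D_W^2 = 0$; by the Proposition above it preserves the subspace of Higgs branch primaries, so it restricts to a degree $+1$ differential on $A_H[\CV]$; the collision product on $A_H[\CV]$ is the specialised normally-ordered product, of which $D_W$ is a derivation since it is a derivation of the full dg raviolo vertex algebra structure; and the identity just established says $D_W$ is a derivation of the bracket. As $(A_H[\CV], \cdot, \{-,-\})$ is already a $2$-shifted Poisson algebra by Theorem \ref{thm:poissonHB}, these statements together say exactly that $(A_H[\CV], D_W, \cdot, \{-,-\})$ is a dg $2$-shifted Poisson algebra; the same argument applies verbatim to $A_C[\CV]$.
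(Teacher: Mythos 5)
Your proposal is correct and follows essentially the same route as the paper: a direct expansion of the defining formula for $\{O_1,O_2\}$ using the graded Leibniz rule for $D_W$ together with the preceding Proposition's statement that $D_W O$ is again a Higgs branch primary with superpartner $-D_W\Psi_O$ (which you rederive from $D_W Q^\pm = 0$). The sign bookkeeping you flag is indeed the only delicate point, and your accounting of it matches the paper's computation.
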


There is an identical statement for the Coulomb branch chiral ring.

\begin{proof}
	We compute:
	\be
	\begin{aligned}
		D_W \{O_1, O_2\}(z) & = D_W \bigg(\tfrac{1}{2}\oint_z \diff x \Psi_{O_1}(x) O_2(z) - (-1)^{|O_1||O_2|} \Psi_{O_2}(x)O_1(z) \bigg)\\
		& \hspace{-0.5cm} = \tfrac{1}{2}\oint_z \diff x \bigg(-\big(D_W \Psi_{O_1}(x)\big) O_2(z) + (-1)^{|O_1|}\Psi_{O_1}(x) \big(D_W O_2(z)\big)\\
		& \hspace{-0.5cm} + (-1)^{|O_1||O_2|} \big(D_W \Psi_{O_2}(x)\big)O_1(z) + (-1)^{(|O_1|+1)|O_2|} \Psi_{O_2}(x) \big(D_W O_1(z)\big)\bigg)\\
		& \hspace{-0.5cm} = \{D_W O_1, O_2\}(z) + (-1)^{|O_1|} \{O_1, D_W O_2\}(z)
	\end{aligned}
	\ee
\end{proof}

\subsubsection{Example: perturbative $\CN=4$ gauge theory}
\label{sec:pertgaugetheory}

One important example of an $N=2$ superconformal raviolo vertex algebra with superpotential arises as the supersymmetric analog of the BRST reduction discussed in Section 4.5.2 of \cite{GarnerWilliams}.

Suppose $\CV$ is an $N=2$ superconformal raviolo vertex algebra satisfying the BPS bound and $\mu_a$ generate a $\fg$ Higgs branch flavor symmetry. As before, we denote the accompanying Higgs branch primary operators $M_a$. We then consider the $N=2$ superconformal raviolo vertex algebra $\CV \otimes (FV^{\textrm{pert}}{}^{\otimes \fg})$ and the field
\be
W_{tot} = \tfrac{1}{2} f^a_{bc} \norm{b_a c^b c^c} + f^a_{bc}\norm{\lambda_a c^b \phi^c} - \norm{c^a \mu_a} + \norm{\phi^a M_a}
\ee

\begin{proposition}
	$W_{tot}$ gives $\CV \otimes (FV^{\textrm{pert}}{}^{\otimes \fg})$ the structure of an $N=2$ superconformal raviolo vertex algebra with superpotential.
\end{proposition}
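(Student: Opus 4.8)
The plan is to check, condition by condition, that $\big(\CV\otimes FV^{\textrm{pert}}{}^{\otimes\fg},\,W_{tot}\big)$ is an $N=2$ superconformal raviolo vertex algebra with superpotential. Write $W_{tot}=W_{\mathrm{gh}}+W_{\mathrm{cpl}}+W_\mu+W_M$ for, respectively, the cubic ghost term $\tfrac12 f^a_{bc}\norm{b_a c^b c^c}$, the minimal coupling $f^a_{bc}\norm{\lambda_a c^b \phi^c}$, the term $-\norm{c^a \mu_a}$, and the moment-map coupling $\norm{\phi^a M_a}$, and let $\Gamma_{tot}=\Gamma_\CV+\sum_a\Gamma_{FV,a}$, $\sigma_{tot}=\sigma_\CV+\sum_a\sigma_{FV,a}$, $Q^\pm_{tot}=Q^\pm_\CV+\sum_a Q^\pm_{FV,a}$ be the superconformal data of the tensor product, with $Q^+_{FV}=\norm{\lambda \pd c}$ and $Q^-_{FV}=\norm{b\phi}$. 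The five things to verify are: that $W_{tot}$ is bosonic of $R$-charge $2$; that $W_{tot,(0)}W_{tot}\in\op{im}\pd$, which by Lemma 4.5.2 of \cite{GarnerWilliams} is exactly what makes $D_{W_{tot}}$ a differential; that $W_{tot}$ is a $\Gamma_{tot}$-primary of spin $1$; that $W_{tot}$ is a $\sigma_{tot}$-primary of charge $0$; and that $W^{(0)}_\pm=(W_{tot,(0)}Q^\pm_{tot})=0$. Throughout I would reduce every OPE below to contractions of the elementary fields via the non-commutative Wick formula (Corollary 2.2.5 of \cite{GarnerWilliams}); the relevant singular OPEs are $b_a(z)c^b(w)\sim\Omega^0_{z-w}\delta_a^b$ and $\phi^a(z)\lambda_b(w)\sim\Omega^0_{z-w}\delta^a_b$ inside $FV^{\textrm{pert}}{}^{\otimes\fg}$, together with the OPEs of $\mu_a$ and $M_a$ with one another, with $Q^\pm$, and with Higgs branch primaries, all of which are determined by Lemma \ref{lem:HBsymmprimary}, Corollary \ref{cor:HBsymmprimary}, and the definition of a Higgs branch flavor symmetry.

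The first condition is bookkeeping: using the $R$-charges, spins, $S$-charges and parities of $b_a,c^a,\phi^a,\lambda_a$ (from $FV^{\textrm{pert}}$; in particular $c$ and $\lambda$ are fermionic, $b$ and $\phi$ bosonic) and of $\mu_a$ (fermionic; $R$-charge $1$, spin $1$, $S$-charge $0$) and $M_a$ (bosonic; $R$-charge $1$, spin $\tfrac12$, $S$-charge $1$), each of $W_{\mathrm{gh}},W_{\mathrm{cpl}},W_\mu,W_M$ comes out bosonic with $R$-charge $2$, spin $1$, and $S$-charge $0$. For conformal and superconformal primarity, $W_\mu$ and $W_M$ are normally ordered products of Virasoro and current-algebra primaries supported on commuting tensor factors (the $FV^{\textrm{pert}}{}^{\otimes\fg}$ factor versus $\CV$), so Propositions 4.3.4 and 4.4.4 of \cite{GarnerWilliams} apply directly, and $W_{\mathrm{gh}}$ is the summand already treated in the purely bosonic BRST reduction of Section 4.5.2 of \cite{GarnerWilliams}. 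The summand requiring care is $W_{\mathrm{cpl}}$, since $\lambda_a$ and $\phi^c$ have a singular OPE; here I would verify that the double-contraction terms produced when $\Gamma_{FV}$ (resp.\ $\sigma_{FV}$) acts on this product cancel, so the OPE with $\Gamma_{tot}$ is $\Omega^1_{z-w}W_{tot}(w)+\Omega^0_{z-w}\pd W_{tot}(w)$ and the OPE with $\sigma_{tot}$ is regular.

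The two substantive conditions are the \emph{master equation} $W_{tot,(0)}W_{tot}\in\op{im}\pd$ and the supersymmetry compatibility $(W_{tot,(0)}Q^\pm_{tot})=0$. For the master equation I would expand $W_{tot}(z)W_{tot}(w)=\sum_{X,Y}W_X(z)W_Y(w)$ over $X,Y\in\{\mathrm{gh},\mathrm{cpl},\mu,M\}$ and collect the coefficient of $\Omega^0_{z-w}$. The terms built from $W_{\mathrm{gh}}$ and $W_\mu$ alone are exactly those of the bosonic BRST reduction of Section 4.5.2 of \cite{GarnerWilliams} and assemble into a $\pd$-derivative given the $\fg$ Jacobi identity and the vanishing of the level of $\mu_a$, the latter automatic here because a Higgs branch flavor symmetry is never anomalous (the corollary following Lemma \ref{lem:HBsymmprimary}); the current-algebra levels internal to $FV^{\textrm{pert}}{}^{\otimes\fg}$ vanish automatically because doubly-contracted terms are killed by $\Omega^0_{z-w}\Omega^0_{z-w}=0$. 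Since $c^a,\phi^a$ and $b_a,\lambda_a$ carry the same $\fg$-representations, the terms involving $W_{\mathrm{cpl}}$ are the same Jacobi-type identities. As for the terms involving $W_M$: the self-term vanishes because $M_a(z)M_b(w)\sim0$ (Proposition \ref{prop:chiralOPE}) and $\phi^a(z)\phi^b(w)\sim0$; the cross term pairing $W_M$ with $W_{\mathrm{gh}}$ vanishes as no pair of its factors has a singular OPE; and the cross terms pairing $W_M$ with $W_{\mathrm{cpl}}$ and with $W_\mu$ cancel against one another, the $\phi$--$\lambda$ contraction in the former and the contraction $M_a(z)\mu_b(w)\sim-\Omega^0_{z-w}f^c{}_{ba}M_c(w)$ of Corollary \ref{cor:HBsymmprimary} in the latter producing matching expressions proportional to $f^a_{bc}\norm{M_a c^b \phi^c}$. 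This last cancellation is the equivariance of the moment map $M_a$, and it is what fixes the coefficient of $W_M$ in $W_{tot}$.

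For the supersymmetry compatibility I would expand $W_{tot}(z)Q^-_{tot}(w)$ and extract the coefficient of $\Omega^0_{z-w}$ (the $Q^+$ case being the mirror image). Only a handful of contractions are singular: the ghost $c$ from $W_{\mathrm{gh}}$, $W_{\mathrm{cpl}}$ or $W_\mu$ against the $b$ in $Q^-_{FV}=\norm{b\phi}$; the $\lambda$ in $W_{\mathrm{cpl}}$ against the $\phi$ in $Q^-_{FV}$; and $M_a$ in $W_M$ against $Q^-_\CV$, using $Q^-(z)M_a(w)\sim\Omega^0_{z-w}\mu_a(w)$. Collecting these: the $W_\mu$ and $W_M$ contributions cancel as $-\norm{\mu_a \phi^a}+\norm{\phi^a \mu_a}=0$; the $W_{\mathrm{gh}}$ contribution cancels the part of the $W_{\mathrm{cpl}}$ contribution coming from the $\lambda$--$\phi$ contraction, both being proportional to $f^a_{bc}\norm{b_a c^b \phi^c}$ after reindexing; and the part of the $W_{\mathrm{cpl}}$ contribution coming from the $c$--$b$ contraction is proportional to $f^a_{bc}\norm{\lambda_a \phi^c \phi^b}$, which vanishes since $f^a_{bc}$ is antisymmetric while $\norm{\phi^c \phi^b}$ is symmetric in $b,c$. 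I expect this condition, $D_{W_{tot}}Q^\pm_{tot}=0$, to be the main obstacle: it is vacuous in the purely bosonic BRST reduction, it is precisely what forces the form of $W_{tot}$ (the minimal coupling $W_{\mathrm{cpl}}$ and the moment-map coupling $W_M$ with their relative coefficients), and the cancellations above are not manifest — carrying them through demands careful tracking of the fermionic signs and of the $\Omega$-degree shifts in the raviolo OPEs. A useful consistency check is that, specializing $\CV$ to the free theories of Section \ref{sec:SCexamples} with their manifest flavor symmetries, $D_{W_{tot}}$ should reduce to the expected gauge-theory differential.
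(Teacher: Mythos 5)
Your proposal is correct and follows essentially the same route as the paper: the paper likewise disposes of the conformal/superpotential conditions by citing the BRST example of Section 4.5.2 of \cite{GarnerWilliams} (which you instead re-derive in more detail) and then reduces the new content to computing the OPEs of $W_{tot}$ with $Q^\pm$, finding $W_{tot}Q^-\sim 0$ and $W_{tot}Q^+\sim\Omega^1_{z-w}\bigl(\norm{c^aM_a}+\tfrac12 f^a_{bc}\norm{\lambda_ac^bc^c}\bigr)$. One small caveat: the $Q^+$ case is not literally the ``mirror image'' of the $Q^-$ case --- the contractions run through $\norm{\lambda\pd c}$ and the $Q^+$ OPE retains a nonzero $\Omega^1$ coefficient $W^{(1)}_+$ --- but since the definition only requires the $\Omega^0$ coefficients $W^{(0)}_\pm$ to vanish, your check is of the right condition and your cancellations are consistent with the paper's result.
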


\begin{proof}
	We first note that this construction is an instance of the example considered in loc. cit., so $(\CV, W)$ is indeed a conformal raviolo vertex algebra with superpotential. Moreover, $W$ is a current algebra primary, being a sum of normally-ordered products thereof, and has $S$-charge $0$.
	
	We are then left with checking the OPEs of $W_{tot}$ and $Q^\pm$. A direct computation leads to the following:
	\be
	W_{tot} Q^+ \sim \Omega^1_{z-w} \big(\norm{c^a M_a} + \tfrac{1}{2} f^a_{bc} \norm{\lambda_a c^b c^c}\big) \qquad W_{tot} Q^- \sim 0
	\ee
	from which the assertion follows.
\end{proof}

As shown in Section \ref{sec:SCexamples}, the Higgs branch primary operators of $FV^{\text{pert}}{}^{\otimes \fg}$ are simply normally-ordered products of the fermions $c^a, \lambda_a$; Higgs branch primary operators of $\CV \otimes (FV^{\text{pert}}{}^{\otimes \fg})$ are then normally-ordered products of these fermions and Higgs branch primary operators of $\CV$. The action of $D_{tot} = D_{W_{tot}}$ on the fermionic generators is given by
\be
	D_{tot} c^a = \tfrac{1}{2}f^a_{bc} \norm{c^a c^b} \qquad D_{tot} \lambda_a = f^b_{ca} \norm{c^c \lambda_b} + M_a
\ee
Similarly, the action of $D_W$ on a Higgs branch primary operator $O \in \CV$ is given by
\be
	D_{tot} O(z) = \norm{c^a(z) \bigg(\oint_z \diff x \mu_a(x) O(z)\bigg)} = \norm{c^a \{M_a, O\}}(z)
\ee
Indeed, when restricted to Higgs branch primary operators in $\CV \otimes (FV^{\text{pert}}{}^{\otimes \fg})$ we see that $D_{tot}$ agrees with Poisson bracket with $\norm{c^a M_a} + \tfrac{1}{2}f^a_{bc} \norm{\lambda_a c^b c^c}$.

Putting this together, we conclude the following:
\begin{theorem}
\label{thm:HBpertreduction}
	The Higgs branch chiral ring of $\CV \otimes (FV^{\textrm{pert}}{}^{\otimes \fg})$ together with the differential $D_{tot}$ can be identified as a dg 2-shifted Poisson algebra with the ring of functions on the derived symplectic reduction of $\CM_H[\CV]$ by $\fg$:
	\begin{equation*}
		(A_H[\CV \otimes (FV^{\textrm{pert}}{}^{\otimes \fg})], D_{tot}) = \CO_{\CM_H[\CV]/\!\!\!/\fg}
	\end{equation*}
\end{theorem}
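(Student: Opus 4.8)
The plan is to recognize $(A_H[\CV\otimes(FV^{\textrm{pert}}{}^{\otimes\fg})],D_{tot})$ as, essentially on the nose, the Chevalley--Eilenberg--Koszul (``BRST'') complex computing the functions on the derived symplectic reduction of the $2$-shifted Poisson scheme $\CM_H[\CV]$ by $\fg$, which is by definition $\CO_{\CM_H[\CV]/\!\!\!/\fg}$, and then to check that the induced $2$-shifted Poisson brackets match. Most of the computational input has already been assembled in the surrounding discussion, so the argument is mainly one of identification and of keeping track of gradings.

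\emph{Step 1 --- the underlying graded Poisson algebra.} I would first observe that the only operators of $FV^{\textrm{pert}}{}^{\otimes\fg}$ with $j=\tfrac{q}{2}$ are the normal-ordered products of the Higgs branch primaries $c^a$ and $\lambda_a$: inserting a $b_a$, a $\phi^a$, or a holomorphic derivative contributes positively to $j-\tfrac{q}{2}$, so the only way to keep this zero is to use undifferentiated $c$'s and $\lambda$'s. Thus this subspace is the free graded-commutative algebra $\C[c^a,\lambda_a]$ on these fermions, and it consists entirely of Higgs branch primaries (Proposition \ref{prop:chiralOPE}). Since a Higgs branch primary of a tensor product must saturate the BPS bound in each factor --- the equality $j_1+j_2=\tfrac{q_1+q_2}{2}$ together with $j_i\ge\tfrac{|q_i|}{2}$ forces $q_i\ge 0$ and $j_i=\tfrac{q_i}{2}$ --- and since its $FV$-component then lies in $\C[c^a,\lambda_a]$, the conditions of Corollary \ref{cor:HBP2} factor through the two tensor factors. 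This yields $A_H[\CV\otimes(FV^{\textrm{pert}}{}^{\otimes\fg})]\cong A_H[\CV]\otimes\C[c^a,\lambda_a]$ as graded-commutative algebras, with the $2$-shifted Poisson bracket restricting to the given bracket on $A_H[\CV]$, to the canonical pairing $\{c^a,\lambda_b\}=\delta^a_b$ (the computation $\{c,\lambda\}=1$ of Section \ref{sec:SCexamples}) together with $\{c^a,c^b\}=\{\lambda_a,\lambda_b\}=0$ on the new generators, and to zero between the two factors (OPEs across tensor factors are regular). Identifying $c^a$ with a basis of $\fg^*$ and $\lambda_a$ with a basis of $\fg$, both of Higgs branch $R$-charge $1$, this is precisely the underlying graded Poisson algebra of the shifted BRST complex of the Hamiltonian $\fg$-space $\CM_H[\CV]$ (with comoment map $T_a\mapsto M_a$) of Corollary \ref{cor:flavorsymmerty}: the $\lambda_a$ are the Koszul generators resolving the derived vanishing locus of the moment map, whose components $M_a$ have Higgs branch $R$-charge $2$, and the $c^a$ are the Chevalley--Eilenberg generators computing derived $\fg$-invariants.

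\emph{Step 2 --- the differential.} Because $D_{tot}=D_{W_{tot}}$ is a square-zero odd derivation (by the preceding Proposition), it is determined by its values on algebra generators, which have already been computed in the text: $D_{tot}c^a=\tfrac{1}{2}f^a_{bc}\norm{c^bc^c}$ is the Chevalley--Eilenberg differential on $\Lambda^\bullet\fg^*$; $D_{tot}O=\norm{c^a\{M_a,O\}}$ for $O\in A_H[\CV]$ is the Chevalley--Eilenberg differential for the $\fg$-module $A_H[\CV]$, the $\fg$-action being the comoment map of Corollary \ref{cor:flavorsymmerty}; and $D_{tot}\lambda_a=f^b_{ca}\norm{c^c\lambda_b}+M_a$ is the Koszul differential $\lambda_a\mapsto M_a$ together with the Chevalley--Eilenberg twist by the $\fg$-action on the Koszul generators. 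Hence $D_{tot}$ coincides on generators with the total BRST differential $D_{CE}+D_K$, so the two dg-algebras agree; the identity $D_{tot}^2=0$ reflects exactly the Jacobi identity (Theorem \ref{thm:poissonHB}) together with the moment-map identity $\{M_a,M_b\}=f^c_{ab}M_c$ (Corollary \ref{cor:flavorsymmerty}).

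\emph{Step 3 --- the Poisson structure, and the main obstacle.} We have already seen that $D_{tot}$ is a derivation of the $2$-shifted Poisson bracket on the Higgs branch chiral ring, so the left-hand side is a dg $2$-shifted Poisson algebra. The $2$-shifted Poisson bracket on $\CO_{\CM_H[\CV]/\!\!\!/\fg}$ is, by the very construction of the derived reduction, the one descending from the canonical bracket on $\CM_H[\CV]\times T^*[2](\Pi\fg[-1])$; by Step 1 it agrees with the bracket on the left on all algebra generators, hence everywhere, since both are biderivations of a free graded-commutative algebra. Thus the identification of Step 2 upgrades to an isomorphism of dg $2$-shifted Poisson algebras, which is the claim. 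I expect the step requiring the most care to be Step 1 --- tracking the cohomological degrees and shifts precisely enough to recognize $\C[c^a,\lambda_a]$ with $\{c,\lambda\}=1$ as the $2$-shifted cotangent $T^*[2](\Pi\fg[-1])$, and hence to match $D_{tot}$ with the \emph{shifted} Koszul--Chevalley--Eilenberg model of a derived symplectic reduction of a $2$-shifted Poisson scheme rather than the familiar unshifted BRST complex. The honestly computational content --- the action of $D_{tot}$ on generators --- is already in hand, so what remains is this bookkeeping and the routine verification that the differentials and brackets agree on generators.
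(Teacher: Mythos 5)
Your proposal is correct and follows essentially the same route as the paper, which states the theorem as the conclusion of the immediately preceding discussion: identify the Higgs branch primaries of the tensor product as $A_H[\CV]\otimes\C[c^a,\lambda_a]$, read off $D_{tot}$ on generators, and recognize the result as the Koszul--Chevalley--Eilenberg model of the shifted symplectic reduction. Your write-up fills in details the paper leaves implicit (the factorization of the BPS saturation and primary conditions across the tensor factors, and the matching of brackets on generators), but the underlying argument is the same.
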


\subsection{Topological Twisting}
\label{sec:toptwisting}
We saw above that an $N=2$ superconformal raviolo vertex algebra $\CV$ satisfying the BPS bound has attached to it two Poisson algebras, the Higgs and Coulomb branch chiral rings $A_H[\CV]$ and $A_C[\CV]$. In this subsection, we show that these chiral rings can often be realized cohomologically.

Heuristically, we aim to recover $A_H[\CV]$ by taking the cohomology with respect to the superconformal generator $Q^+$ as a superpotential deformation. 
We see that this can not be done without a slight modification of the $N=2$ superconformal raviolo vertex algebra: $Q^+$ has $R$-charge 1 and spin $\frac{3}{2}$, whereas a superpotential needs to have $R$-charge $2$ and spin $1$. 
Thankfully, we can use the superconformal current $\sigma$ to remedy this: we choose a new $R$-charge and spin gradings given by Higgs branch $R$-charge $R_B = R+\sigma_{(0)}$ and the Higgs branch spin $J_B = J- \tfrac{1}{2}\sigma_{(0)}$. As mentioned above, the resulting raviolo vertex algebra is still conformal (necessarily of vanishing central charge $0$), but no longer superconformal. Instead, the $N=2$ superconformal algebra reorganizes itself into the \emph{twisted $N=2$ superconformal raviolo vertex algebra}.

The twisted $N=2$ superconformal raviolo vertex algebra contains a stress tensor $\Gamma$ (with vanishing central charge), an abelian raviolo current $\sigma$ (at level $\xi/3$), and two bosonic operators $Q, \wt{Q}$ whose OPEs with $\Gamma$ and $\sigma$ are
\be
\begin{aligned}
	\Gamma (z) Q (w) & \sim \Omega^1_{z-w} Q(w) + \Omega^0_{z-w} \pd Q(w) \qquad &  \sigma (z) Q (w) & \sim \Omega^0_{z-w} Q(w)\\
	\Gamma (z) \wt{Q} (w) & \sim 2\Omega^1_{z-w} \wt{Q}(w) + \Omega^0_{z-w} \pd \wt{Q}(w) \qquad & \sigma(z) \wt{Q}(w) &\sim -\Omega^0_{z-w} \wt{Q}(w)\\
\end{aligned}
\ee
Namely, $Q$ (resp.  $\wt{Q}$) are raviolo Virasoro primaries of spin $1$ (resp. $2$), $R$-charge $2$ (resp. $0$), and $S$-charge $1$ (resp. $-1$). Their OPE with one another is given by
\be
Q (z)\wt{Q}(w) \sim \Omega^2_{z-w} \big(-\xi/3\big) + \Omega^1_{z-w} \big(-\sigma(w)\big) + \Omega^0_{z-w} \big(-\Gamma (w)\big)
\ee
We call a conformal raviolo vertex algebra $\CV$ with operators $\sigma, Q, \wt{Q}$ satisfying the above OPEs a \emph{twisted $N=2$ superconformal raviolo vertex algebra}. Of course, there is a notion of \emph{twisted $N=2$ superconformal raviolo vertex algebra with superpotential}. The following Lemma is immediate from the definitions.

\begin{lemma}
	Let $(\CV, W)$ be a twisted $N=2$ superconformal raviolo vertex algebra with superpotential, then $(\CV, Q+W)$ is a conformal raviolo vertex algebra with superpotential.
\end{lemma}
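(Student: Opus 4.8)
The plan is to check directly the three defining conditions for $Q+W$ to be a superpotential of the conformal raviolo vertex algebra $\CV$ equipped with its twisted stress tensor $\Gamma_B = \Gamma + \tfrac{1}{2}\pd\sigma$: that $Q+W$ is a bosonic vector of (Higgs branch) $R$-charge $2$, that it is a $\Gamma_B$-primary of spin $1$, and that $(Q+W)_{(0)}(Q+W)$ lies in the image of $\pd$.

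The first two conditions are immediate. In the twisted grading $Q$ is bosonic of $R$-charge $2$ and spin $1$, and $W$ --- being a superpotential of the conformal raviolo vertex algebra $(\CV,\Gamma_B)$ --- is likewise bosonic of $R$-charge $2$ and spin $1$; hence so is $Q+W$. Moreover $\Gamma_B(z)Q(w) \sim \Omega^1_{z-w}Q(w) + \Omega^0_{z-w}\pd Q(w)$ by the displayed OPEs of a twisted $N=2$ superconformal raviolo vertex algebra, and the analogous relation holds for $W$ by the definition of a conformal raviolo vertex algebra with superpotential; adding these, $Q+W$ is a $\Gamma_B$-primary of spin $1$.

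The only substantive point is that $(Q+W)_{(0)}(Q+W) \in \op{im}\pd$. Expanding bilinearly,
\[
	(Q+W)_{(0)}(Q+W) = Q_{(0)}Q + W_{(0)}W + \big(Q_{(0)}W + W_{(0)}Q\big).
\]
Here $W_{(0)}W \in \op{im}\pd$ because $W$ is a superpotential, and $Q_{(0)}Q = 0$ because the self-OPE of $Q$ is regular (the twisted counterpart of $Q^+(z)Q^+(w)\sim 0$). For the cross term I would combine two facts: first, the defining compatibility of a twisted $N=2$ superconformal raviolo vertex algebra with superpotential, which forces $W_{(0)}Q = D_W Q = 0$ (the twisted analog of the condition $W^{(0)}_+=0$); and second, the skew-symmetry of the raviolo OPE, which rewrites $W_{(0)}Q$ as $Q_{(0)}W + \pd(\cdots)$, where the correction is a finite sum of total derivatives of the higher products $Q_{(l)}W$ --- finite because $Q(z)W(w)$ has only finitely many singular terms. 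Hence $Q_{(0)}W = -\pd(\cdots)\in\op{im}\pd$, and summing the three contributions gives $(Q+W)_{(0)}(Q+W) \in \op{im}\pd$, as required.

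I do not anticipate a genuine obstacle: the lemma is essentially bookkeeping. The only thing requiring care is to work consistently with the twisted gradings and twisted stress tensor $\Gamma_B$, under which the twisted $N=2$ OPEs hold with no correction terms, and to keep track of the Koszul signs in the skew-symmetry identity (trivial here, as $Q$ and $W$ are both bosonic).
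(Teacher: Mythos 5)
Your proof is correct and fills in exactly what the paper leaves implicit (the paper offers no proof, declaring the lemma ``immediate from the definitions''): the only substantive point is the cross term, which you handle correctly via the compatibility condition $W_{(0)}Q=0$ together with the skew-symmetry of the OPE, which expresses $Q_{(0)}W$ as $W_{(0)}Q$ plus total derivatives of the higher products $W_{(l)}Q$.
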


Now suppose $\CV$ is an $N=2$ superconformal raviolo vertex algebra. It has associated to it two twisted $N=2$ superconformal raviolo vertex algebras. We denote by $\CV_B$ the twisted $N=2$ superconformal raviolo vertex algebra resulting from the above modification, i.e. with stress tensor $\Gamma_B = \Gamma + \tfrac{1}{2} \pd\sigma$, superconformal current $\sigma_B = \sigma$, and bosonic generators $Q_B = Q^+$ and $\wt{Q}_B = Q^-$. Let $D_B$ denote the differential corresponding to $Q_B$; we will call the DG conformal raviolo vertex algebra $(\CV_B, D_B)$ the \emph{$B$-twist} of $\CV$. We can similarly work with the Coulomb branch $R$-charge $R_A = R-\sigma_{(0)}$ and spin $J_A = J + \tfrac{1}{2}\sigma_{(0)}$. This is again twisted $N=2$ superconformal, now with stress tensor $\Gamma_A = \Gamma - \tfrac{1}{2}\pd \sigma$, superconformal current $\sigma_A = - \sigma$, and bosonic generators $Q_A = Q^-$ and $\wt{Q}_A = Q^+$. We denote this twisted $N=2$ superconformal raviolo vertex algebra $\CV_A$ and we call the resulting dg conformal raviolo vertex algebra $(\CV_A, D_A)$ the \emph{$A$-twist} of $\CV$.

If $(\CV, W)$ is an $N=2$ superconformal raviolo vertex algebra with superpotential, then by definition $D_W$ and $D_B$ are commuting differentials on $\CV_B$ and we can view $\CV_B$ as a bicomplex with gradings given by $R$-charge and $S$-charge $(r,q)$; $D_W$ has bidegree $(1,0)$ and $D_B$ has bidegree $(0,1)$. The total degree on this bicomplex is then the Higgs branch $R$-charge and the total differential $D_{B, tot} = D_{B} + D_W$. We call the dg conformal raviolo vertex algebra $(\CV_B, D_{B,tot})$ the \emph{$B$-twist} of $(\CV, W)$.

Our first result about the $B$-twist is the following:

\begin{proposition}
	Let $(\CV_B, D_B)$ be the $B$-twist of an $N=2$ superconformal raviolo vertex algebra, then the stress tensor $\Gamma_B$ is $D_B$-exact.
\end{proposition}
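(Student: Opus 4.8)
The plan is to exhibit an explicit $D_B$-primitive for $\Gamma_B$ and to check it against a single OPE coefficient. Recall that in the $B$-twist the operator playing the role of the superpotential is $Q_B = Q^+$, its companion is $\wt Q_B = Q^-$, the twisted stress tensor is $\Gamma_B = \Gamma + \tfrac12 \pd\sigma$, and the differential is $D_B O = (Q^+)_{(0)} O$. The claim will be that
\[
	\Gamma_B \;=\; -\,(Q^+)_{(0)}\,Q^- \;=\; -\,D_B\big(\wt Q_B\big)\,,
\]
so that $-Q^-$ is the desired primitive.

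To verify this I would simply read off the coefficient of $\Omega^0_{z-w}$ in the OPE of $Q^+$ with $Q^-$ recorded in Section~\ref{sec:ravSCA},
\[
	Q^+(z)\,Q^-(w) \;\sim\; \Omega^2_{z-w}\big(-\xi/3\big) - \Omega^1_{z-w}\,\sigma(w) + \Omega^0_{z-w}\big(-\Gamma(w) - \tfrac12\pd\sigma(w)\big)\,,
\]
which gives $(Q^+_{(0)}Q^-)(w) = -\Gamma(w) - \tfrac12\pd\sigma(w) = -\Gamma_B(w)$. Under the state-operator correspondence this is precisely the identity $(Q^+)_{(0)}Q^- = -\Gamma_B$ of states in $\CV$, establishing the claim. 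Equivalently, one may note that this is nothing but the $\Omega^0_{z-w}$ term of the defining OPE $Q_B(z)\wt Q_B(w) \sim \Omega^2_{z-w}(-\xi/3) + \Omega^1_{z-w}(-\sigma_B(w)) + \Omega^0_{z-w}(-\Gamma_B(w))$ of a twisted $N=2$ superconformal raviolo vertex algebra, so the statement is in fact built into that structure.

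A few routine bookkeeping checks round things out. The operator $Q^+$ has Higgs branch $R$-charge $R_B = 1+1 = 2$ and Higgs branch spin $J_B = \tfrac32 - \tfrac12 = 1$, so it is an honest degree-$2$, spin-$1$ operator of $\CV_B$; and its self-OPE $Q^+(z)Q^+(w)$ is regular, so $(Q^+)_{(0)}Q^+ = 0$ lies in the image of $\pd$ and $D_B = (Q^+)_{(0)}$ is a genuine differential by Lemma 4.5.2 of \cite{GarnerWilliams}. The gradings on the two sides of the claimed identity also match: $\Gamma_B$ and $(Q^+)_{(0)}Q^-$ both carry $R$-charge $1$, $S$-charge $0$, and spin $2$. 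There is no serious obstacle here — the statement is the raviolo avatar of the familiar fact that the stress tensor of a topologically twisted $N=2$ superconformal vertex algebra is BRST-exact — and the only point needing care is correctly tracking the grading shift $\Gamma \mapsto \Gamma_B = \Gamma + \tfrac12\pd\sigma$ implicit in passing to $\CV_B$, which is exactly what turns the $\Omega^0_{z-w}$ coefficient $-\Gamma - \tfrac12\pd\sigma$ into $-\Gamma_B$.
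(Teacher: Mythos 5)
Your proof is correct and is essentially the paper's argument: both read off the $\Omega^0_{z-w}$ coefficient of the $Q_B(z)\wt{Q}_B(w)$ OPE to exhibit $\wt{Q}_B$ (up to sign) as a $D_B$-primitive of $\Gamma_B$. You are in fact slightly more careful than the paper, which states $D_B\wt{Q}_B = \Gamma_B$ where the OPE convention actually gives $D_B\wt{Q}_B = -\Gamma_B$; the sign is immaterial for exactness.
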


\begin{proof}
	The stress tensor $\Gamma_B$ is the coefficient of $\Omega^0_{z-w}$ in the OPE $Q_B \wt{Q}_B$, hence $D_B \wt{Q}_B = \Gamma_B$.
\end{proof}

An immediate corollary says that non-trivial $D_B$-cohomology classes are concentrated in the sector of vanishing Higgs branch spin.
\begin{corollary}
	\label{cor:Btwistspin}
	Let $(\CV_B, D_B)$ be the $B$-twist of an $N=2$ superconformal raviolo vertex algebra. If $O \in \CV_B$ is a $D_B$-closed operator with Higgs branch spin $j_{B,O} \neq 0$, then $O$ is $D_B$-exact.
\end{corollary}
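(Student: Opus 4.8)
The plan is to exhibit an explicit contracting homotopy for the Higgs branch spin operator on the complex $(\CV_B, D_B)$. Recall that, since $\CV_B$ is a conformal raviolo vertex algebra with stress tensor $\Gamma_B$, the endomorphism $(\Gamma_B)_{(1)}$ of $\CV_B$ acts on a homogeneous operator $O$ by multiplication by its Higgs branch spin, $(\Gamma_B)_{(1)} O = j_{B,O}\, O$ (this is the raviolo analogue of $L_0 = \omega_{(1)}$ for a conformal vertex algebra, cf. Definition 4.4.2 of \cite{GarnerWilliams}). The preceding Proposition shows that $\Gamma_B = D_B \wt{Q}_B$ is $D_B$-exact as a state, with explicit primitive $\wt{Q}_B$. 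The whole point is to promote this to a statement about the grading endomorphism $(\Gamma_B)_{(1)}$, i.e. to produce an operator $h$ on $\CV_B$ with $(\Gamma_B)_{(1)} = [D_B, h]$; then any $D_B$-closed operator with nonzero Higgs branch spin is visibly $D_B$-exact.

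First I would apply the ($R$-charge preserving, and in particular linear) operation $X \mapsto X_{(1)}$ to the identity $\Gamma_B = D_B\wt{Q}_B = (Q_B)_{(0)}\wt{Q}_B$. Next, $D_B = (Q_B)_{(0)}$ is the superpotential differential attached to $Q_B = Q^+$ — which indeed is a bosonic vector of Higgs branch $R$-charge $2$ and Higgs branch spin $1$ with $(Q_B)_{(0)}Q_B = 0$, since the $Q^+$–$Q^+$ OPE is regular — and is therefore a derivation of the raviolo vertex algebra structure on $\CV_B$, in particular of all the $n$-th products. The mode commutator formula (the raviolo analogue of $[a_{(0)}, b_{(n)}] = (a_{(0)}b)_{(n)}$, which is subsumed by the associativity results of \cite{GarnerWilliams} already used in the paper) then yields
\[
	(\Gamma_B)_{(1)} = \big((Q_B)_{(0)}\wt{Q}_B\big)_{(1)} = \big[(Q_B)_{(0)}, (\wt{Q}_B)_{(1)}\big] = \big[D_B, (\wt{Q}_B)_{(1)}\big].
\]
Finally, given $O$ with $D_B O = 0$ and $j_{B,O} \neq 0$, I would evaluate both sides on $O$: the term $(\wt{Q}_B)_{(1)} D_B O$ vanishes, leaving $j_{B,O}\, O = D_B\big((\wt{Q}_B)_{(1)} O\big)$, so that $O = D_B\big(\tfrac{1}{j_{B,O}}(\wt{Q}_B)_{(1)} O\big)$ is $D_B$-exact.

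The only non-formal inputs are the two facts invoked above: that $(\Gamma_B)_{(1)}$ computes the Higgs branch spin on all of $\CV_B$ (not just on raviolo Virasoro primaries), which follows from conformality of $\CV_B$ together with the compatibility of the spin grading with $\Gamma_B$; and the mode identity $\big((Q_B)_{(0)}b\big)_{(n)} = \big[(Q_B)_{(0)}, b_{(n)}\big]$, which is exactly the derivation property of $D_B$. I expect the main (mild) obstacle to be simply making these two statements precise and citing them correctly; the sign bookkeeping is a non-issue, since the only graded commutator that appears is resolved by $D_B O = 0$, and the possible overall sign in $\Gamma_B = \pm D_B\wt{Q}_B$ is irrelevant after dividing by the nonzero scalar $j_{B,O}$.
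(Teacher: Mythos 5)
Your proposal is correct and is essentially the paper's own argument: the paper likewise writes $j_{B,O}\,O = \Gamma_{B,(1)} O = D_B\bigl(\wt{Q}_{B,(1)} O\bigr)$ for $D_B$-closed $O$, which is exactly your identity $(\Gamma_B)_{(1)} = [D_B, (\wt{Q}_B)_{(1)}]$ evaluated on $O$. You have merely made explicit the mode-commutator step that the paper leaves implicit.
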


\begin{proof}
	We use the above proposition to see that $D_B \wt{Q} = \Gamma$; the assertion follows from a direct computation:
	\be
	j_{B,O} O = \Gamma_{B,(1)} O = D_B \big(\wt{Q}_{B,(1)} O\big)
	\ee
\end{proof}

Both of the above results remain true if $(\CV, W)$ is an $N=2$ superconformal raviolo vertex algebra with superpotential and we replace $D_B$ with $D_{B,tot}$.

As $D_B$ is a derivation of the normally-ordered product, the normally-ordered product gives $D_B$-cohomology the structure of a commutative, associative algebra. We now show that there is a natural shifted Poisson bracket directly analogous to the one appearing in the Higgs branch chiral ring. If $O \in \CV_B$ is any operator, we consider a second operator obtained by acting with $\wt{Q}_B$:
\be
O^{[1]}(z) := \oint_z \diff x \wt{Q}_B(x) O(z) = (\wt{Q}_{B,(0)} O)(z)
\ee
called the \emph{holomorphic descendant}, or simply \emph{descendant}, of $O$. The operator $O^{[1]}$ has the opposite (totalized) parity of $O$, Higgs branch $R$-charge $r_B - 1$, and Higgs branch spin $j_B + 1$, where $r_B$ and $j_B$ are the Higgs branch $R$-charge and spin of $O$. If $O$ is a Higgs branch primary operator, we see that $O^{[1]} = \Psi_O$.

It is immediate that $O^{[1]}(z)$ satisfies the \emph{holomorphic descent equation}
\be
D_B O^{[1]} = \pd O - (D_B O)^{[1]}
\ee
which is a direct (holomorphic) analog of Witten's (topological) descent equation \cite{WittenTQFT}. With descendants in hand, we define the following binary operation: 
\be
\{\!\{O_1, O_2\}\!\}(z) := \tfrac{1}{2} \oint_z \diff x O^{[1]}_1(x) O_2(z) - (-1)^{|O_1||O_2|} O^{[1]}_2(x) O_1(z)
\ee
Note that $\{\!\{-,-\}\!\}$ decreases Higgs branch $R$-charge by 2 and preserves Higgs branch spin. Moreover, because $\wt{Q}_B$ has a regular OPE with itself, it follows that $\wt{Q}_{B,(0)}{}^2 = 0$ and hence the descendant of a descendant necessarily vanishes:
\be
O^{[2]}(z) := (\wt{Q}_{B,(0)} O^{[1]})(z) = 0
\ee

\begin{theorem}
\label{thm:twistingHB}
	Let $\CV$ be an $N=2$ superconformal raviolo vertex algebra and let $(\CV_B, D_B)$ be its $B$-twist. The bracket $\{\!\{-,-\}\!\}$ equips $D_B$-cohomology with the structure of a 2-shifted Poisson algebra.
\end{theorem}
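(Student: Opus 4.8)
The plan is to mirror the proof of Theorem~\ref{thm:poissonHB} step for step, with every use of the BPS bound there replaced here by the observation that the relevant terms become $D_B$-exact on cohomology; structurally the statement is the raviolo incarnation of the Lian--Zuckerman theorem \cite{LianZuckerman} (see also \cite{WittenZwiebach, PenkavaSchwarz, WuZhu}) that the cohomology of a topological vertex algebra carries a Batalin--Vilkovisky, hence shifted-Poisson, bracket, with $Q_B = Q^+$ playing the role of the BRST current and $\wt{Q}_B = Q^-$ that of the weight-two $b$-ghost. Three facts will be used throughout. Since $D_B O = Q_{B,(0)} O$ and $O^{[1]} = \wt{Q}_{B,(0)} O$ are both zeroth modes of fields, the raviolo analog of Borcherds' commutator formula (a consequence of associativity, Theorem~3.3.1 of \cite{GarnerWilliams}) makes $D_B$ and $\wt{Q}_{B,(0)}$ graded derivations of every $n$-th mode product. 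The self-OPE of $\wt{Q}_B$ is regular, so $\wt{Q}_{B,(0)}^2 = 0$, i.e.\ $O^{[2]} = 0$. Finally, the $\Omega^0_{z-w}$-coefficient of $Q_B(z)\wt{Q}_B(w)$ is $-\Gamma_B(w)$; by the commutator formula this yields both the descent equation $D_B O^{[1]} = \pd O - (D_B O)^{[1]}$ and the identification of $\pd$ (up to sign) with $[D_B,\wt{Q}_{B,(0)}]$, so that $\pd O$ is $D_B$-exact whenever $O$ is $D_B$-closed.

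First I would verify that $\{\!\{-,-\}\!\}$ descends to $D_B$-cohomology by showing $D_B$ is a graded derivation of it. Writing $\{\!\{O_1,O_2\}\!\} = \tfrac12\big((O_1^{[1]})_{(0)}O_2 - (-1)^{|O_1||O_2|}(O_2^{[1]})_{(0)}O_1\big)$ and distributing $D_B$ via the derivation property, the terms in which $D_B$ lands on a descendant are rewritten using $D_B O_i^{[1]} = \pd O_i - (D_B O_i)^{[1]}$, and the $(\pd O_i)_{(0)}$ contributions drop out because $(\pd A)_{(0)} = 0$ (translation axiom). Collecting terms — the Koszul signs working out exactly as in the corollary to the superpotential discussion above, where $D_W\{O_1,O_2\} = \{D_W O_1,O_2\} + (-1)^{|O_1|}\{O_1, D_W O_2\}$ — gives $D_B\{\!\{O_1,O_2\}\!\} = \{\!\{D_B O_1, O_2\}\!\} + (-1)^{|O_1|}\{\!\{O_1, D_B O_2\}\!\}$, so the bracket of two cocycles is a cocycle whose class depends only on the classes of the inputs. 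Skew-symmetry requires nothing beyond the manifest antisymmetry of the definition: already at the cochain level $\{\!\{O_1,O_2\}\!\} = -(-1)^{|O_1||O_2|}\{\!\{O_2,O_1\}\!\}$, which is the correct sign for a $2$-shifted bracket since $(|O_1|-2)(|O_2|-2)\equiv|O_1||O_2|\bmod 2$.

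Next I would establish the Leibniz rule $\{\!\{O_1,\norm{O_2 O_3}\}\!\} = \norm{\{\!\{O_1,O_2\}\!\}\,O_3} + (-1)^{|O_1||O_2|}\norm{O_2\,\{\!\{O_1,O_3\}\!\}}$ in cohomology. The manipulations are those in the proof of Theorem~\ref{thm:poissonHB}: Corollary~2.2.5 of \cite{GarnerWilliams} to commute a contour integral past a normally-ordered product, together with the fact that $\wt{Q}_{B,(0)}$ is a derivation of the normally-ordered product, giving $\norm{O_2 O_3}^{[1]} = \norm{O_2^{[1]} O_3} + (-1)^{|O_2|}\norm{O_2 O_3^{[1]}}$. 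The one step in that argument which used the BPS bound — discarding the coefficients of $\Omega^n_{z-w}$ for $n>0$ in OPEs such as that of $\norm{O_2^{[1]} O_3}(x)$ with $O_1(w)$, which licensed reordering the contour integral — is instead handled by observing that, on $D_B$-closed inputs, those coefficients differ from $D_B$-exact expressions only by terms of the form $\pd(\text{something }D_B\text{-closed})$, hence are themselves $D_B$-exact; the cochain-level discrepancy therefore vanishes in cohomology.

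Finally comes the Jacobi identity. The key input is that, because $\wt{Q}_{B,(0)}$ is a derivation and $\wt{Q}_{B,(0)}^2 = 0$, the descendant of a bracket is (up to an overall sign) the symmetrized bilinear $(O_1^{[1]})_{(0)}O_2^{[1]}$ in the descendants alone. Substituting this into $\{\!\{\{\!\{O_1,O_2\}\!\},O_3\}\!\}$ expands the double bracket into iterated zeroth-mode products, exactly as the corresponding double contour integral is expanded in the proof of Theorem~\ref{thm:poissonHB} (nested $2$-spheres, with the inner variable small about the outer one). Summing over the three cyclic permutations with Koszul signs and using the Lie structure on zeroth modes $[A_{(0)},B_{(0)}] = (A_{(0)}B)_{(0)}$ — again a special case of the commutator formula, and the avatar of associativity here — the cyclic sum telescopes to zero up to correction terms of the same $D_B$-exact type encountered in the Leibniz step. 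The main obstacle, and essentially the only thing separating this from Theorem~\ref{thm:poissonHB}, is precisely this bookkeeping: absent the BPS bound the naive cochain identities fail, and one must check that every correction term produced by a nonvanishing higher OPE coefficient, or by the failure of $(O_1^{[1]})_{(0)}O_2$ to be strictly skew, lies in the image of $D_B$; here the homotopy $\pd = [D_B,\wt{Q}_{B,(0)}]$ and the relation $\wt{Q}_{B,(0)}^2 = 0$ do the real work. Together with the commutativity and associativity of the normally-ordered product on $D_B$-cohomology (Proposition~2.2.6 of \cite{GarnerWilliams}), this exhibits $\{\!\{-,-\}\!\}$ as a $2$-shifted Poisson bracket.
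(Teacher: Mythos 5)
Your overall strategy is the paper's: establish the descent equation $D_B O^{[1]} = \pd O - (D_B O)^{[1]}$, show $D_B$ is a derivation of the bracket (the $\pd O_i$ terms dropping out because $(\pd A)_{(0)}=0$), get skew-symmetry for free from the definition, derive Jacobi from associativity of nested contour integrals via the formula for $\{\!\{O_1,O_2\}\!\}^{[1]}$, and accept that the Leibniz rule only holds in cohomology. One minor discrepancy: the paper's Jacobi identity holds exactly at the cochain level (as in Lian--Zuckerman), not merely up to $D_B$-exact corrections as you hedge; this costs you nothing for the statement in cohomology, but it signals the computation was not carried through.

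The genuine gap is in the Leibniz rule. You correctly locate the difficulty --- without the BPS bound, the contour reorderings that drove the proof of Theorem~\ref{thm:poissonHB} pick up contributions from the coefficients of $\Omega^l_{z-w}$, $l>0$, in OPEs such as that of $O_1^{[1]}$ with $O_2$ --- but your proposed fix, that these contributions are ``of the form $\pd(\text{something }D_B\text{-closed})$, hence $D_B$-exact,'' is asserted rather than proved, and as stated it does not apply: the fields whose higher OPE coefficients enter are the descendants $O_i^{[1]}$, and these are \emph{not} $D_B$-closed when the $O_i$ are (one has $D_B O_i^{[1]}=\pd O_i$). The paper resolves this by exhibiting an explicit homotopy $n(O_1,O_2,O)$, a double-contour expression built from $\oint_w \diff z\, \Omega^l_{z-w}\, O_1^{[1]}(z)$ paired against $\oint_w \diff x\, (x-w)^{l+1} O_2^{[1]}(x)\, O(w)$, summed over $l\geq 0$ with weights $\tfrac{1}{l+1}$, whose $D_B$-differential reproduces the failure of the Leibniz identity modulo terms involving $D_B O_i$. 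Producing (or at least establishing the existence of) such a homotopy is the one piece of content in this theorem that is not already in Theorem~\ref{thm:poissonHB}, and your proposal does not supply it.
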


If we consider the $B$-twist of an $N=2$ superconformal raviolo vertex algebra with superpotential $(\CV, W)$, we have an analogous descent equation:
\be
	D_{B,tot} O^{[1]} = \pd O - (D_{B,tot} O)^{[1]}
\ee
The following proof is unchanged if we replace $D_B$ by $D_{B,tot}$, i.e. $H^\bullet(\CV_B, D_{B,tot})$ also has the structure of a 2-shifted Poisson algebra.

\begin{proof}
	The necessary skew-symmetry and Higgs branch $R$-charge are immediate from the definition. Much of the proof will be the same as that of Theorem \ref{thm:poissonHB} and the analogous result for $N=2$ superconformal vertex algebras proven in Section 2.3 of \cite{LianZuckerman}, so we will be brief in those portions to avoid repetition.
	
	We first show that $D_B$ is a derivation of the bracket:
	\be
	\begin{aligned}
		D_B \{\!\{O_1, O_2\}\!\}(z) & = \{\!\{D_B O_1, O_2\}\!\}(z) + (-1)^{|O_1|} \{\!\{O_1, D_B O_2\}\!\}(z)\\
		& -\tfrac{1}{2} \oint_z \diff x \bigg(\big(\pd O_1(x)) O_2(z) - (-1)^{|O_1||O_2|} \big(\pd O_2(x)\big) O_1(z)\bigg)\\
		& = \{\!\{D_B O_1, O_2\}\!\}(z) + (-1)^{|O_1|} \{\!\{O_1, D_B O_2\}\!\}(z)
	\end{aligned}
	\ee
	The first equality uses the descent equation, and the second equality follows from the fact that can be no term proportional to $\Omega^0_{x-z}$ in the OPE $\pd O(x) O'(z)$ for any $O, O'$.
	
	To show this bracket satisfies the Jacobi identity, we need the descendant of the bracket $\{\!\{O_1, O_2\}\!\}$:
	\be
	\begin{aligned}
		\{\!\{O_1, O_2\}\!\}^{[1]}(z) & = \tfrac{1}{2}(-1)^{|O_1|} \oint_z \diff x O_1^{[1]}(x) O_2^{[1]}(z)\\
		& + \tfrac{1}{2} (-1)^{|O_1| + (|O_1|+1)(|O_2|+1)} \oint_z \diff x O_2^{[1]}(x) O_1^{[1]}(z)
	\end{aligned}
	\ee
	cf. Eq. \eqref{eq:superpartnerbracket}. With this expression, $\{\!\{\{\!\{O_1, O_2\}\!\}, O_3\}\!\}$ is given by
	\be
	\begin{aligned}
		\{\!\{\{\!\{O_1, O_2\}\!\}, O_3\}\!\}(w) & = \tfrac{1}{4}(-1)^{|O_1|}\oint_w \diff z \oint_z \diff x O_1^{[1]}(x) O_2^{[1]}(z) O_3(w)\\
		& \hspace{-0.5cm} + \tfrac{1}{4}(-1)^{(|O_1|+1)(|O_2|+1)+|O_1|}\oint_w \diff z \oint_z \diff x O_2^{[1]}(x) O_1^{[1]}(z) O_3(w)\\
		& \hspace{-0.5cm} + \tfrac{1}{4}(-1)^{|O_3|(|O_1|+|O_2|+1)}\oint_w \diff z \oint_w \diff x O_3^{[1]}(z) O_1^{[1]}(x) O_2(w)\\
		& \hspace{-0.5cm} - \tfrac{1}{4}(-1)^{|O_3|(|O_1|+|O_2|+1) + |O_1||O_2|}\oint_w \diff z \oint_w \diff x O_3^{[1]}(z) O_2^{[1]}(x) O_1(w)
	\end{aligned}
	\ee
	As in the proof of Theorem \ref{thm:poissonHB}, we see that this vanishes after summing over cyclic permutations (with suitable signs) due to associativity. Note that the Jacobi identity holds exactly, cf. Section 2.3 of \cite{LianZuckerman}.
	
	Finally, to show that $\{\!\{-, O\}\!\}$ is a derivation of the normally-ordered product, we need the descendant of a normally-ordered product:
	\be
	\norm{O_1 O_2}^{[1]} = \norm{O_1^{[1]} O_2} + (-1)^{|O_1|}\norm{O_1 O_2^{[1]}}
	\ee
	cf. Proposition \ref{prop:chiralOPE}. Unlike the Jacobi identity, we find that $\{\!\{-, O\}\!\}$ is only a derivation of the normally-ordered product up to homotopy. A suitable homotopy is given by
	\be
	\begin{aligned}
		n(O_1, O_2, O)(w) & = \tfrac{1}{2}\sum\limits_{l \geq 0} \frac{1}{l+1}\bigg((-1)^{|O_1|}\oint_w \diff z \Omega^l_{z-w} O^{[1]}_1(z) \oint_w \diff x (x-w)^{l+1} O^{[1]}_2(x) O(w)\\
		& \qquad +(-1)^{|O_1||O_2|}\oint_w \diff z \Omega^l_{z-w} O^{[1]}_2(z) \oint_w \diff x (x-w)^{l+1} O^{[1]}_1(x) O(w) \bigg)\\
	\end{aligned}
	\ee
	from which a straight-forward computation leads to
	\be
	\begin{aligned}
		D_B n(O_1, O_2, O) & = \{\!\{\norm{O_1 O_2}, O\}\!\} -\norm{O_1\{\!\{O_2, O\}\!\}} - (-1)^{|O||O_2|}\norm{\{\!\{O_1, O\}\!\}O_2}\\
		& \hspace{-1cm} + n(D_B O_1, O_2, O) + (-1)^{|O_1|}n(O_1, D_B O_2, O) - (-1)^{|O_1|+|O_2|} n(O_1, O_2, D_B O) 
	\end{aligned}
	\ee
	In the same way, $\{\!\{O, -\}\!\}$ is only a derivation of the normally-ordered product up to homotopy; by the skew-symmetry of the bracket, a suitable homotopy is given by $(-1)^{|O|(|O_1|+|O_2|)} n(O_1, O_2, O)$.
\end{proof}

We note that the bracket described in \cite{LianZuckerman} is such that $\{\!\{O, -\}\!\}$ is a derivation without the need of a homotopy, whereas the skew-symmetry of the bracket requires a homotopy. We have chosen a bracket that is skew-symmetric without the need for a homotopy, in exchange for the need of a homotopy in this Leibniz rule.

As we have seen, there are now four 2-shifted Poisson algebras associated to any $N=2$ superconformal raviolo vertex algebra $\CV$ satisfying the BPS bound: the chiral rings $A_H[\CV], A_C[\CV]$ introduced in Section \ref{sec:chiralrings} and the cohomologies $H^\bullet(\CV_B, D_B)$, $H^\bullet(\CV_A, D_A)$. Focusing on the Higgs branch/$B$-twist, we see that the product structure on these rings arises from normally-ordered product on $\CV$ and the brackets are quite similar in form, so it is reasonable to expect that these rings are related to one another. Indeed, we have the following.

\begin{proposition}
\label{prop:naturalmap}
	There is a natural map $A_H[\CV] \to H^\bullet(\CV_B, D_B)$ of 2-shifted Poisson algebras taking a Higgs branch primary operator $O$ to its $D_B$-cohomology class $[O]$. This map is an isomorphism if all operators with vanishing Higgs branch spin $j_B = 0$ are Higgs branch primary operators.
\end{proposition}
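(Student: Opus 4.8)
The plan is to separate the two claims: first that $O\mapsto[O]$ is a morphism of $2$-shifted Poisson algebras, which holds unconditionally, and then that it is bijective under the stated hypothesis.

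\emph{The morphism.} Well-definedness is immediate: a Higgs branch primary $O$ satisfies $Q^+_{(0)}O=0$ by Corollary~\ref{cor:HBP2}, so $D_BO=Q^+_{(0)}O=0$ and the class $[O]\in H^\bullet(\CV_B,D_B)$ is defined; the resulting linear map is graded for the Higgs branch $R$-charge. It is a ring map because, by Proposition~\ref{prop:chiralOPE}, the product on $A_H[\CV]$ is $O_1\cdot O_2=\norm{O_1O_2}$, again a Higgs branch primary, while the product on $H^\bullet(\CV_B,D_B)$ is induced by the very same normally-ordered product, so $[O_1]\cdot[O_2]=[\norm{O_1O_2}]$. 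For the bracket I would use the identity $O^{[1]}=\Psi_O$ for Higgs branch primaries recorded above: substituting it into the definition of $\{\!\{-,-\}\!\}$ shows that, restricted to Higgs branch primaries, $\{\!\{O_1,O_2\}\!\}$ equals \emph{on the nose} the chiral-ring bracket $\{O_1,O_2\}$ of Theorem~\ref{thm:poissonHB} (this is the step where I would be most careful about Koszul signs, though it is pure bookkeeping given the formulas already in hand), and the latter is again a Higgs branch primary. Since $\{\!\{-,-\}\!\}$ descends to $D_B$-cohomology (Theorem~\ref{thm:twistingHB}), this gives $\{\!\{[O_1],[O_2]\}\!\}=[\{O_1,O_2\}]$, so the map intertwines brackets; the homotopy-Leibniz ambiguity of $\{\!\{-,-\}\!\}$ at the cochain level is harmless precisely because on Higgs branch primaries the bracket is strict.

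\emph{Bijectivity.} The structural input is that $D_B$ preserves the Higgs branch spin grading $J_B=\Gamma_{B,(1)}$. Indeed $Q_B=Q^+$ is a raviolo Virasoro primary of Higgs branch spin $1$ for $\Gamma_B=\Gamma+\tfrac12\pd\sigma$, i.e.\ $\Gamma_B(z)Q^+(w)\sim\Omega^1_{z-w}Q^+(w)+\Omega^0_{z-w}\pd Q^+(w)$, and the Borcherds-type mode commutator (cf.\ \cite{GarnerWilliams}) gives $[\Gamma_{B,(1)},Q^+_{(0)}]=(\pd Q^+)_{(1)}+(Q^+)_{(0)}=-Q^+_{(0)}+Q^+_{(0)}=0$. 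Hence $\CV_B$ is a direct sum of subcomplexes indexed by $j_B$, and Corollary~\ref{cor:Btwistspin} says the ones with $j_B\neq0$ are acyclic, so $H^\bullet(\CV_B,D_B)=H^\bullet(\CV_B^{(0)},D_B)$, where $\CV_B^{(0)}\subset\CV_B$ is the Higgs-branch-spin-$0$ subspace. Now I invoke the hypothesis: if every operator of vanishing Higgs branch spin is a Higgs branch primary, then every element of $\CV_B^{(0)}$ is $D_B$-closed, so $D_B$ vanishes identically on $\CV_B^{(0)}$ and $H^\bullet(\CV_B^{(0)},D_B)=\CV_B^{(0)}$. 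On the other hand, Lemma~\ref{lem:HBP} gives $j_B=j-\tfrac q2=0$ for any Higgs branch primary, so $A_H[\CV]=\CV_B^{(0)}$ as graded vector spaces; under these identifications the natural map is the identity, hence bijective, and it is an isomorphism of $2$-shifted Poisson algebras by the first part. The Coulomb branch/$A$-twist statement follows by the same argument with $Q^+\leftrightarrow Q^-$ and $\sigma\leftrightarrow-\sigma$.

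The main obstacle, such as it is, is organizational rather than conceptual: one must get the descendant-to-superpartner comparison $O^{[1]}=\Psi_O$ and all the attendant signs exactly right so that the chiral-ring bracket and the cohomological bracket literally agree on representatives. The two genuinely external facts doing the heavy lifting are Corollary~\ref{cor:Btwistspin} (concentration of $D_B$-cohomology in $j_B=0$) and the spin-grading compatibility of $D_B$ established above.
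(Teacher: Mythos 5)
Your proof is correct and follows essentially the same route as the paper: Higgs branch primaries are $D_B$-closed, the products agree via the normally-ordered product, the brackets agree via $\Psi_O = O^{[1]}$, and bijectivity follows from Corollary~\ref{cor:Btwistspin} once cohomology is concentrated in $j_B=0$. Your explicit check that $[\Gamma_{B,(1)},Q^+_{(0)}]=0$, so that $\CV_B$ splits into spin subcomplexes and no nonzero element of the $j_B=0$ piece can be exact, is a welcome elaboration of a step the paper leaves implicit.
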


\begin{proof}
	Any Higgs branch primary operator is necessarily $D_B$-closed, so the statement makes sense at the level of commutative, associative algebras. To see that this is a shifted Poisson morphism, we note that if $O_1, O_2$ are Higgs branch primary operators then $\{O_1, O_2\} = \{\!\{O_1, O_2\}\!\}$ because $\Psi_O = O^{[1]}$ for such operators.
	
	Now suppose all operators satisfying $j_B = 0$ are Higgs branch primary operators. Corollary \ref{cor:Btwistspin} implies that any $D_B$-closed operator $O$ that does not satisfy $j_B = 0$ is $D_B$-exact. In particular, any non-zero cohomology class can be represented by a Higgs branch primary operator, providing the desired inverse.
\end{proof}

All operators satisfying $j_B = 0$ (resp. $j_A = 0$) were Higgs (resp. Coulomb) branch primary operators for each of the examples in Section \ref{sec:SCexamples} except $FV^\text{pert}$, where the operators $c$ and $\phi$ have $j_A = 0$ but neither is a Coulomb branch primary operator. Applying this proposition to those examples yields the following.

\begin{corollary}
	Let $\CV$ be one of the examples $FH$, $FV$, or $SFC$ in Section \ref{sec:SCexamples}. The Higgs and Coulomb branch chiral rings of $\CV$ are isomorphic to the cohomologies of its $B$- and $A$-twists, respectively.
	\begin{equation*}
		A_H[\CV] \simeq H^\bullet(\CV_B, D_B) \qquad A_C[\CV] \simeq H^\bullet(\CV_A, D_A)
	\end{equation*}
\end{corollary}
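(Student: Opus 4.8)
The plan is to derive the corollary from Proposition~\ref{prop:naturalmap}: for each $\CV$ among $FH$, $FV$, $SFC$ it suffices to verify the hypothesis of that proposition on the $B$-side, namely that every operator of vanishing Higgs branch spin $j_B$ is a Higgs branch primary operator, and --- by the mirror version of the same argument --- on the $A$-side, that every operator of vanishing Coulomb branch spin $j_A$ is a Coulomb branch primary operator. The observation that makes this tractable is that each of these raviolo vertex algebras is strongly generated by the explicit fields recorded in Section~\ref{sec:SCexamples}, so every operator is a linear combination of normally-ordered monomials in those generators and their $\pd$-derivatives; since $J_B = J - \tfrac{1}{2}\sigma_{(0)}$ is additive under normally-ordered products and is raised by one under $\pd$ (because $[\pd,\sigma_{(0)}] = [G_0,S_0] = 0$), an operator can have $j_B = 0$ only if it is a normally-ordered monomial, with no derivatives, in the generators that themselves have $j_B = 0$, and similarly for $j_A$.

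First I would tabulate $j_B = j - \tfrac{1}{2} q$ and $j_A = j + \tfrac{1}{2} q$ for the generators from the spins $j$ and $S$-charges $q$ listed in Section~\ref{sec:SCexamples}. For $FH$ one gets $j_B(Z^\alpha) = 0$, $j_B(\psi_\alpha) = 1$ and $j_A(Z^\alpha) = j_A(\psi_\alpha) = \tfrac{1}{2}$, so the $j_B = 0$ operators are exactly the polynomials in the $Z^\alpha$, which are precisely the Higgs branch primaries generating $A_H[FH]$, while the only $j_A = 0$ operator is $1$, the unique Coulomb branch primary. For $SFC$ the roles are reversed, $j_B(\eta) = 0$, $j_B(X) = 1$, $j_A(X) = j_A(\eta) = \tfrac{1}{2}$, so the $j_B = 0$ operators are the polynomials in $\eta$ (the Higgs branch chiral ring) and again only $1$ has $j_A = 0$. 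In both cases Proposition~\ref{prop:naturalmap} applies on the nose on both sides, giving the two isomorphisms of $2$-shifted Poisson algebras.

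The remaining case, $FV$, is where the only genuine work lies, and I expect it to be the main obstacle. Here $j_B(\lambda) = 0$, $j_B(\phi) = j_B(\nu) = 1$, $j_B(V_\pm) = 0$, while $j_A(\phi) = j_A(V_\pm) = 0$ and $j_A(\lambda) = j_A(\nu) = 1$. On the Coulomb side the hypothesis holds exactly: the $j_A = 0$ operators are the normally-ordered monomials in $\phi$ and $V_\pm$, which are exactly the generators of the ring of functions on $T^*[2]\C^\times$, and each such monomial is a Coulomb branch primary operator by the Coulomb analog of Proposition~\ref{prop:chiralOPE}, so $A_C[FV]\simeq H^\bullet(FV_A, D_A)$ follows directly. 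On the Higgs side, by contrast, the monopoles $V_\pm$ and their normally-ordered powers also have $j_B = 0$ without being Higgs branch primary, so the literal hypothesis of Proposition~\ref{prop:naturalmap} fails and one must instead compute the $D_B$-cohomology of the $j_B = 0$ subspace directly. That subspace is spanned by the normally-ordered monomials in $\lambda$ and $V_\pm$; using $D_B\lambda = Q^+_{(0)}\lambda = 0$ and $D_B V_\pm = Q^+_{(0)}V_\pm = \pm\norm{\lambda V_\pm}$, the Leibniz rule, and the relations of the raviolo lattice vertex algebra $\CV_\Z$ (which make the $V_\pm$ invertible), one checks that this cohomology is spanned by the classes of $1$ and $\lambda$. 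Combined with Corollary~\ref{cor:Btwistspin}, which forces every $D_B$-closed operator with $j_B\neq 0$ to be $D_B$-exact, this gives $H^\bullet(FV_B, D_B)\simeq \C[\lambda] = A_H[FV]$ and shows that the always-defined $2$-shifted Poisson morphism of Proposition~\ref{prop:naturalmap} is the asserted isomorphism. Collecting the three examples then yields the corollary.
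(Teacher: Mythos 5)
Your proposal is correct, and for $FH$ and $SFC$ (and the $A$-side of $FV$) it is exactly the paper's argument: tabulate $j_B$ and $j_A$ on the strong generators, note that $j_B$, $j_A$ are additive under normally-ordered products and raised by $\pd$, conclude that every $j_B=0$ (resp.\ $j_A=0$) operator is a Higgs (resp.\ Coulomb) branch primary, and invoke Proposition \ref{prop:naturalmap}. Where you genuinely depart from the paper is the $B$-twist of $FV$. The paper simply asserts that all $j_B=0$ operators in $FV$ are Higgs branch primaries and applies the proposition; you instead observe that the monopoles $V_\pm$ have $j=q=0$, hence $j_B=0$, yet fail to be Higgs branch primaries because $Q^+(z)V_\pm(w)\sim \pm\Omega^0_{z-w}\norm{\lambda V_\pm}(w)\neq 0$, so the literal hypothesis of Proposition \ref{prop:naturalmap} is not satisfied. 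Your reading is consistent with the paper's own OPE tables (and with its statement that $V_\pm$ saturate the BPS bound), so this looks like a gap you have correctly identified and repaired rather than a misreading: your direct computation of the $D_B$-cohomology of the $j_B=0$ subcomplex --- $D_B V_n = n\norm{\lambda V_n}$, $D_B\norm{\lambda V_n}=0$, leaving only $1$ and $\lambda$, with Corollary \ref{cor:Btwistspin} killing everything in $j_B\neq 0$ --- recovers $A_H[FV]\simeq\C[\lambda]$ and shows the canonical map of Proposition \ref{prop:naturalmap} is still an isomorphism. The cost of your route is an extra explicit cohomology computation in the lattice sector; what it buys is that the corollary no longer rests on a hypothesis that, at least as the spins and $S$-charges of $V_\pm$ are recorded in Section \ref{sec:SCexamples}, does not actually hold.
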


Although $FV^\text{pert}$ doesn't satisfy the above condition, we can still show the above map is an isomorphism.

\begin{proposition}
	The Higgs and Coulomb branch chiral rings of $FV^{\textrm{pert}}$ are isomorphic to the cohomologies of its $B$- and $A$-twists, respectively.
	\begin{equation*}
		A_H[FV^{\textrm{pert}}] \simeq H^\bullet(FV^{\textrm{pert}}_B, D_B) \qquad A_C[FV^{\textrm{pert}}] \simeq H^\bullet(FV^{\textrm{pert}}_A, D_A)
	\end{equation*}
\end{proposition}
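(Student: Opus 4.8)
The plan is to handle the $B$- and $A$-twists separately, relying on the explicit OPEs of $FV^{\textrm{pert}}$ recorded in Section~\ref{sec:SCexamples}.

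For the $B$-twist, recall that $Q_B = Q^+ = \norm{\lambda\pd c}$ and $\wt{Q}_B = Q^- = \norm{b\phi}$, so that the differential $D_B = Q^+_{(0)}$ acts on the strong generators by $D_B c = 0$, $D_B \lambda = 0$, $D_B b = \pd\lambda$ and $D_B\phi = \pd c$, which one reads off as the $\Omega^0_{z-w}$-coefficients of the OPEs of $Q^+$ with $c,\lambda,b,\phi$. First I would pin down the subspace of $FV^{\textrm{pert}}_B$ of vanishing Higgs branch spin: the generators $b,c,\phi,\lambda$ have Higgs branch spins $1,0,1,0$ and every holomorphic derivative raises $j_B$ by one, so this subspace is spanned, under the normally-ordered product, by $1$, $c$, $\lambda$ and $\norm{c\lambda}$. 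All four are Higgs branch primary operators --- $c$ and $\lambda$ by Section~\ref{sec:SCexamples}, and $\norm{c\lambda}$ by Proposition~\ref{prop:chiralOPE} --- so the hypothesis of Proposition~\ref{prop:naturalmap} is satisfied and the natural map $A_H[FV^{\textrm{pert}}] \to H^\bullet(FV^{\textrm{pert}}_B, D_B)$ is already an isomorphism of $2$-shifted Poisson algebras.

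For the $A$-twist we cannot appeal to Proposition~\ref{prop:naturalmap}: the operators $c$ and $\phi$ have $j_A = 0$ but are not Coulomb branch primaries (indeed $D_A c = \phi \neq 0$). Instead I would compute $H^\bullet(FV^{\textrm{pert}}_A, D_A)$ directly and match it against $A_C[FV^{\textrm{pert}}] = \C$ from Section~\ref{sec:SCexamples}. Now $Q_A = Q^- = \norm{b\phi}$ and $\wt{Q}_A = Q^+$, so $D_A = Q^-_{(0)}$ acts by $D_A c = \phi$, $D_A\lambda = b$, $D_A b = 0$, $D_A\phi = 0$. Since $\Gamma_A$ is $D_A$-exact --- being, up to sign, the $\Omega^0_{z-w}$-coefficient of $Q_A(z)\wt{Q}_A(w)$, exactly as in the proof of Corollary~\ref{cor:Btwistspin} --- the $A$-twist analog of that corollary tells us $H^\bullet(FV^{\textrm{pert}}_A, D_A)$ is carried by the subspace of vanishing Coulomb branch spin $j_A$. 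Because $b,c,\phi,\lambda$ have $j_A$-values $1,0,0,1$ and derivatives raise $j_A$, that subspace is spanned by the normally-ordered monomials $\norm{\phi^n}$ and $\norm{\phi^n c}$ for $n\geq 0$, and $D_A$ restricts to it with $D_A\norm{\phi^n} = 0$ and $D_A\norm{\phi^n c}$ equal to a nonzero multiple of $\norm{\phi^{n+1}}$, using that $D_A$ is an odd derivation of the normally-ordered product with $D_A\phi = 0$, $D_A c = \phi$. The restricted complex is therefore acyclic outside the vacuum sector, so $H^\bullet(FV^{\textrm{pert}}_A, D_A) = \C = A_C[FV^{\textrm{pert}}]$, the natural map $1\mapsto[1]$ being the isomorphism.

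The OPE reads and the derivation identities are routine; the one point that needs care --- the main, if modest, obstacle --- is the structural input that $FV^{\textrm{pert}}$ is spanned by normally-ordered monomials in $b,c,\phi,\lambda$ and their holomorphic derivatives, so that the $j_B=0$ and $j_A=0$ subspaces are genuinely as small as claimed and $\norm{\phi^{n+1}}\neq 0$ for every $n$. This follows from the construction of the free-field systems $FC^{(k)}_r$ in \cite{GarnerWilliams}, each of which is, under the normally-ordered product, freely generated by its generators and their derivatives; granting this, the $A$-twist computation can alternatively be phrased as a Koszul argument, $D_A$ pairing the generator sets $\{\pd^k c\}$ with $\{\pd^k\phi\}$ and $\{\pd^k\lambda\}$ with $\{\pd^k b\}$ bijectively and leaving a contractible complex.
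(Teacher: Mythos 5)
Your proposal is correct and follows essentially the same route as the paper: the $B$-twist is handled by checking that every $j_B=0$ operator is a Higgs branch primary and invoking Proposition \ref{prop:naturalmap}, while the $A$-twist is computed directly on the $j_A=0$ subspace spanned by normally-ordered products of $\phi$ and $c$, where $D_A c = \phi$, $D_A\phi = 0$ yields a (Koszul-type) complex with cohomology $\C$. Your write-up merely adds explicit detail (the list $1,c,\lambda,\norm{c\lambda}$ and the freeness of the generating set) that the paper leaves implicit.
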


\begin{proof}
	As all operators in $FV^\text{pert}$ satisfying $j_B = 0$ are Higgs branch primary operators, it suffices to consider the $A$-twist. All non-trivial operators satisfying $j_A = 0$ are realizable as normally-ordered products of $\phi$ and $c$; the action of $D_A$ on these generators is
	\be
		D_A c = \phi \qquad D_A \phi = 0
	\ee
	from which it follows that $D_A$-cohomology is generated by $1$, exactly matching $A_C[FV^\text{pert}]$.
\end{proof}

More generally, it is not clear what the necessary and sufficient conditions are for the maps in Proposition \ref{prop:naturalmap} to be isomorphisms.

\bibliography{ravioloHC}
\bibliographystyle{ieeetr}

\end{document}